\def\thefigure{\thesection.\@arabic\c@figure}
\def\fps@figure{h,t}
\def\thetable{\thesection.\@arabic\c@table}
\def\fps@table{h, t}
\newcommand\@dotsep{4.5}
\def\@tocline#1#2#3#4#5#6#7{\relax
	\ifnum #1>\c@tocdepth 
	\else
	\par \addpenalty\@secpenalty\addvspace{#2}%
	\begingroup \hyphenpenalty\@M
	\@ifempty{#4}{%
		\@tempdima\csname r@tocindent\number#1\endcsname\relax
	}{%
		\@tempdima#4\relax
	}%
	\parindent\z@ \leftskip#3\relax \advance\leftskip\@tempdima\relax
	\rightskip\@pnumwidth plus1em \parfillskip-\@pnumwidth
	#5\leavevmode\hskip-\@tempdima #6\relax
	\leaders\hbox{$\m@th
		\mkern \@dotsep mu\hbox{.}\mkern \@dotsep mu$}\hfill
	\hbox to\@pnumwidth{\@tocpagenum{#7}}\par
	\nobreak
	\endgroup
	\fi}
\newtheorem{theorem}{Theorem}
\newtheorem{corollary}[theorem]{Corollary}
\newtheorem{definition}[theorem]{Definition}
\newtheorem{example}[theorem]{Example}
\newtheorem{lemma}[theorem]{Lemma}
\newtheorem{proposition}[theorem]{Proposition}
\newtheorem{remark}[theorem]{Remark}
\numberwithin{theorem}{section}
\numberwithin{equation}{section}
\title[Injective envelopes for locally $C^{\ast}$-algebras]{Injective
envelopes for locally $C^{\ast}$-algebras}
\author[1]{Maria Joi\c ta$^\ast$}
\address{Department of Mathematics, Faculty of Applied Sciences, University
Politehnica of Bucharest, 313 Spl. Independentei, 060042, Bucharest, Romania
and Department of Mathematics, Faculty of Mathematics and Computer Science,
University of Bucharest, Str. Academiei nr. 14, Bucharest, Romania}
\email{maria.joita@upb.ro and mjoita@fmi.unibuc.ro }
\thanks{$^\ast$Corresponding author:  \texttt{maria.joita@upb.ro}}
\urladdr{http://sites.google.com/a/g.unibuc.ro/maria-joita}
\author[2]{Gheorghe-Ionu\c{t} \c{S}imon}
\address{Department of Mathematics, Faculty of Applied Sciences, University
Politehnica of Bucharest, 313 Spl. Independentei, 060042, Bucharest, Romania 
}
\email{ionutsimon.gh@gmail.com}
\subjclass[2020]{46L05; 46L07; 46L10; 47L25}
\keywords{injective locally $C^{\ast}$-algebras, local completely positive
maps, quantized domain, injective envelope}
\begin{document}

\begin{abstract}
We introduce the notion of admissible injective envelope for a locally $C^{\ast}$-algebra and show that each object in the category whose objects are unital Fr\'{e}chet locally $C^{\ast}$-algebras and whose morphisms are unital admissible local completely positive maps has a unique admissible injective envelope. The concept of admissible injectivity is stronger than that of injectivity. As a consequence, we show that a unital Fr\'{e}chet locally $W^*$-algebra is injective if and only if the $W^*$-algebras from its Arens-Michael decomposition are injective.
\end{abstract}

\maketitle

\section{Introduction}

Injectivity is a categorical concept. An object $I$ in a category $\mathbf{C}$ is injective if for any two objects $E\subseteq F$ from $%
\mathbf{C}$, any morphism $\varphi :E\rightarrow I$ extends to a
morphism $\widetilde{\varphi}:F\rightarrow I$. Cohen \cite{C} considered
the category whose objects are Banach spaces and whose morphisms are
contractive linear maps. He introduced the notion of injective envelope for
a Banach space and showed that each Banach space has a unique injective
envelope. Hamana \cite{H} proved a $C^{\ast}$-algebraic version of these
results. He considered the category whose objects are unital $C^{\ast}$-algebras and whose morphisms are unital completely positive linear maps. A
unital $C^{\ast}$-algebra $\mathcal{A}$ is injective if for any unital $
C^{\ast}$-algebra $\mathcal{C}$ and a self-adjoint subspace $\mathcal{S}$ of $
\mathcal{C}$ containing the unit, any unital completely positive linear
map $\varphi :\mathcal{S}\rightarrow \mathcal{A}$ extends to a unital
completely positive linear map $\widetilde{\varphi }:\mathcal{C}\rightarrow 
\mathcal{A}$. By Arveson's extension theorem, the $C^{\ast}$-algebra $B(\mathcal{H})$ of all bounded linear operator on a Hilbert space $\mathcal{H}$
is injective. An extension of $\mathcal{A}$ is a pair $(\mathcal{B},\Phi )$
of a unital $C^{\ast}$-algebra $\mathcal{B}$ and a $\ast $-monomorphism $
\Phi :\mathcal{A}\rightarrow \mathcal{B}$. It is injective if $\mathcal{B}$
is injective. According to the Gelfand-Naimark theorem, for any unital $
C^{\ast}$-algebra $\mathcal{A}$ there exist a Hilbert space $\mathcal{H}$
and an isometric $\ast $-morphism $\Phi :\mathcal{A}\rightarrow B\left( 
\mathcal{H}\right) $, and so, each unital $C^{\ast }$-algebra $\mathcal{A}$
has an injective extension. An injective envelope for $\mathcal{A}$ is an
injective extension $(\mathcal{B},\Phi )$ with the property that $ id_{
\mathcal{B}}$ is the unique unital completely positive linear map which
fixes the elements of $\Phi \left( \mathcal{A}\right) $. He showed that any
unital $C^{\ast}$-algebra $\mathcal{A}$ has a unique injective envelope in
the sense that if $(\mathcal{B}_{1},\Phi _{1})$ and $(\mathcal{B}_{2},\Phi
_{2})$ are two injective envelopes for $\mathcal{A}$, there exists a unique $
\ast $-isomorphism $\Psi :\mathcal{B}_{1}\rightarrow \mathcal{B}_{2}$ such
that $\Psi \circ \Phi _{1}=\Phi _{2}$.

In this paper, we propose to extend the Hamana's results in the context of
locally $C^{\ast}$-algebras. In the literature, the locally $C^{\ast }$-algebras are studied under different names like pro-$C^{\ast}$-algebras
(D. Voiculescu \cite{V}, N.C. Philips \cite{P}), $LMC^{\ast}$-algebras (K.
Schm\"{u}dgen \cite{S}), $b^{\ast}$-algebras (C. Apostol \cite{A}) and
multinormed $C^{\ast}$-algebras (A. Dosiev \cite{D}). The term locally $
C^{\ast }$-algebra is due to A. Inoue \cite{I}. A locally $C^{\ast}$
-algebra is a complete Hausdorff complex topological $\ast $-algebra $
\mathcal{A}$ whose topology is determined by an upward filtered family of $
C^{\ast}$-seminorms $\lbrace p_{\lambda }\rbrace_{\lambda \in \Lambda }$. A Fr\'{e}chet locally $C^{\ast}$-algebra is a locally $C^{\ast}$-algebra whose
topology is determined by a countable family of $C^{\ast}$-seminorms. An
element $a\in \mathcal{A}$ is called \textit{local positive} if $a=b^{\ast
}b+c,$ where $b,c\in \mathcal{A}$ such that $p_{\lambda }\left( c \right)
=0 $ for some $\lambda \in \Lambda $. In this case, we say that $a$ is $\lambda $\textit{-positive}. A linear map $\varphi$ from a locally $C^{\ast}$-algebra $\left(
\mathcal{A},\lbrace p_{\lambda }\rbrace_{\lambda \in \Lambda }\right)$ to another locally $
C^{\ast}$-algebra $\left(\mathcal{B},\lbrace q_{\delta }\rbrace_{\delta \in \Delta }\right)$ is 
\textit{local positive} if for each $\delta \in \Delta ,$ there exists $
\lambda \in \Lambda $ such that $\varphi \left( a\right) $ is $\delta $
-positive whenever $a$ is $\lambda $-positive, and  $\delta $-null if $a$ is $\lambda $-null. It is \textit{local completely positive} if 
for each $\delta \in \Delta ,$ there exists $\lambda \in \Lambda $ such that 
$\left[ \varphi \left( a_{ij}\right) \right] _{i,j=1}^{n}$ is $\delta $
-positive, respectively $\delta $-null, in $M_{n}\left( \mathcal{B}\right) $
, the locally $C^{\ast}$-algebra of all matrices of size $n$ with elements
in $\mathcal{B}$, whenever $\left[ a_{ij}\right] _{i,j=1}^{n}$ is $\lambda $
-positive, respectively $\lambda $-null, in $M_{n}\left( \mathcal{A}\right) $
for all positive integers $n$.

A unital locally $C^{\ast}$-algebra $\mathcal{A}$ is \textit{injective} if
for any unital \textcolor{blue}{locally} $C^{\ast}$-algebra $\mathcal{C}$ and self-adjoint subspace $
\mathcal{S}$ of $\mathcal{C} $ containing the unit, any unital local
completely positive linear map $\varphi :\mathcal{S}\rightarrow \mathcal{A}$
extends to a unital local completely positive linear map $\widetilde{\varphi 
}:\mathcal{C}\rightarrow \mathcal{A}$.

Let $(\Delta ,\leq )$ be a directed poset. A \textit{quantized domain} in a
Hilbert space $\mathcal{H}$ is a triple $\lbrace\mathcal{H};\mathcal{E};\mathcal{D
}_{\mathcal{E}}\rbrace$, where $\mathcal{E}=\lbrace\mathcal{H}_{\delta }: \delta \in
\Delta \rbrace$ is an upward filtered family of closed subspaces with dense union 
$\mathcal{D}_{\mathcal{E}}=\bigcup\limits_{\delta \in \Delta }\mathcal{H}
_{\delta}$ in $\mathcal{H}$. If $\Delta $ is countable, we say that $\lbrace
\mathcal{H};\mathcal{E};\mathcal{D}_{\mathcal{E}}\rbrace$ is a Fr\'{e}chet
quantized domain in $\mathcal{H}$. The collection of all linear operators $T:
\mathcal{D}_{\mathcal{E}}\rightarrow \mathcal{D}_{\mathcal{E}}$ such that $T(
\mathcal{H}_{\delta })\subseteq \mathcal{H}_{\mathcal{\delta }},T(\mathcal{H}
_{\delta }^{\bot }\cap \mathcal{D}_{\mathcal{E}})\subseteq \mathcal{H}
_{\delta }^{\bot }\cap \mathcal{D}_{\mathcal{E}}$ and $ T\restriction _{
\mathcal{H}_{\delta }}\in B(\mathcal{H}_{\delta })$ for all $\delta \in
\Delta ,$ denoted by $C^{\ast}(\mathcal{D}_{\mathcal{E}}),$ is a locally $
C^{\ast}$-algebra with the involution given by $T^{\ast }= T^{\bigstar }\restriction_{\mathcal{D}_{\mathcal{E}}}$, where $T^{\bigstar}$
is the adjoint of the unbounded linear operator $T$, and the topology given
by the family of $C^{\ast}$-seminorms $\lbrace\left\Vert \cdot \right\Vert
_{\delta }\rbrace_{\delta \in \Delta }$, where $\left\Vert T\right\Vert_{\delta
}=\left\Vert  T\restriction_{\mathcal{H}_{\delta }}\right\Vert _{B(
\mathcal{H}_{\delta })}$. For every locally $C^{\ast}$-algebra $\mathcal{A}$
whose topology is defined by the family of $C^{\ast }$-seminorms $
\lbrace p_{\lambda }\rbrace_{\lambda \in \Lambda }$, there exists a quantized
domain $\lbrace\mathcal{H};\mathcal{E}=\lbrace \mathcal{H}_{\lambda }\rbrace_{\lambda \in
\Lambda };\mathcal{D}_{\mathcal{E}}\rbrace$ and a local isometric $\ast $
-morphism $\pi :A\mathcal{\rightarrow }C^{\ast}(\mathcal{D}_{\mathcal{E}})$
, that is a $\ast $-morphism such that $\left\Vert \pi \left( a\right)
\right\Vert _{\lambda}=p_{\lambda }\left( a\right) \ $ for all $a\in A$ and
for all $\lambda \in \Lambda $. Therefore, a locally $C^{\ast}$-algebra can
be identified with a $\ast $-subalgebra of unbounded linear operators on a
Hilbert space. In the local convex theory, the locally $C^{\ast}$-algebra $
C^{\ast }(\mathcal{D}_{\mathcal{E}})$ plays the role of $B(\mathcal{H})$ in a certain sense. Dosiev \cite{D} proved a local convex version of
Arveson's extension theorem in the case of unital Fr\'{e}chet locally $
C^{\ast}$-algebras, and showed that if $\lbrace \mathcal{H};\mathcal{E}=\lbrace\mathcal{
H}\rbrace_{n\geq 1};\mathcal{D}_{\mathcal{E}}\rbrace$ is a Fr\'{e}chet
quantized domain in $\mathcal{H}$, then $C^{\ast}(\mathcal{D}_{\mathcal{E}
}) $ is injective. 

In \cite{DD}, Dosiev considers the category of local operator spaces and local
completely contractive maps. He investigates the connection between the
injectivity in this category and the injectivity in the normed case and
shows that the injectivity of a locally $C^{\ast }$-algebra $\left( \mathcal{A},\left\lbrace p_{\lambda} \right\rbrace _{\lambda \in \Lambda }\right) $
implies the injectivity of the $C^{\ast }$-algebra $b(\mathcal{A})$ of all
its bounded elements (i.e. $a\in \mathcal{A}$ is bounded if $\sup\lbrace p_{\lambda}(a)  
:\lambda \in \Lambda \rbrace <\infty $) \cite[Proposition 3.1]{DD}. In general, the converse implication is not true. He proves that in
the case of Fr\'{e}chet locally $W^{\ast }$-algebras (i.e. a Fr\'{e}chet locally $
W^{\ast }$-algebra is an inverse limit of a countable inverse system of $
W^{\ast }$-algebras whose connecting maps are $W^{\ast }$-morphisms), $
\mathcal{A}$ is injective if and only if $b(\mathcal{A})$ is injective \cite[Theorem 4.1]{DD}.
Also, he introduces the notions of $\mathcal{R}$-injectivity and injective $\mathcal{R}$-envelope for a local operator space. The notion of $\mathcal{R}$-injectivity is stronger than the notion of injectivity. In the case of Fr\'{e}chet locally $W^*$-algebras, these two notions coincide.

In this paper, we consider the category whose objects are unital Fr\'{e}chet locally $C^{\ast}$-algebras and whose morphisms are unital admissible local completely positive maps. An injective object in this category is called admissible injective. A linear map $\varphi$ from a locally $C^*$-algebra $\left( \mathcal{A}, \lbrace p_{\lambda} \rbrace_{\lambda\in\Lambda} \right)$ to another locally $C^*$-algebra $\left( \mathcal{B}, \lbrace q_{\lambda} \rbrace_{\lambda\in\Lambda} \right)$ is \textit{admissible local completely positive} if for each $\lambda\in\lambda$, $[\varphi(a_{ij})]_{i,j=1}^{n}$ is $\lambda$-positive, respectively $\lambda$-null in $M_{n}(\mathcal{B})$ whenever $[a_{ij}]_{i,j=1}^{n}$ is $\lambda$-positive, respectively $\lambda$-null, in $M_{n}(\mathcal{A})$ for all positive integer $n$. The notion of admissible injectivity is stronger than the notion of injectivity,but it is weaker than the notion of $\mathcal{R}$-injectivity. In the case of Fr\'{e}chet locally $W^*$-algebras, these two notions coincide with the notion of injectivity.

Following Hamana \cite{H}, we show that any unital Fr\'{e}chet locally $C^*$-algebra has an admissible injective envelope which is unique up to a local isometric $*$-isomorphism (Theorem \ref{p}). First, we introduce the notions of the family of $\mathcal{B}$-seminorms, respectively admissible $\mathcal{B}$-projections, on a unital locally $C^*$-algebra $\mathcal{A}$ which contains $\mathcal{B}$ as a unital locally $C^*$-subalgebra. We prove the existence of a minimal family of $\mathcal{B}$-seminorms on a unital Fr\'{e}chet locally $C^*$-algebra $\mathcal{A}$ (Theorem \ref{Help}). Then we show that the admissible injective envelope of a unital Fr\'{e}chet locally $C^*$-algebra $\mathcal{A}$ is the range of an admissible $\mathcal{A}$-projection. Finally, in Section 6, we show that the admissible injective envelope of a unital Fr\'{e}chet locally $C^*$-algebra can be identified with the inverse limit of the injective envelopes for its Arens-Michael decomposition, and a unital Fr\'{e}chet locally $W^*$-algebra is injective if and only if it is an inverse limit of injective $W^*$-algebras.

\section{Preliminaries}

\subsection{Locally $C^{\ast }$-algebras}

Let $\mathcal{A}$ be a $\ast $-algebra with unit, denoted by $1_{\mathcal{A}
} $. A seminorm $p$ on $\mathcal{A}$ is called \textit{sub-multiplicative}
if $p(1_{\mathcal{A}})=1$ and $p(ab)\leq p(a)p(b)$ for all $a,b\in \mathcal{A
}$. A sub-multiplicative seminorm $p$ on $\mathcal{A}$ is called a \textit{$
C^{\ast}$-seminorm} if $p(a^{\ast }a)=p(a)^{2}$ for all $a\in \mathcal{A}.$

Let $\left( \Lambda ,\leq \right) $ be a directed poset and let $\lbrace
p_{\lambda }\rbrace_{\lambda \in \Lambda }$ be a family of $C^{\ast}$
-seminorms defined on some $\ast $-algebra $\mathcal{A}$. We say that $
\lbrace p_{\lambda }\rbrace_{\lambda \in \Lambda }$ is an \textit{upward
filtered family} of $C^{\ast}$-seminorms if $p_{\lambda _{1}}(a)\leq
p_{\lambda _{2}}(a)\ $ for all $\ a\in \mathcal{A}\ $ whenever $\ \lambda
_{1}\leq \lambda _{2}\ $in$\ \Lambda $. A \textit{locally }$C^{\ast }$
\textit{-algebra} is a complete Hausdorff complex topological $\ast $
-algebra $\mathcal{A}$ whose topology is determined by an upward filtered
family of $C^{\ast}$-seminorms $\lbrace p_{\lambda }\rbrace_{\lambda \in \Lambda}$. A
metrizable locally $C^{\ast}$-algebra is called a \textit{Fr\'{e}chet} 
\textit{locally $C^{\ast}$-algebra.}
Furthermore, note that any $C^*$-algebra may be regarded as locally $C^*$-algebra.

An element $a\in \mathcal{A}$ is \textit{bounded} if $\sup \lbrace p_{\lambda
}\left( a\right) :\lambda \in \Lambda \rbrace <\infty $. Then $b\left( \mathcal{A}
\right) :=\left\lbrace a\in \mathcal{A}: \left\Vert a\right\Vert _{\infty }:=\sup
\lbrace p_{\lambda }\left( a\right) :\lambda \in \Lambda \rbrace <\infty \right\rbrace $ is a $
C^{\ast}$-algebra with respect to the $C^{\ast}$-norm $\left\Vert \cdot
\right\Vert _{\infty}$. Moreover, $b\left( \mathcal{A}\right) $ is dense in 
$\mathcal{A}$ \cite[Proposition 1.11]{P}.

We see that a locally $C^{\ast}$-algebra $\mathcal{A}$ can be
realized as a projective limit of an inverse system of $C^{\ast}$-algebras
as follows: For each $\lambda \in \Lambda $, let $\mathcal{I}_{\lambda
}:=\lbrace a\in \mathcal{A}: p_{\lambda }(a)=0\rbrace $. Clearly, $\mathcal{I
}_{\lambda }$ is a closed two-sided $\ast $-ideal in $\mathcal{A}$ and $
\mathcal{A}_{\lambda }:=\mathcal{A}/\mathcal{I}_{\lambda }$ is a $C^{\ast}$
-algebra with respect to the $C^{\ast}$-norm $\left\Vert \cdot \right\Vert
_{\mathcal{A}_{\lambda }}$ induced by $p_{\lambda }$ (see \cite{A}). The
canonical quotient $\ast $-morphism from $\mathcal{A}$ to $\mathcal{A}
_{\lambda }$ is denoted by $\pi _{\lambda }^{\mathcal{A}}$. For each $
\lambda _{1},\lambda _{2}\in \Lambda $ with $\lambda _{1}\leq \lambda _{2}$,
there is a canonical surjective $\ast $-morphism $\pi _{\lambda _{2}\lambda
_{1}}^{\mathcal{A}}:\mathcal{A}_{\lambda _{2}}\rightarrow \mathcal{A}
_{\lambda _{1}}$ defined by $\pi _{\lambda _{2}\lambda _{1}}^{\mathcal{A}
}\left( a+\mathcal{I}_{\lambda _{2}}\right) =a+\mathcal{I}_{\lambda _{1}}$.
Then $\left\lbrace \mathcal{A}_{\lambda },\pi _{\lambda _{2}\lambda _{1}}^{
\mathcal{A}},\lambda _{1}\leq \lambda _{2},\ \lambda _{1},\lambda _{2}\in
\Lambda \right\rbrace $ forms an inverse system of $C^{\ast }$-algebras, because $
\pi _{\lambda _{1}}^{\mathcal{A}}=\pi _{\lambda _{2}\lambda _{1}}^{\mathcal{A
}}\circ \pi _{\lambda _{2}}^{\mathcal{A}}$ whenever $\lambda _{1}\leq
\lambda _{2}$. The projective limit
\begin{equation*}
\varprojlim\limits_{\lambda }\mathcal{A_{\lambda }}:=\left\lbrace \lbrace a_{\lambda
}\rbrace_{\lambda \in \Lambda }\in \prod_{\lambda \in \Lambda }\mathcal{
A_{\lambda }}: \pi _{\lambda _{2}\lambda _{1}}^{\mathcal{A}}(a_{\lambda
_{2}})=a_{\lambda _{1}}\text{ whenever}\ \lambda _{1}\leq \lambda
_{2}, \lambda _{1}, \lambda _{2}\in \Lambda \right\rbrace
\end{equation*}
of the inverse system of $C^{\ast}$-algebras $\left\lbrace \mathcal{A}_{\lambda
},\pi _{\lambda _{2}\lambda _{1}}^{\mathcal{A}},\lambda _{1}\leq \lambda
_{2},\ \lambda _{1},\lambda _{2}\in \Lambda \right\rbrace $ is a locally $C^{\ast
}$-algebra that may be identified with $\mathcal{A}$ by the map $a\mapsto
\left( \pi _{\lambda }^{\mathcal{A}}(a)\right) _{\lambda \in \Lambda }$. The
above relation is known as the Arens-Michael decomposition of $\mathcal{A}$
\cite[pg. 16]{Fr}$.$

Let $\mathcal{A}$ and $\mathcal{B}$ be two locally $C^{\ast}$-algebras
whose topologies are given by the families of $C^{\ast}$-seminorms $\left\lbrace
p_{\lambda }\right\rbrace _{\lambda \in \Lambda }$ and $\left\lbrace
q_{\delta }\right\rbrace_{\delta \in \Delta }$, respectively. A \textit{local contractive} $
\ast $\textit{-morphism} from $\mathcal{A}$ to $\mathcal{B}$ is a $\ast $-morphism $\pi :\mathcal{A}\rightarrow \mathcal{B}$ with the property that
for each $\delta \in \Delta ,$ there exists $\lambda \in \Lambda $ such that 
$q_{\delta }\left( \pi \left( a\right) \right) \leq p_{\lambda }\left(
a\right) $ for all $a\in \mathcal{A}$. If $\pi :\mathcal{A}\rightarrow 
\mathcal{B}$ is a $\ast $-morphism, $\Delta =\Lambda $ and $q_{\lambda
}\left( \pi \left( a\right) \right) = p_{\lambda }\left( a\right) $ for all $
a\in \mathcal{A}$ and $\lambda \in \Lambda ,$ we say that $\pi $ is a \textit{local isometric }$\ast $\textit{-morphism} from $\mathcal{A}$ to $\mathcal{B}$.

\begin{remark}
If $\pi :\mathcal{A}\rightarrow \mathcal{B}$ is a local isometric $\ast $-morphism, then $\pi \left( \mathcal{A}\right) $, the image of $\pi 
$, is a locally $C^{\ast}$-subalgebra of $\mathcal{B}$ and $\pi ^{-1}:\pi
\left( \mathcal{A}\right) \rightarrow \mathcal{A}$ is a local isometric $
\ast $\textit{-morphism.}
\end{remark}

\subsection{Positive and local positive elements}

Let $\mathcal{A}$ be a locally $C^{\ast}$-algebra whose topology is defined
by the family of $C^{\ast}$-seminorms $\lbrace p_{\lambda }\rbrace_{\lambda \in
\Lambda }$. An element $a\in \mathcal{A}$ is \textit{self-adjoint} if $
a=a^{\ast}$ and it is \textit{positive} if $a=b^{\ast}b$ for some $b\in 
\mathcal{A}.$\ 

An element $a\in \mathcal{A}$ is called \textit{local self-adjoint} if $
a=a^{\ast}+c$ for some $c\in \mathcal{A}$ with $p_{\lambda }\left( c\right)
=0$ for some $\lambda \in \Lambda ,$ and we call $a$ as $\lambda $
-self-adjoint, and \textit{local positive} if $a=b^{\ast}b+c$ where $b,c\in 
\mathcal{A}$ and $p_{\lambda }\left( c\right) =0$ for some $\lambda \in
\Lambda $; we call $a$ as $\lambda $-positive and write $a\geq _{\lambda }0$
. We write $a=_{\lambda }0$ whenever $p_{\lambda }\left( a\right) =0$.

\begin{remark}
An element $a\in \mathcal{A}$ is local self-adjoint if and only if there is $
\lambda \in \Lambda $ such that $\pi _{\lambda }^{\mathcal{A}}\left(
a\right) $ is self-adjoint in $\mathcal{A}_{\lambda }$ and $a\in \mathcal{A}$
is local positive if and only if there is $\lambda \in \Lambda $ such that $
\pi _{\lambda }^{\mathcal{A}}\left( a\right) $ is positive in $\mathcal{A}
_{\lambda }$.
\end{remark}

Note that an element $a\in \mathcal{A}$ is self-adjoint if and only if $a$ is 
$\lambda $-self-adjoint for all $\lambda \in \Lambda \ $and $a$ is positive
if and only if $a$ is $\lambda $-positive for all $\lambda \in \Lambda .$

\subsection{Local completely positive maps}

Let $\mathcal{A}$ and  $\mathcal{B}$ be two locally $C^{\ast}$-algebras
whose topologies are defined by the families of $C^{\ast}$-seminorms $
\lbrace p_{\lambda }\rbrace_{\lambda \in \Lambda }$ and $\lbrace q_{\delta
}\rbrace_{\delta \in \Delta }$, respectively. For each positive integer $n, \ M_{n}(\mathcal{A})$
denotes the collection of all matrices of order $n$ with elements in $
\mathcal{A}$. Note that $M_{n}(\mathcal{A})$ is a locally $C^{\ast}$
-algebra with respect to the family of $C^{\ast}$-seminorms $\lbrace p_{\lambda
}^{n}\rbrace_{\lambda \in \Lambda }$, where $p_{\lambda }^{n}\left(
[a_{ij}]_{i,j=1}^{n}\right) = \left\Vert [\pi _{\lambda }^{\mathcal{A}
}\left( a_{ij}\right) ]_{i,j=1}^{n}\right\Vert _{M_{n}(\mathcal{A}_{\lambda
})}$ for all $\lambda \in \Lambda .$

For each positive integer $n$, the $n$-amplification of a linear map $
\varphi :\mathcal{A}\rightarrow \mathcal{B}$ is the map $\varphi ^{\left(
n\right) }:M_{n}(\mathcal{A})\rightarrow M_{n}(\mathcal{B})$ defined by $
\varphi ^{\left( n\right) }\left( [a_{ij}]_{i,j=1}^{n}\right) =[\varphi
\left( a_{ij}\right) ]_{i,j=1}^{n}.$

A linear map $\varphi :\mathcal{A}\rightarrow \mathcal{B}$ is called 

\begin{enumerate}
\item \textit{positive} if $\varphi \left( a\right) \geq 0$ whenever $
a\geq 0$ for all $a\in \mathcal{A}.$

\item \textit{local positive} if for each $\delta \in \Delta $, there exists 
$\lambda \in \Lambda $ such that $\varphi \left( a\right) \geq _{\delta }0 $
whenever $a\geq _{\lambda }0$ and $\varphi \left( a\right) =_{\delta }0$
whenever $a=_{\lambda }0.$

\item \textit{completely positive} if $\varphi ^{\left( n\right) }\left( 
\left[ a_{ij}\right] _{i,j=1}^{n}\right) $\ $\geq 0$ whenever $\left[ a_{ij}
\right] _{i,j=1}^{n}\geq 0$ for all $n\geq 1$.

\item \textit{local completely positive (local $\mathcal{CP}$)} if
for each $\delta \in \Delta $, there exists $\lambda \in \Lambda $ such that 
$\varphi ^{\left( n\right) }\left( \left[ a_{ij}\right] _{i,j=1}^{n}\right) $
\ $\geq _{\delta }0\ $ whenever $\left[ a_{ij}\right] _{i,j=1}^{n}\geq
_{\lambda }0$ and $\varphi ^{\left( n\right) }\left( \left[ a_{ij}\right]
_{i,j=1}^{n}\right) =_{\delta }0$ whenever $\left[ a_{ij}\right]
_{i,j=1}^{n}=_{\lambda }0$ for all $n\geq 1$.

\item \textit{admissible local completely positive (admissible local $
\mathcal{CP}$)} if $\Delta =\Lambda $, and for each $\lambda \in
\Lambda ,$ $\varphi ^{\left( n\right) }\left( \left[ a_{ij}\right]
_{i,j=1}^{n}\right) $\ $\geq _{\lambda }0\ $ whenever $\left[ a_{ij}\right]
_{i,j=1}^{n}\geq _{\lambda }0$ and $\varphi ^{\left( n\right) }\left( \left[
a_{ij}\right] _{i,j=1}^{n}\right) =_{\lambda }0$ whenever $\left[ a_{ij}
\right] _{i,j=1}^{n}=_{\lambda }0$ for all $n\geq 1$.
\end{enumerate}

Note that any local contractive $\ast $-morphism $\pi :\mathcal{A}
\rightarrow \mathcal{B}$ is a local completely positive map. It is known
that the positivity property of the linear maps between $C^{\ast}$-algebras
implies their continuity. This property is not true in the case of positive
linear maps between locally $C^{\ast}$-algebras, but it is true in the case
of local positive linear maps \cite[Proposition 3.1]{J}. Note that a linear
map $\varphi :\mathcal{A}\rightarrow \mathcal{B}$ is local completely
positive if and only if it is continuous and completely positive \cite[
Proposition 3.3]{J}. If $:\mathcal{A}\rightarrow \mathcal{B}$ is local
completely positive, then $\varphi \left( a^{\ast }\right) =\varphi \left(
a\right) ^{\ast }$ for all $a\in \mathcal{A}$.

\begin{remark}\label{a}
If $\varphi :\mathcal{A}\rightarrow \mathcal{B}$ is an admissible
local $\mathcal{CP}$-map, then for each $\lambda \in \Lambda $, there exists
a $\mathcal{CP}$-map $\varphi _{\lambda }:\mathcal{A}_{\lambda }\rightarrow 
\mathcal{B}_{\lambda }$ such that $\varphi _{\lambda }\circ \pi _{\lambda }^{
\mathcal{A}}=\pi _{\lambda }^{\mathcal{B}}\circ \varphi $. Moreover, $\left(
\varphi _{\lambda }\right) _{\lambda \in \Lambda }$ is an inverse system of
completely positive maps and $\varphi =\varprojlim\limits_{\lambda}\varphi _{\lambda }.$
\end{remark}

\begin{lemma}\label{U}
Let $\varphi :\mathcal{A}\rightarrow \mathcal{B}$ be a bijective
unital linear map. If $\varphi $ and $\varphi ^{-1}$ are unital admissible
local $\mathcal{CP}$-maps, then  $\varphi $ is a local isometric $\ast $
-isomorphism.
\end{lemma}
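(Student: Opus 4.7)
The plan is to reduce to the $C^{\ast}$-algebra case via the Arens--Michael decomposition and then invoke the standard fact that a bijective unital completely positive map between unital $C^{\ast}$-algebras with completely positive inverse is a $\ast$-isomorphism. Since the admissibility condition requires $\Delta = \Lambda$ and is ``index-preserving'', the quotient construction from Remark \ref{a} will work simultaneously on $\varphi$ and $\varphi^{-1}$.

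First, I would apply Remark \ref{a} to both $\varphi$ and $\varphi^{-1}$. This yields, for each $\lambda \in \Lambda$, unital $\mathcal{CP}$-maps $\varphi_{\lambda}:\mathcal{A}_{\lambda}\to\mathcal{B}_{\lambda}$ and $\psi_{\lambda}:\mathcal{B}_{\lambda}\to\mathcal{A}_{\lambda}$ satisfying $\varphi_{\lambda}\circ\pi_{\lambda}^{\mathcal{A}}=\pi_{\lambda}^{\mathcal{B}}\circ\varphi$ and $\psi_{\lambda}\circ\pi_{\lambda}^{\mathcal{B}}=\pi_{\lambda}^{\mathcal{A}}\circ\varphi^{-1}$. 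A direct computation using $\varphi\circ\varphi^{-1}=\mathrm{id}_{\mathcal{B}}$ and $\varphi^{-1}\circ\varphi=\mathrm{id}_{\mathcal{A}}$ shows that $\varphi_{\lambda}$ and $\psi_{\lambda}$ are mutually inverse. Hence each $\varphi_{\lambda}$ is a bijective unital $\mathcal{CP}$-map between unital $C^{\ast}$-algebras whose inverse is also unital $\mathcal{CP}$.

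Next I would invoke the classical result (a consequence of the Schwarz inequality for $2$-positive maps; see, e.g., Paulsen's book \textit{Completely Bounded Maps and Operator Algebras}) that such a map is automatically a $\ast$-isomorphism of $C^{\ast}$-algebras, hence an isometry. Thus for every $\lambda\in\Lambda$, $\varphi_{\lambda}$ is an isometric $\ast$-isomorphism from $\mathcal{A}_{\lambda}$ onto $\mathcal{B}_{\lambda}$.

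Finally, I would lift these properties back to $\varphi$. For isometry, for each $a\in\mathcal{A}$ and $\lambda\in\Lambda$,
\[
q_{\lambda}(\varphi(a)) = \bigl\|\pi_{\lambda}^{\mathcal{B}}(\varphi(a))\bigr\| = \bigl\|\varphi_{\lambda}(\pi_{\lambda}^{\mathcal{A}}(a))\bigr\| = \bigl\|\pi_{\lambda}^{\mathcal{A}}(a)\bigr\| = p_{\lambda}(a).
\]
For multiplicativity and the $\ast$-property, the identities $\pi_{\lambda}^{\mathcal{B}}(\varphi(ab))=\pi_{\lambda}^{\mathcal{B}}(\varphi(a)\varphi(b))$ and $\pi_{\lambda}^{\mathcal{B}}(\varphi(a^{\ast}))=\pi_{\lambda}^{\mathcal{B}}(\varphi(a)^{\ast})$ hold for all $\lambda$ because $\varphi_{\lambda}$ is a $\ast$-morphism, and since $\bigcap_{\lambda}\ker\pi_{\lambda}^{\mathcal{B}}=\{0\}$ by Hausdorffness, this forces $\varphi(ab)=\varphi(a)\varphi(b)$ and $\varphi(a^{\ast})=\varphi(a)^{\ast}$. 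Combined with bijectivity, this makes $\varphi$ a local isometric $\ast$-isomorphism. The only potential obstacle is the passage through the quotient system, but the admissibility hypothesis ($\Delta=\Lambda$ and seminorm-index preservation) is precisely what guarantees that the construction is compatible on each slice and that Hausdorffness can be used to collapse the pointwise identities to global ones.
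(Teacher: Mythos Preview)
Your proof is correct and follows essentially the same route as the paper: reduce via Remark~\ref{a} to the quotient $C^{\ast}$-algebras, invoke the classical fact that a bijective unital $\mathcal{CP}$-map with $\mathcal{CP}$ inverse is a $\ast$-isomorphism (the paper cites \cite[Lemma~2.7]{H} for this, you cite the Schwarz-inequality argument from Paulsen), and then lift multiplicativity and isometry back using Hausdorffness. The only cosmetic difference is that you spell out the isometry and $\ast$-preservation steps more explicitly than the paper does.
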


\begin{proof}
Since $\varphi $ and $\varphi ^{-1}$ are unital admissible local $\mathcal{CP
}$-maps, for each $\lambda \in \Lambda $, there exist $\varphi _{\lambda }:
\mathcal{A}_{\lambda }\rightarrow \mathcal{B}_{\lambda }$ and $\left(
\varphi ^{-1}\right) _{\lambda }:\mathcal{B}_{\lambda }\rightarrow \mathcal{A
}_{\lambda }$ such that $\varphi _{\lambda }\circ \pi _{\lambda }^{\mathcal{A
}}=\pi _{\lambda }^{\mathcal{B}}\circ \varphi $ and $\left( \varphi
^{-1}\right) _{\lambda }\circ \pi _{\lambda }^{\mathcal{B}}=\pi _{\lambda }^{
\mathcal{A}}\circ \varphi ^{-1}$, respectively. Moreover, $\varphi _{\lambda
}$ and $\left( \varphi ^{-1}\right) _{\lambda }$ are unital isometric $
\mathcal{CP}$-maps, $\left( \varphi _{\lambda }\right) ^{-1}=\left( \varphi
^{-1}\right) _{\lambda }$, and by \cite[Lemma 2.7]{H}, $\varphi _{\lambda }$
is an isometric $\ast $-isomorphism. Then, for each $a,b\in \mathcal{A},$ we
have 
\begin{equation*}
\pi _{\lambda }^{\mathcal{B}}\left( \varphi \left( ab\right) -\varphi \left(
a\right) \varphi \left( b\right) \right) =\varphi _{\lambda }\left( \pi
_{\lambda }^{\mathcal{A}}\left( ab\right) \right) -\varphi _{\lambda }\left(
\pi _{\lambda }^{\mathcal{A}}\left( a\right) \right) \varphi _{\lambda
}\left( \pi _{\lambda }^{\mathcal{A}}\left( b\right) \right) =0
\end{equation*}
for all  $\lambda \in \Lambda $, and so $\varphi \left( ab\right) =\varphi
\left( a\right) \varphi \left( b\right) $. Therefore, $\varphi $ is a local
isometric $\ast $- isomorphism.
\end{proof}

The following theorem is a local convex version of \cite[Theorem 2.1]{G}
(see also \cite{CS}).

\begin{theorem}
Let $\varphi :\mathcal{A}\rightarrow \mathcal{B}$ be a unital local $
\mathcal{CP}$-map. Then we have

\begin{enumerate}
\item (Schwarz Inequality) $\varphi \left( a\right) ^{\ast }\varphi \left(
a\right) $\ $\leq \varphi \left( a^{\ast }a\right) $ for all $a\in \mathcal{A
}$ \cite[Corollary 5.5]{D}.

\item Let $a\in \mathcal{A}$. Then

\begin{enumerate}
\item $\varphi \left( a\right) ^{\ast }\varphi \left( a\right)=\varphi
\left( a^{\ast }a\right)$ if and only if $\varphi \left( ba\right) =
\varphi \left( b\right) \varphi \left( a\right) $ for all $b\in \mathcal{A}$
\cite[Corollary 5.5]{D}.

\item $\varphi \left( a\right) \varphi \left( a\right) ^{\ast } = \varphi
\left( aa^{\ast }\right) $ if and only if $\varphi \left( ab\right) =
\varphi \left( a\right) \varphi \left( b\right) $ for all $b\in \mathcal{A}
\ $\ \cite[Corollary 5.5]{D}.
\end{enumerate}

\item $\mathcal{M}_{\varphi }=\left\lbrace a\in \mathcal{A}: \varphi \left( a\right)
^{\ast }\varphi \left( a\right) =\varphi \left( a^{\ast }a\right) \ and \ 
\varphi \left( a\right) \varphi \left( a\right) ^{\ast } =\varphi \left(
aa^{\ast }\right) \right\rbrace$ is a unital locally $C^{\ast}$-subalgebra of $
\mathcal{A}$ and it is the largest locally $C^{\ast }$-subalgebra $\mathcal{C
}$ of $\mathcal{A}$ such that $\varphi\restriction_{\mathcal{C}}:
\mathcal{C}\rightarrow \mathcal{B}$ is a unital local contractive $\ast $-morphism.
Moreover, $\varphi \left( bac\right) =\varphi \left( b\right) \varphi \left(
a\right) \varphi \left( c\right) $ for all $b,c\in \mathcal{M}_{\varphi }$
and for all $a\in \mathcal{A}$.
\end{enumerate}
\end{theorem}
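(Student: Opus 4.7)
The plan is to take parts (1) and (2) as given, since they are explicitly cited to \cite[Corollary 5.5]{D} in the statement, and to concentrate all the work on (3). Part (3) has three distinct assertions: $\mathcal{M}_\varphi$ is a unital locally $C^*$-subalgebra of $\mathcal{A}$; it is the largest locally $C^*$-subalgebra on which $\varphi$ restricts to a unital local contractive $*$-morphism; and the triple identity $\varphi(bac)=\varphi(b)\varphi(a)\varphi(c)$ for $b,c\in\mathcal{M}_\varphi$ and $a\in\mathcal{A}$.

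First I would verify that $\mathcal{M}_\varphi$ is a $*$-subalgebra. Self-adjointness under the involution is immediate: using the identity $\varphi(a^*)=\varphi(a)^*$ (which holds since $\varphi$ is local $\mathcal{CP}$), the two defining equations for $a$ swap under $a\mapsto a^*$, so $a\in\mathcal{M}_\varphi$ forces $a^*\in\mathcal{M}_\varphi$. Closure under addition follows by expanding $\varphi((a+b)^*(a+b))$ into four summands and applying (2a) and (2b) to the cross terms $\varphi(a^*b)=\varphi(a)^*\varphi(b)$ and $\varphi(b^*a)=\varphi(b)^*\varphi(a)$, which are legitimate because $a^*,b^*\in\mathcal{M}_\varphi$; the expression collapses to $\varphi(a+b)^*\varphi(a+b)$. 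Closure under products follows analogously by rewriting $\varphi((ab)^*(ab))=\varphi(b^*a^*ab)$ and iterating (2a)/(2b) at the factors $b,a^*,a$ until one arrives at $\varphi(ab)^*\varphi(ab)$. Closedness of $\mathcal{M}_\varphi$ in the topology of $\mathcal{A}$ is automatic, since $\varphi$ is continuous by \cite[Proposition 3.3]{J} and both sides of each defining equality depend continuously on the variable; the containment of $1_\mathcal{A}$ is trivial.

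For the maximality statement, one direction is immediate: if $\mathcal{C}$ is a locally $C^*$-subalgebra on which $\varphi$ is a $*$-morphism, then $\varphi(a^*a)=\varphi(a^*)\varphi(a)=\varphi(a)^*\varphi(a)$ and symmetrically, so $\mathcal{C}\subseteq\mathcal{M}_\varphi$. For the other direction I need that $\varphi|_{\mathcal{M}_\varphi}$ is itself a unital local contractive $*$-morphism. Multiplicativity on $\mathcal{M}_\varphi$ is precisely (2a) (or (2b)) restricted. Local contractivity I would obtain through the Arens-Michael decomposition: the local $\mathcal{CP}$ hypothesis supplies, for each $\delta\in\Delta$, an index $\lambda\in\Lambda$ such that $\varphi$ descends to a unital completely positive map $\varphi_{\delta\lambda}:\mathcal{A}_\lambda\to\mathcal{B}_\delta$ between $C^*$-algebras, which is contractive by the classical theory; pushing back gives $q_\delta(\varphi(a))\le p_\lambda(a)$ on all of $\mathcal{A}$, and in particular on $\mathcal{M}_\varphi$. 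Finally, the triple identity (iii) is a one-line consequence of the previous parts: $\varphi(bac)=\varphi(b)\varphi(ac)$ by (2b) applied to $b\in\mathcal{M}_\varphi$, and $\varphi(ac)=\varphi(a)\varphi(c)$ by (2a) applied to $c\in\mathcal{M}_\varphi$.

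I do not foresee any serious obstacle. The only place requiring care is the sum/product closure, where one must first establish self-adjointness of $\mathcal{M}_\varphi$ so that both (2a) and (2b) become available on the relevant transposed factors of each cross term; apart from that, all computations are purely algebraic. The local contractivity in the maximality step is the one place where the locally convex setting could in principle intervene, but reduction to the $C^*$-case via Arens-Michael dispatches it without effort.
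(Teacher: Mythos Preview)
The paper does not supply its own proof of this theorem. The result is presented as a local convex version of \cite[Theorem 2.1]{G} (see also \cite{CS}), with parts (1), (2a), (2b) explicitly attributed to \cite[Corollary 5.5]{D}; no argument for (3) is written out in the paper. So there is nothing to compare against directly.

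That said, your proposed proof of (3) is correct and is exactly the standard route one would take once (1) and (2) are in hand. A couple of minor comments. First, for closure under addition you only need that $a,b\in\mathcal{M}_\varphi$ themselves, not their adjoints: for the cross term $\varphi(a^{*}b)$ simply apply (2a) to $b$ (since $\varphi(b)^{*}\varphi(b)=\varphi(b^{*}b)$) to get $\varphi(a^{*}b)=\varphi(a^{*})\varphi(b)=\varphi(a)^{*}\varphi(b)$, and symmetrically for $\varphi(b^{*}a)$. Invoking self-adjointness of $\mathcal{M}_\varphi$ first is harmless but unnecessary there. Second, your local contractivity argument via the Arens--Michael quotients is fine and is in fact how one sees that any unital local $\mathcal{CP}$-map is automatically local contractive (cf.\ \cite[Proposition 3.1]{J}); you could also cite that directly rather than redo the descent. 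Otherwise the argument is complete.
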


\begin{remark}\label{R1}
Let $a,b\in \mathcal{A}$. If $\varphi :\mathcal{A}\rightarrow 
\mathcal{B}$ is an admissible local $\mathcal{CP}$-map, then
\begin{equation*}
p_{\lambda }\left( b\right) ^{2}\varphi \left( a^{\ast }a\right) \geq
_{\lambda }\varphi \left( a^{\ast }b^{\ast }ba\right)
\end{equation*}
for all $\lambda \in \Lambda $, since $p_{\lambda }\left( b\right)
^{2}a^{\ast }a\geq _{\lambda }a^{\ast }b^{\ast }ba$ and $\varphi $ is an
admissible local $\mathcal{CP}$-map.
\end{remark}

\begin{definition}
A unital local $\mathcal{CP}$-map $\varphi :\mathcal{A}\rightarrow \mathcal{A
}$ is a \textit{projection} if $\varphi \circ \varphi =\varphi $. An \textit{admissible projection} on $\mathcal{A}$ is a unital admissible
local $\mathcal{CP}$-map $\varphi :\mathcal{A}\rightarrow \mathcal{A}$ such that $\varphi \circ \varphi =\varphi $.
\end{definition}

\begin{remark}
If $\varphi :\mathcal{A}\rightarrow \mathcal{A}$ is a projection, then $Im \left( \varphi \right) =\lbrace b\in \mathcal{A}: \left( \exists
\right) a\in \mathcal{A}$ such that $b=\varphi \left( a\right) \rbrace$ is a
closed subspace of $\mathcal{A}$.
\end{remark}

\begin{lemma}\label{L1}\cite[Corollary 5.6]{D}
 Let $\varphi :\mathcal{A}\rightarrow 
\mathcal{A}$ be a projection. Then
\begin{equation*}
\varphi \left( \varphi \left( a\right) \varphi \left( b\right) \right)
=\varphi \left( \varphi \left( a\right) b\right) =\varphi \left( a \varphi
\left( b\right) \right)
\end{equation*}
for all $a,b\in \mathcal{A}$.
\end{lemma}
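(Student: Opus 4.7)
The plan is to reduce the three-term equality to a single bimodule vanishing statement, and then establish that vanishing via the Schwarz inequality on the $2\times 2$ amplification $\varphi^{(2)}$.

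For the reduction, the projection identity $\varphi\circ\varphi = \varphi$ gives $\varphi(b - \varphi(b)) = 0$ and $\varphi(a - \varphi(a)) = 0$. By linearity,
\[
\varphi(\varphi(a)\,b) - \varphi(\varphi(a)\varphi(b)) = \varphi(\varphi(a)(b - \varphi(b))),\qquad \varphi(a\,\varphi(b)) - \varphi(\varphi(a)\varphi(b)) = \varphi((a - \varphi(a))\varphi(b)).
\]
So the lemma is equivalent to the bimodule identity $\varphi(xc) = 0 = \varphi(cx)$ whenever $x \in \mathrm{Im}(\varphi)$ and $c \in \ker(\varphi)$.

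For the key vanishing, I apply the $2\times 2$ Schwarz inequality (valid since $\varphi^{(2)}$ is itself a local $\mathcal{CP}$-map on $M_{2}(\mathcal{A})$) to the matrix $M = \begin{pmatrix} 0 & x \\ 0 & c \end{pmatrix}$. Using $\varphi(x) = x$ and $\varphi(c) = 0$, the Schwarz inequality $\varphi^{(2)}(MM^{\ast}) \geq \varphi^{(2)}(M)\varphi^{(2)}(M)^{\ast}$ yields the locally positive block matrix
\[
\begin{pmatrix} \varphi(xx^{\ast})-xx^{\ast} & \varphi(xc^{\ast}) \\ \varphi(cx^{\ast}) & \varphi(cc^{\ast}) \end{pmatrix} \geq_{\lambda} 0 \qquad (\lambda \in \Lambda).
\]
The standard block--Cauchy--Schwarz bound on $2\times 2$ positive matrices then gives $p_{\lambda}(\varphi(xc^{\ast}))^{2} \leq p_{\lambda}(\varphi(xx^{\ast}) - xx^{\ast})\cdot p_{\lambda}(\varphi(cc^{\ast}))$. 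Combining this with the iterated identities $\varphi(\varphi(xx^{\ast}) - xx^{\ast}) = 0$ and the analogous identity obtained after applying $\varphi$ to $\varphi(cc^{\ast})$ -- both direct consequences of the projection property -- one pins down $\varphi(xc^{\ast}) =_{\lambda} 0$ for every $\lambda$, hence $\varphi(xc^{\ast}) = 0$. The parallel argument with the transposed matrix $M' = \begin{pmatrix} x & 0 \\ c & 0 \end{pmatrix}$ yields $\varphi(cx^{\ast}) = 0$, and taking adjoints covers all four combinations $\varphi(xc), \varphi(cx)$.

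The main obstacle is that the Schwarz defects $\varphi(xx^{\ast}) - xx^{\ast}$ and $\varphi(cc^{\ast})$ are not individually forced to vanish (a general local $\mathcal{CP}$-projection is not faithful on positives, as already witnessed by compression-type examples), so the block-Cauchy--Schwarz bound gives only a conditional estimate. The resolution, which is the technical content of \cite[Corollary 5.6]{D}, is to lift the argument to the local Stinespring dilation $\varphi(a) = V^{\ast}\pi(a)V$ arising from the quantized-domain framework: the projection identity translates to $V^{\ast}\pi(c)V = 0$ for $c \in \ker(\varphi)$, and a direct computation of $V^{\ast}\pi(x)\pi(c)V$ via the decomposition $\pi(c)V = P\pi(c)V + (1-P)\pi(c)V$ with $P = VV^{\ast}$ -- performed level-wise through the Arens--Michael inverse system $\mathcal{A} = \varprojlim \mathcal{A}_{\lambda}$ -- collapses the expression to zero.
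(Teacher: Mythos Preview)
The paper does not prove this lemma at all: it is simply quoted from \cite[Corollary~5.6]{D}, so there is no ``paper's own proof'' to compare against. What matters is whether your argument stands on its own, and at the moment it does not.

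Your $2\times 2$ setup is the right one, and the positive block
\[
\begin{pmatrix} \varphi(xx^{\ast})-xx^{\ast} & \varphi(xc^{\ast}) \\ \varphi(cx^{\ast}) & \varphi(cc^{\ast}) \end{pmatrix}\ \geq_{\lambda}\ 0
\]
is correct. The gap is in what you do next. Invoking the seminorm bound $p_{\lambda}(\varphi(xc^{\ast}))^{2}\le p_{\lambda}(\varphi(xx^{\ast})-xx^{\ast})\,p_{\lambda}(\varphi(cc^{\ast}))$ is fine, but neither factor on the right is forced to vanish, as you yourself concede; knowing that $\varphi$ annihilates $\varphi(xx^{\ast})-xx^{\ast}$ says nothing about its $p_{\lambda}$-seminorm. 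The missing move is not to estimate seminorms of the entries, but to apply $\varphi^{(2)}$ to the entire positive block matrix. That produces another positive matrix whose $(1,1)$ entry is $\varphi(\varphi(xx^{\ast})-xx^{\ast})=0$, and for a positive $2\times 2$ block with vanishing $(1,1)$ corner the off-diagonal is forced to be zero; since $\varphi(\varphi(xc^{\ast}))=\varphi(xc^{\ast})$, you get $\varphi(xc^{\ast})=0$ directly. This is the Choi--Effros trick, and it closes the argument in one line.

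Your fallback via Stinespring does not close the gap either. Writing $\pi(c)V = P\pi(c)V + (1-P)\pi(c)V$ with $P=VV^{\ast}$ handles the first piece (it gives $\varphi(x)\varphi(c)=0$), but the second piece $V^{\ast}\pi(x)(1-P)\pi(c)V$ is not zero without knowing that the range of $V$ is invariant under $\pi(x)$ --- equivalently, that $x$ lies in the multiplicative domain $\mathcal{M}_{\varphi}$, which is exactly what is at issue. So the dilation sketch is circular as written.
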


As in the case of $C^{\ast}$-algebras, the range of an admissible projection on a unital locally $C^*$-algebra $\mathcal{A}$ has a structure of unital locally $C^*$-algebra.

\begin{proposition}\label{P1}
 Let $\varphi:\mathcal{A}\rightarrow \mathcal{A}$ be an admissible projection. Then $Im \left( \varphi \right) $ is a unital locally $C^{\ast}$-algebra with respect to the multiplication
defined by $b\cdot c=\varphi \left( bc\right) $ for all $b,c\in Im \left( \varphi \right) $, the involution induced by that on $\mathcal{A}$ 
and the family $\lbrace  p_{\lambda }\restriction_{Im\left( \varphi
\right) }\rbrace_{\lambda \in \Lambda }$ of  $C^{\ast}$-seminorms.
\end{proposition}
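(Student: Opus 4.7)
The plan is to verify the axioms of a unital locally $C^{\ast}$-algebra for $\mathrm{Im}(\varphi)$ one by one, using Lemma \ref{L1}, the Schwarz inequality, and the admissibility of $\varphi$.

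First I would observe that $\mathrm{Im}(\varphi)$ is closed in $\mathcal{A}$ by the remark following the definition of projection, is stable under the involution of $\mathcal{A}$ (because any local $\mathcal{CP}$-map satisfies $\varphi(a^{\ast}) = \varphi(a)^{\ast}$), and contains $1_{\mathcal{A}} = \varphi(1_{\mathcal{A}})$. Hence the restricted family $\{p_{\lambda} \restriction_{\mathrm{Im}(\varphi)}\}_{\lambda \in \Lambda}$ already makes $\mathrm{Im}(\varphi)$ into a complete Hausdorff locally convex $\ast$-vector space with a unit, and the upward-filtered condition is inherited from $\mathcal{A}$.

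Next I would check that $b \cdot c := \varphi(bc)$ defines a unital associative multiplication compatible with the involution. Bilinearity and the unit law $1_{\mathcal{A}} \cdot b = \varphi(b) = b$ are immediate. For $b,c,d \in \mathrm{Im}(\varphi)$, I would apply Lemma \ref{L1} together with $\varphi(d) = d$ to compute
\[
(b \cdot c) \cdot d \;=\; \varphi\bigl(\varphi(bc)\, d\bigr) \;=\; \varphi\bigl(bc\, \varphi(d)\bigr) \;=\; \varphi(bcd),
\]
and symmetrically $b \cdot (c \cdot d) = \varphi(bcd)$, which yields associativity. The identity $(b \cdot c)^{\ast} = c^{\ast} \cdot b^{\ast}$ follows at once from $\varphi$ intertwining the involution.

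Finally I would verify the $C^{\ast}$-seminorm conditions. Admissibility of $\varphi$ gives, via Remark \ref{a}, that for every $\lambda$ it descends to a unital $\mathcal{CP}$-map $\varphi_{\lambda} : \mathcal{A}_{\lambda} \to \mathcal{A}_{\lambda}$ on a $C^{\ast}$-algebra, which is automatically contractive; thus $p_{\lambda}(\varphi(x)) \leq p_{\lambda}(x)$ for every $x \in \mathcal{A}$. Submultiplicativity follows at once,
\[
p_{\lambda}(b \cdot c) \;=\; p_{\lambda}(\varphi(bc)) \;\leq\; p_{\lambda}(bc) \;\leq\; p_{\lambda}(b)\, p_{\lambda}(c).
\]
The main step, and the one I expect to carry the weight of the argument, is the $C^{\ast}$-identity. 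By the Schwarz inequality and $\varphi(b) = b$, one has $\varphi(b^{\ast}b) \geq \varphi(b)^{\ast}\varphi(b) = b^{\ast}b \geq 0$ in $\mathcal{A}$; monotonicity of $p_{\lambda}$ on positive elements then gives $p_{\lambda}(b^{\ast}b) \leq p_{\lambda}(\varphi(b^{\ast}b))$, while contractivity gives the reverse inequality, so
\[
p_{\lambda}(b^{\ast} \cdot b) \;=\; p_{\lambda}(\varphi(b^{\ast}b)) \;=\; p_{\lambda}(b^{\ast}b) \;=\; p_{\lambda}(b)^{2}.
\]
This is precisely where admissibility is essential: without it, $\varphi$ would not be $p_{\lambda}$-contractive for each individual $\lambda$, and the two-sided squeeze between the Schwarz lower bound and the contractivity upper bound would collapse.
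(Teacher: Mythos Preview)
Your proposal is correct and follows essentially the same route as the paper: associativity via Lemma~\ref{L1}, submultiplicativity from $p_{\lambda}$-contractivity of $\varphi$, and the $C^{\ast}$-identity from the two-sided squeeze between the Schwarz inequality and contractivity. The paper phrases the key squeeze as the chain $p_{\lambda}(\varphi(b^{\ast}b)) \leq p_{\lambda}(b^{\ast}b) = p_{\lambda}(b)^{2} = p_{\lambda}(\varphi(b)^{\ast}\varphi(b)) \leq p_{\lambda}(\varphi(b^{\ast}b))$, which is exactly your argument with the substitution $\varphi(b)=b$ made at a different point; your framing via monotonicity of $p_{\lambda}$ on the positive inequality $0 \leq b^{\ast}b \leq \varphi(b^{\ast}b)$ is the same step viewed in the quotient $\mathcal{A}_{\lambda}$.
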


\begin{proof}
As in the case of $C^{\ast}$-algebras, using Lemma \ref{L1}, we obtain
that $Im\left( \varphi \right) $ has a structure of $\ast $
-algebra. From

\begin{enumerate}
\item $p_{\lambda }\left( b\cdot c\right) =p_{\lambda }\left( \varphi \left(
bc\right) \right) \leq p_{\lambda }\left( bc\right) \leq p_{\lambda }\left(
b\right) p_{\lambda }\left( c\right) $ for all $b,c\in Im\left(
\varphi \right) $ and for all $\lambda \in \Lambda ;$

\item $p_{\lambda }\left( b^{\ast }\right) =p_{\lambda }\left( b\right) $
for all $b\in Im\left( \varphi \right) $ and for all $\lambda \in
\Lambda ;$

\item $p_{\lambda }\left( b^{\ast }\cdot b\right) =p_{\lambda }\left(
\varphi \left( b^{\ast }b\right) \right) \leq p_{\lambda }\left( b^{\ast
}b\right) =p_{\lambda }\left( b\right) ^{2}=p_{\lambda }\left( \varphi
\left( b\right) \right) ^{2}=$

$p_{\lambda }\left( \varphi \left( b\right) ^{\ast }\varphi \left( b\right)
\right) \leq p_{\lambda }\left( \varphi \left( b^{\ast }b\right) \right)
=p_{\lambda }\left( b^{\ast }\cdot b\right) $ for all $b\in Im\left(
\varphi \right) $ and for all $\lambda \in \Lambda ,$
\end{enumerate}

we deduce that $\lbrace p_{\lambda}\restriction_{Im(\varphi)} \rbrace_{\lambda \in \Lambda }$ is a family of $C^{\ast}$-seminorms.
Therefore, $Im\left( \varphi \right) $ is a unital locally  $C^{\ast}$-algebra. 
\end{proof}

We point out that if $\varphi:\mathcal{A}\rightarrow \mathcal{A}$ is a
projection, then $Im\left( \varphi \right) $ is a $\ast $-algebra
with the multiplication and involution defined in Proposition \ref{P1}, but,
in general, $\lbrace p_{\lambda }\restriction_{Im\left( \varphi
\right) }\rbrace_{\lambda \in \Lambda }$ is not a family of $C^{\ast}$-seminorms.

The locally $C^{\ast}$-algebra $Im\left( \varphi \right) $ is denoted by $
C^{\ast }\left( \varphi \right) $. Let $j_{\varphi }:Im\left( \varphi
\right) \rightarrow C^{\ast }\left( \varphi \right)  $ be the canonical map.
 Clearly, $ j_{\varphi }$ is a surjective isometric linear map, and so,
there exists $j_{\varphi }^{-1}:C^{\ast }\left( \varphi \right) \rightarrow 
Im\left( \varphi \right) $ which is a surjective isometric linear
map. Moreover, $j_{\varphi }$ and $j_{\varphi }^{-1}$ are unital admissible
local $\mathcal{CP}$-maps.

\section{Admissible injective locally $C^{\ast}$-algebras}

Let $\mathcal{A}$ be a unital locally $C^{\ast}$-algebra with the topology
defined by the family of $C^{\ast}$-seminorms $\lbrace p_{\lambda }\rbrace_{\lambda
\in \Lambda }$. A linear subspace $\mathcal{S}$ of $\mathcal{A}$ is \textit{self-adjoint} if $\mathcal{S=S}^{\ast}$. An element $a$ in $
\mathcal{S}$ is local positive if it is local positive in $\mathcal{A}$.

A unital locally $C^{\ast}$-algebra $\mathcal{A}$ is \textit{injective} if
given any self-adjoint linear subspace $\mathcal{S}$ of a unital
locally $C^{\ast}$-algebra $\mathcal{B}$, containing the unit of $\mathcal{B}$, any unital local $\mathcal{CP}$- map from $\mathcal{S}$ to $\mathcal{A}$ extends to a unital local $\mathcal{CP}$-map from $\mathcal{B}$ to $\mathcal{A}$. By \cite[Theorem 8.1]{D}, locally $C^*$-algebra $C^*\left( \mathcal{D}_{\mathcal{E}} \right)$, where $\lbrace \mathcal{H}; \mathcal{E}=\lbrace \mathcal{H}_{n} \rbrace_{n\geq 1}; \mathcal{D}_{\mathcal{E}} \rbrace$ is a Fr\'{e}chet quantized domain, is an injective locally $C^*$-algebra.

\begin{definition}
A unital locally $C^*$-algebra $\mathcal{A}$ is admissible injective if for any self-adjoint subspace $\mathcal{S}$ containing the unit of a unital locally $C^*$-algebra $\mathcal{B}$, any unital admissible local $\mathcal{CP}$-map from $\mathcal{S}$ to $\mathcal{A}$ extends to a unital admissible local $\mathcal{CP}$-map from $\mathcal{B}$ to $\mathcal{A}$.

\end{definition}

\begin{remark}\label{Rem}
If $\lbrace \mathcal{H}; \mathcal{E}=\lbrace \mathcal{H}_{n} \rbrace_{n\geq 1}; \mathcal{D}_{\mathcal{E}} \rbrace$ is a Fr\'{e}chet quantized domain, then $C^*\left( \mathcal{D}_{\mathcal{E}} \right)$ is an admissible injective locally $C^*$-algebra (see, for example, the proof of \cite[Theorem 3.2]{J1}).
\end{remark}

\begin{remark}
Let $\mathcal{A}$ be a $C^*$-algebra. Then, $\mathcal{A}$ is admissible injective if and only if $\mathcal{A}$ is injective in the category whose objects are $C^*$-algebras and morphisms are completely positive maps.

\end{remark}

\begin{remark}
Let $\mathcal{A}$ be a unital Fr\'{e}chet locally $C^*$-algebra.  By \cite[Theorem 8.2]{D}, $\mathcal{A}$ can be identified with a locally $C^*$-subalgebra of 
$C^*\left( \mathcal{D}_{\mathcal{E}} \right)$ for a certain Fr\'{e}chet quantized domain $\lbrace \mathcal{H}; \mathcal{E}=\lbrace \mathcal{H}_{n} \rbrace_{n\geq 1}; \mathcal{D}_{\mathcal{E}} \rbrace$.
\begin{itemize}
\item[(1)] If $\mathcal{A}$ is admissible injective, then the identity map $id_{\mathcal{A}}:\mathcal{A}\rightarrow\mathcal{A}$ extends to an admissible projection $\varphi:C^*\left( \mathcal{D}_{\mathcal{E}} \right)\rightarrow C^*\left( \mathcal{D}_{\mathcal{E}} \right)$ whose range is $\mathcal{A}$, and by \cite[Theorem 8.2]{D}, $\mathcal{A}$ is injective.
\item[(2)] $\mathcal{A}$ is a unital admissible injective Fr\'{e}chet locally $C^*$-algebra if and only if it is the range of an admissible projection on $C^*\left( \mathcal{D}_{\mathcal{E}} \right)$.
\end{itemize}
\end{remark}

Let $\lbrace \mathcal{H}; \mathcal{E}=\lbrace \mathcal{H}_{n} \rbrace_{n\geq 1}; \mathcal{D}_{\mathcal{E}} \rbrace$ be a Fr\'{e}chet quantized domain. For each $n\geq 1$, $\mathcal{P}_{n}$ is the orthogonal projection of $\mathcal{H}$ on $\mathcal{H}_{n}$. Let $\mathcal{R}$ be the commutative subring in $B(\mathcal{H})$ generated by $\lbrace \mathcal{P}_{n} \rbrace_{n\geq 1}\cup\lbrace id_{\mathcal{H}} \rbrace$. A \textit{quantum $\mathcal{R}$-module projection} on $C^*\left( \mathcal{D}_{\mathcal{E}} \right)$ is a projection $\varphi:C^*\left( \mathcal{D}_{\mathcal{E}} \right)\rightarrow C^*\left( \mathcal{D}_{\mathcal{E}} \right)$ with the property that $\varphi(eT)=e\varphi(T)$ for all $T\in C^*\left( \mathcal{D}_{\mathcal{E}} \right)$ and for all $e\in\mathcal{R}$ \cite{DD}.

An injective locally $C^*$-algebra $\mathcal{A}\subseteq C^*\left( \mathcal{D}_{\mathcal{E}} \right)$ with the property that $\mathcal{RA}\subseteq\mathcal{A}$ is called an \textit{injective quantum $\mathcal{R}$-module} \cite[Section 4.1]{DD}. By \cite[Lemma 8.1]{D}, a locally $C^*$-algebra $\mathcal{A}\subseteq C^*\left( \mathcal{D}_{\mathcal{E}} \right)$ is an injective quantum $\mathcal{R}$-module if and only if it is the range of a quantum $\mathcal{R}$-module projection on $C^*\left( \mathcal{D}_{\mathcal{E}} \right)$.

\begin{remark}
If $\varphi:C^*\left( \mathcal{D}_{\mathcal{E}} \right)\rightarrow C^*\left( \mathcal{D}_{\mathcal{E}} \right)$ is a \textit{quantum $\mathcal{R}$-module projection} on $C^*\left( \mathcal{D}_{\mathcal{E}} \right)$, then $\varphi$ is an admissible projection on $C^*\left( \mathcal{D}_{\mathcal{E}} \right)$. Therefore, any unital injective quantum $\mathcal{R}$-module Fr\'{e}chet locally $C^*$-algebra $\mathcal{A}\subseteq C^*\left( \mathcal{D}_{\mathcal{E}} \right)$ is admissible injective.

\end{remark}

A \textit{Fr\'{e}chet $W^*$-algebra} is an inverse limit of a countable inverse system of $W^*$-algebras whose connecting maps are $W^*$-morphisms \cite{F1}.

\begin{remark}\label{Rem2}
Let $\mathcal{A}$ be a Fr\'{e}chet locally $W^*$-algebra. By Dosiev \cite[Theorem 3.1]{D1}, $\mathcal{A}$ can be identified with a locally $C^*$-subalgebra of $C^*\left( \mathcal{D}_{\mathcal{E}} \right)$ for some Fr\'{e}chet quantized domain  $\lbrace \mathcal{H}; \mathcal{E}=\lbrace \mathcal{H}_{n} \rbrace_{n\geq 1}; \mathcal{D}_{\mathcal{E}} \rbrace$ such that $\mathcal{P}_{n}\mathcal{A}\subseteq\mathcal{A}$ for all $n\geq 1$. Therefore, $\mathcal{A}$ is injective if and only if $\mathcal{A}$ is an injective quantum $\mathcal{R}$-module if and only if $\mathcal{A}$ is admissible injective.

\end{remark}

\begin{lemma}\label{projection}
 Let $\varphi :\mathcal{A}\rightarrow \mathcal{A}$ be an admissible projection. If $\mathcal{A}$ is admissible injective, then $C^{\ast }\left( \varphi \right) $ is admissible injective.
\end{lemma}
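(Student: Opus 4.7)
The plan is to extend any given unital admissible local $\mathcal{CP}$-map $\psi:\mathcal{S}\to C^{\ast}(\varphi)$ --- where $\mathcal{S}$ is a self-adjoint subspace containing the unit of some unital locally $C^{\ast}$-algebra $\mathcal{B}$ --- by lifting it to a map into $\mathcal{A}$, extending there using admissible injectivity of $\mathcal{A}$, and then projecting back down through $\varphi$. The pieces needed are already on the table: the canonical map $j_{\varphi}:Im(\varphi)\to C^{\ast}(\varphi)$ and its inverse are unital admissible local $\mathcal{CP}$-maps by the paragraph preceding the lemma, and $\varphi$ itself is an admissible projection by hypothesis.

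Concretely I will proceed in three steps. First, form the composition $j_{\varphi}^{-1}\circ\psi:\mathcal{S}\to Im(\varphi)\subseteq\mathcal{A}$, which is a unital admissible local $\mathcal{CP}$-map because admissible local $\mathcal{CP}$-maps compose: if each factor sends $\lambda$-positive, respectively $\lambda$-null, matrices to $\lambda$-positive, respectively $\lambda$-null, matrices for every $\lambda\in\Lambda$, then so does their composition, directly from the definition. Second, invoke admissible injectivity of $\mathcal{A}$ to obtain a unital admissible local $\mathcal{CP}$-extension $\tilde{\psi}:\mathcal{B}\to\mathcal{A}$ of $j_{\varphi}^{-1}\circ\psi$. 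Third, define the candidate extension
\[
\widehat{\psi}:=j_{\varphi}\circ\varphi\circ\tilde{\psi}:\mathcal{B}\to C^{\ast}(\varphi).
\]

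Two things remain to verify. For admissibility, $\widehat{\psi}$ is a composition of three unital admissible local $\mathcal{CP}$-maps (namely $\tilde{\psi}$, the admissible projection $\varphi$, and $j_{\varphi}$), and so is itself of that type. For the extension property, fix $s\in\mathcal{S}$: then $\tilde{\psi}(s)=j_{\varphi}^{-1}(\psi(s))\in Im(\varphi)$, and since $\varphi\circ\varphi=\varphi$ forces $\varphi$ to act as the identity on its image, $\varphi(\tilde{\psi}(s))=\tilde{\psi}(s)$; consequently $\widehat{\psi}(s)=j_{\varphi}(j_{\varphi}^{-1}(\psi(s)))=\psi(s)$. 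I do not anticipate a serious obstacle: this is the standard Hamana-style ``lift then compress'' argument transplanted to the admissible local $\mathcal{CP}$ setting. The only subtle point is maintaining admissibility --- i.e.\ keeping the common index set $\Lambda$ throughout --- which is automatic because $\mathcal{A}$, $Im(\varphi)$, and $C^{\ast}(\varphi)$ all carry the seminorm family $\{p_{\lambda}\}_{\lambda\in\Lambda}$ and admissibility is preserved under composition.
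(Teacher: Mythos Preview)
Your proposal is correct and follows essentially the same approach as the paper's proof: compose $j_{\varphi}^{-1}\circ\psi$ to land in $\mathcal{A}$, extend using admissible injectivity of $\mathcal{A}$, then compress back via $j_{\varphi}\circ\varphi$. Your verification that $\varphi$ fixes $\tilde{\psi}(s)$ because $\tilde{\psi}(s)\in Im(\varphi)$ and $\varphi$ is idempotent is exactly the paper's computation $j_{\varphi}\circ\varphi\circ j_{\varphi}^{-1}\circ\psi=\psi$, just unpacked in slightly more detail.
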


\begin{proof}
We seen that $j_{\varphi }$ and $j_{\varphi }^{-1}$ are unital admissible
local $\mathcal{CP}$-maps. Since $Im\left( \varphi \right) \subseteq 
\mathcal{A}$, we can assume that $j_{\varphi }^{-1}$ is a unital admissible
local $\mathcal{CP}$-map from $C^{\ast }\left( \varphi \right) $ to $
\mathcal{A}$. Let$\ \mathcal{B}$ be a unital locally $C^{\ast }$-algebra, $
\mathcal{S}$ be a self-adjoint subspace of $\mathcal{B}$ containing the unit
of $\mathcal{B}$ and $\psi :\mathcal{S}\rightarrow C^{\ast }\left( \varphi
\right) $ be a unital admissible local $\mathcal{CP}$-map. Then, $j_{\varphi
}^{-1}\circ \psi :\mathcal{S}\rightarrow \mathcal{A}$ is a unital admissible local $\mathcal{CP}$-map, and since $\mathcal{A}$ is admissible injective,
there exists a unital admissible local $\mathcal{CP}$-map $\widetilde{\psi }:
\mathcal{B}\rightarrow \mathcal{A}$ such that $ \widetilde{\psi }
\restriction_{\mathcal{S}}=j_{\varphi }^{-1}\circ \psi $. Let $j_{\varphi
}\circ \varphi \circ \widetilde{\psi }:\mathcal{B\rightarrow }C^{\ast
}\left( \varphi \right) $ be a map. Clearly, $j_{\varphi }\circ \varphi \circ 
\widetilde{\psi }$ is a unital admissible local $\mathcal{CP}$-map, and $
 j_{\varphi }\circ \varphi \circ \widetilde{\psi }\restriction_{
\mathcal{S}}=j_{\varphi }\circ \varphi \circ j_{\varphi }^{-1}\circ \psi
=\psi $. Therefore, $C^{\ast }\left( \varphi \right) $ is admissible injective.
\end{proof}

\section{Minimal projections on admissible injective  locally $C^{\ast}$-algebras}

\subsection{Minimal family of seminorms}

Let $\mathcal{A}$ be a unital locally $C^{\ast}$-algebra whose topology is
defined by the family of $C^{\ast}$-seminorms $\lbrace p_{\lambda }\rbrace_{\lambda
\in \Lambda }$ and $\mathcal{B\subseteq A}$ be a locally $C^{\ast}$
-subalgebra which contains the unit of $\mathcal{A}$.

\begin{definition}\label{m}
 A directed family of seminorms $\lbrace \widetilde{p}_{\lambda
}\rbrace_{\lambda \in \Lambda }$ on $\mathcal{A}$ is a family of $\mathcal{B}$-seminorms if for each $\lambda \in \Lambda $, the following conditions are satisfied:

\begin{enumerate}
\item $\widetilde{p}_{\lambda }\left( a\right) \leq p_{\lambda }\left(
a\right) $ for all $a\in \mathcal{A};$

\item $\widetilde{p}_{\lambda }\left( b\right) =p_{\lambda }\left( b\right) $
for all $b\in \mathcal{B};$

\item $\widetilde{p}_{\lambda }\left( bac\right) \leq p_{\lambda }\left(
b\right) \widetilde{p}_{\lambda }\left( a\right) p_{\lambda }\left( c\right) 
$ for all $a\in \mathcal{A}$ and for all $b,c\in \mathcal{B}$.
\end{enumerate}
\end{definition}

\begin{lemma}\label{help}
 Let $\varphi:\mathcal{A}\rightarrow \mathcal{A}$  be a unital admissible local $\mathcal{CP}$-map such that $\varphi \left( b\right) =b$
for all $b\in \mathcal{B}$. Then

\begin{enumerate}
\item $\left\lbrace p_{\lambda }\circ \varphi \right\rbrace_{\lambda \in \Lambda }$
is a family of $\mathcal{B}$-seminorms on $\mathcal{A}$.

\item $\left\lbrace \widetilde{p}_{\lambda}\right\rbrace _{\lambda \in \Lambda }$ is a family of $\mathcal{B}$-seminorms on $\mathcal{A}$,
where 
\begin{equation*}
\widetilde{p}_{\lambda }\left( a\right) =\limsup\limits_{n}\frac{1}{n}
p_{\lambda }\left( \varphi \left( a\right) +\varphi ^{2}\left( a\right)
+\cdot \cdot \cdot +\varphi ^{n}\left( a\right) \right)
\end{equation*}
\end{enumerate}
\end{lemma}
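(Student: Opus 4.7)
The plan is to verify the three Definition \ref{m} conditions for each of the two families, together with a directedness check. The backbone is the contractivity estimate $p_{\lambda}(\varphi(a))\le p_{\lambda}(a)$ and the multiplicative-domain identity $\varphi(bac)=b\varphi(a)c$ for $b,c\in\mathcal{B}$, $a\in\mathcal{A}$.

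For part (1), I would first invoke Remark \ref{a} to produce, for each $\lambda\in\Lambda$, a unital $\mathcal{CP}$-map $\varphi_{\lambda}:\mathcal{A}_{\lambda}\to\mathcal{A}_{\lambda}$ satisfying $\varphi_{\lambda}\circ\pi_{\lambda}^{\mathcal{A}}=\pi_{\lambda}^{\mathcal{A}}\circ\varphi$. Since unital completely positive maps between $C^{\ast}$-algebras are contractive, this gives Definition \ref{m}(1), namely $p_{\lambda}(\varphi(a))\le p_{\lambda}(a)$. Condition (2) is immediate from $\varphi(b)=b$ for $b\in\mathcal{B}$. For condition (3), the assumption $\varphi(b)=b$ yields $\varphi(b^{\ast}b)=b^{\ast}b=\varphi(b)^{\ast}\varphi(b)$ and similarly $\varphi(bb^{\ast})=\varphi(b)\varphi(b)^{\ast}$, so $\mathcal{B}\subseteq\mathcal{M}_{\varphi}$; by the already-cited multiplicative-domain theorem one has $\varphi(bac)=b\varphi(a)c$, and therefore
\[
(p_{\lambda}\circ\varphi)(bac)=p_{\lambda}(b\varphi(a)c)\le p_{\lambda}(b)\,(p_{\lambda}\circ\varphi)(a)\,p_{\lambda}(c).
\]
Directedness of $\{p_{\lambda}\circ\varphi\}_{\lambda}$ follows from directedness of $\{p_{\lambda}\}_{\lambda}$.

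For part (2), I would first check that the $\limsup$ is finite and that $\widetilde{p}_{\lambda}$ is genuinely a seminorm. Iterating the contractivity estimate from part (1) gives $p_{\lambda}(\varphi^{k}(a))\le p_{\lambda}(a)$ for every $k\ge 1$, hence by the triangle inequality $\frac{1}{n}p_{\lambda}(\varphi(a)+\cdots+\varphi^{n}(a))\le p_{\lambda}(a)$, which yields both finiteness and condition (1) of Definition \ref{m}. Subadditivity and absolute homogeneity of $\widetilde{p}_{\lambda}$ are inherited from those of $p_{\lambda}$ by the standard properties of $\limsup$. For condition (2), since $\varphi$ fixes $\mathcal{B}$ we have $\varphi^{k}(b)=b$ for every $k$, so $\frac{1}{n}p_{\lambda}(nb)=p_{\lambda}(b)$ for all $n$, giving $\widetilde{p}_{\lambda}(b)=p_{\lambda}(b)$.

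For condition (3), the key observation is that the identity $\varphi(bac)=b\varphi(a)c$ propagates through powers of $\varphi$: by induction on $k$, using again that $\mathcal{B}\subseteq\mathcal{M}_{\varphi}$, one gets $\varphi^{k}(bac)=b\,\varphi^{k}(a)\,c$ for every $k\ge 1$ (the inductive step reads $\varphi^{k+1}(bac)=\varphi(b\varphi^{k}(a)c)=b\varphi^{k+1}(a)c$). Summing over $k=1,\ldots,n$, factoring $b$ and $c$ out, and applying submultiplicativity of $p_{\lambda}$ gives
\[
\tfrac{1}{n}p_{\lambda}\bigl(\varphi(bac)+\cdots+\varphi^{n}(bac)\bigr)\le p_{\lambda}(b)\,\tfrac{1}{n}p_{\lambda}\bigl(\varphi(a)+\cdots+\varphi^{n}(a)\bigr)\,p_{\lambda}(c);
\]
passing to $\limsup$ yields $\widetilde{p}_{\lambda}(bac)\le p_{\lambda}(b)\widetilde{p}_{\lambda}(a)p_{\lambda}(c)$. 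Directedness of $\{\widetilde{p}_{\lambda}\}_{\lambda}$ again follows from that of $\{p_{\lambda}\}_{\lambda}$.

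The only step that needs care is the stability of the multiplicative-domain identity under iteration of $\varphi$ in part (2); everything else reduces to the two input facts (contractivity and $\mathcal{B}\subseteq\mathcal{M}_{\varphi}$) plus elementary $\limsup$ manipulations.
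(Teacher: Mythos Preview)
Your proposal is correct and follows essentially the same route as the paper's proof: both arguments rest on the contractivity $p_{\lambda}(\varphi(a))\le p_{\lambda}(a)$ coming from admissibility, the multiplicative-domain identity $\varphi(bac)=b\varphi(a)c$ (via $\mathcal{B}\subseteq\mathcal{M}_{\varphi}$), and, for part~(2), the iterated identity $\varphi^{k}(bac)=b\varphi^{k}(a)c$. Your write-up is somewhat more explicit than the paper's (you spell out finiteness, the seminorm axioms, directedness, and the induction), but there is no substantive difference in approach.
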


\begin{proof}
$\left( 1\right) $ By the admissibility of $\varphi $, we have 
\begin{equation*}
\left( p_{\lambda }\circ \varphi \right) \left( a\right) \leq p_{\lambda
}\left( \varphi \left( a\right) \right) \leq p_{\lambda }\left( a\right)
\end{equation*}
for all $a\in \mathcal{A}$ and for all $\lambda \in \Lambda $. Since $
\varphi \left( b\right) =b$ for all $b\in \mathcal{B}$, we have
\begin{equation*}
\left( p_{\lambda }\circ \varphi \right) \left( b\right) =p_{\lambda }\left(
b\right)
\end{equation*}
for all $b\in \mathcal{B}$ and for all $\lambda \in \Lambda $. By \cite[Corrolary 5.5]{D}, $\varphi \left( bac\right) =\varphi \left( b\right)
\varphi \left( a\right) \varphi \left( c\right) =b\varphi \left( a\right) c$
for all $a\in \mathcal{A}$ and for all $b,c\in \mathcal{B}$, since $\varphi
\left( b^{\ast }b\right) =\varphi \left( b\right) ^{\ast }\varphi \left(
b\right) $ and $\varphi \left( bb^{\ast }\right) =\varphi \left( b\right)
\varphi \left( b\right) ^{\ast }$ for all $b\in \mathcal{B}$. Then 
\begin{equation*}
\left( \left( p_{\lambda }\circ \varphi \right) \left( bac\right) \right)
=p_{\lambda }\left( b\varphi \left( a\right) c\right) \leq p_{\lambda
}\left( b\right) \left( p_{\lambda }\circ \varphi \right) \left( a\right)
p_{\lambda }\left( c\right)
\end{equation*}
for all $a\in \mathcal{A}$ and for all $b,c\in \mathcal{B}$ and for all $
\lambda \in \Lambda $.

$\left( 2\right)$ We have $\widetilde{p}_{\lambda }\left( a\right) \leq
p_{\lambda }\left( \varphi \left( a\right) \right) \leq p_{\lambda }\left(
a\right) $ for all $a\in \mathcal{A}$ and for all $\lambda \in \Lambda $, 
and $\widetilde{p}_{\lambda }\left( b\right) =p_{\lambda }\left( b\right) $
for all $b\in \mathcal{B}$ and for all $\lambda \in \Lambda $. Also 
\begin{eqnarray*}
\widetilde{p}_{\lambda }\left( bac\right) &=&\limsup_{n}\frac{1}{n}
p_{\lambda }\left( \varphi \left( bac\right) +\cdot \cdot \cdot +\varphi
^{n}\left( bac\right) \right) \\
&=&\limsup_{n}\frac{1}{n}p_{\lambda }\left( b\left( \varphi \left( a\right)
+\cdot \cdot \cdot +\varphi ^{n}\left( a\right) \right) c\right) \\
&\leq &p_{\lambda }\left( b\right) \limsup_{n}\frac{1}{n}p_{\lambda }\left(
\varphi \left( a\right) +\cdot \cdot \cdot +\varphi ^{n}\left( a\right)
\right) p_{\lambda }\left( c\right) \\
&=&p_{\lambda }\left( b\right) \widetilde{p}_{\lambda }\left( a\right)
p_{\lambda }\left( c\right)
\end{eqnarray*}
for all $a\in \mathcal{A}$ and for all $b,c \in \mathcal{B}$ and for all $\lambda \in \Lambda $.
\end{proof}

\begin{remark}\label{1}
If $\lbrace \widetilde{p}_{\lambda }\rbrace_{\lambda \in \Lambda }$ is a
family of $\mathcal{B}$-seminorms on $\mathcal{A}$, then for each $\lambda
\in \Lambda $, there exists a $\mathcal{B}_{\lambda }$-seminorm $\widetilde{p}_{\lambda ,\mathcal{A}_{\lambda }}$ on $\mathcal{A}_{\lambda }$ such that $\widetilde{p}_{\lambda ,\mathcal{A}_{\lambda }}\left( \pi _{\lambda }^{\mathcal{A}}\left( a\right) \right) =\widetilde{p}_{\lambda }\left( a\right) 
$ for all $a\in \mathcal{A}$.
 Indeed, for each $\lambda \in \Lambda ,$ since 
$\widetilde{p}_{\lambda }\left( a\right) \leq p_{\lambda }\left( a\right) $
for all $a\in \mathcal{A}$, there is a map $\widetilde{p}_{\lambda ,\mathcal{A}_{\lambda }}:\mathcal{A}_{\lambda }\rightarrow [ 0,\infty )$ such that $\widetilde{p}_{\lambda ,\mathcal{A}_{\lambda }}\left( \pi _{\lambda }^{\mathcal{A}}\left( a\right) \right) =\widetilde{p}_{\lambda }\left( a\right) 
$ for all $a\in \mathcal{A}$. Clearly, $\widetilde{p}_{\lambda ,\mathcal{A}_{\lambda }}$ is a seminorm. On the other hand, we have

\begin{enumerate}
\item $\widetilde{p}_{\lambda ,\mathcal{A}_{\lambda }}\left( \pi _{\lambda
}^{\mathcal{A}}\left( a\right) \right) =\widetilde{p}_{\lambda }\left(
a\right) \leq p_{\lambda }\left( a\right) =\left\Vert \pi _{\lambda }^{\mathcal{A}}\left( a\right) \right\Vert _{\mathcal{A}_{\lambda }}$, for all $a\in \mathcal{A};$

\item $\widetilde{p}_{\lambda ,\mathcal{A}_{\lambda }}\left( \pi _{\lambda
}^{\mathcal{A}}\left( b\right) \right) =\widetilde{p}_{\lambda }\left(
b\right) =p_{\lambda }\left( b\right) =\left\Vert \pi _{\lambda }^{\mathcal{A}}\left( b\right) \right\Vert _{\mathcal{A}_{\lambda }}$, for all $b\in 
\mathcal{B}$;

 \item
\begin{align*}
\widetilde{p}_{\lambda ,\mathcal{A}_{\lambda }}\left( \pi _{\lambda}^{\mathcal{A}}\left( b\right) \pi _{\lambda }^{\mathcal{A}}\left( a\right) \pi _{\lambda }^{\mathcal{A}}\left( c\right) \right) 
&= \widetilde{p}_{\lambda,\mathcal{A}_{\lambda }}\left( \pi _{\lambda }^{\mathcal{A}}\left( bac\right) \right) \\
&= \widetilde{p}_{\lambda }\left( bac\right) \\
&\leq p_{\lambda }\left( b\right) \widetilde{p}_{\lambda }\left( a\right) p_{\lambda }\left( c\right) \\
&= \left\Vert \pi _{\lambda }^{\mathcal{A}}\left( b\right) \right\Vert _{\mathcal{A}_{\lambda }} \widetilde{p}_{\lambda ,\mathcal{A}_{\lambda }}\left( \pi _{\lambda }^{\mathcal{A}}\left( a\right) \right) \left\Vert \pi _{\lambda }^{\mathcal{A}}\left( c\right) \right\Vert _{\mathcal{A}_{\lambda }}.
\end{align*}
\end{enumerate}



Therefore, $\widetilde{p}_{\lambda ,\mathcal{A}_{\lambda }}$ is a $\mathcal{B}_{\lambda }$-seminorm on $\mathcal{A}_{\lambda }$.
\end{remark}

\begin{definition}
Let $\lbrace \widetilde{p}_{\lambda }\rbrace_{\lambda \in \Lambda }$ and $\lbrace \widetilde{q
}_{\lambda }\rbrace_{\lambda \in \Lambda }$ be two families of $\mathcal{B}$-seminorms on $\mathcal{A}$. We say that $\lbrace \widetilde{p}_{\lambda
}\rbrace_{\lambda \in \Lambda }\leq \lbrace \widetilde{q}_{\lambda }\rbrace_{\lambda \in
\Lambda}$ if for each $\lambda \in \Lambda $, $\widetilde{p}_{\lambda
}\left( a\right) \leq \widetilde{q}_{\lambda }\left( a\right) $ for all $a\in \mathcal{A}$.
\end{definition}

This relation is a partial ordering relation on the collection of all
families of $\mathcal{B}$-seminorms on $\mathcal{A}$.

\begin{proposition}\label{4}
Let $\mathcal{A}$ be a unital Fr\'{e}chet locally $C^{\ast}$-algebra whose topology is defined by the family of $C^{\ast}$-seminorms $\lbrace p_{n}\rbrace_{n \geq 1}$ and $\mathcal{B\subseteq A}$ be a locally $C^{\ast}$-subalgebra which contains the unit of $\mathcal{A}$. Then there exists a
minimal family of $\mathcal{B}$-seminorms on $\mathcal{A}$.
\end{proposition}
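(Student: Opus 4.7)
The plan is to apply Zorn's lemma to the (nonempty, since $\{p_n\}_{n \geq 1}$ itself is a family of $\mathcal{B}$-seminorms) partially ordered set of all families of $\mathcal{B}$-seminorms on $\mathcal{A}$, ordered by the relation just defined. The key is to show that every totally ordered chain admits a lower bound, and for this it is natural to take the pointwise infimum at each index $n$.

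First I would fix a chain $\mathcal{F} = \{\{\widetilde{p}_n^{(\alpha)}\}_{n \geq 1}\}_{\alpha \in A}$ of families of $\mathcal{B}$-seminorms on $\mathcal{A}$ and define, for each $n \geq 1$ and $a \in \mathcal{A}$,
\begin{equation*}
\widehat{p}_n(a) = \inf_{\alpha \in A} \widetilde{p}_n^{(\alpha)}(a).
\end{equation*}
Nonnegativity and absolute homogeneity of $\widehat{p}_n$ are immediate. The three defining conditions of a family of $\mathcal{B}$-seminorms are straightforward to transport across the infimum: condition (1) follows because $\widetilde{p}_n^{(\alpha)}(a) \leq p_n(a)$ for every $\alpha$; condition (2) because $\widetilde{p}_n^{(\alpha)}(b) = p_n(b)$ for every $\alpha$ and every $b \in \mathcal{B}$; condition (3) by taking infimum in $\widetilde{p}_n^{(\alpha)}(bac) \leq p_n(b)\, \widetilde{p}_n^{(\alpha)}(a)\, p_n(c)$. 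Likewise, directedness $\widehat{p}_n \leq \widehat{p}_{n+1}$ follows from directedness of each family in $\mathcal{F}$.

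The only delicate point, and the one I expect to be the main obstacle, is the triangle inequality for $\widehat{p}_n$. This is where one genuinely uses that $\mathcal{F}$ is totally ordered rather than just any set. Given $a, a' \in \mathcal{A}$ and $\varepsilon > 0$, choose $\alpha, \alpha' \in A$ with $\widetilde{p}_n^{(\alpha)}(a) < \widehat{p}_n(a) + \tfrac{\varepsilon}{2}$ and $\widetilde{p}_n^{(\alpha')}(a') < \widehat{p}_n(a') + \tfrac{\varepsilon}{2}$. Since $\mathcal{F}$ is a chain, we may assume the family indexed by $\alpha$ is the smaller of the two, so that also $\widetilde{p}_n^{(\alpha)}(a') \leq \widetilde{p}_n^{(\alpha')}(a') < \widehat{p}_n(a') + \tfrac{\varepsilon}{2}$. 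Then
\begin{equation*}
\widehat{p}_n(a + a') \leq \widetilde{p}_n^{(\alpha)}(a + a') \leq \widetilde{p}_n^{(\alpha)}(a) + \widetilde{p}_n^{(\alpha)}(a') < \widehat{p}_n(a) + \widehat{p}_n(a') + \varepsilon,
\end{equation*}
and letting $\varepsilon \to 0$ gives subadditivity. Here the fact that $\Lambda = \mathbb{N}$ (Fréchet hypothesis) is convenient, since one triangle inequality has to be checked at each of countably many indices $n$ rather than uncountably many $\lambda$.

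With these verifications, $\{\widehat{p}_n\}_{n \geq 1}$ is a family of $\mathcal{B}$-seminorms dominated by every element of $\mathcal{F}$, hence a lower bound for the chain. Zorn's lemma then produces a minimal family of $\mathcal{B}$-seminorms on $\mathcal{A}$.
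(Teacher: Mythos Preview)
Your proof is correct, and it takes a genuinely different route from the paper's.

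The paper does not use Zorn's lemma directly at the locally $C^*$-level. Instead it passes through the Arens--Michael decomposition: for each $n$ it invokes Hamana's $C^*$-algebra result to pick a minimal $\mathcal{B}_n$-seminorm $p_{n,\mathcal{A}_n}^{\min}$ on $\mathcal{A}_n$, pulls it back to $\widetilde{p}_n^{\min} = p_{n,\mathcal{A}_n}^{\min}\circ\pi_n^{\mathcal{A}}$, and then sets $p_n^{\min}(a) = \max\{\widetilde{p}_m^{\min}(a) : m \le n\}$ to enforce directedness. Minimality is then proved by an induction on $n$, comparing any competing family $\{q_n\}$ level by level. This is where the paper genuinely uses the Fr\'{e}chet hypothesis: the inductive step works because the index set is $\mathbb{N}$.

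Your Zorn argument, by contrast, is self-contained and does not appeal to the $C^*$-case; it is essentially Hamana's original Zorn argument carried out directly for families indexed by $\Lambda$. One small correction: your remark that countability is ``convenient'' for the triangle inequality is misplaced --- your argument for subadditivity works index by index and does not use $\Lambda = \mathbb{N}$ at all. In fact your proof establishes the proposition for an arbitrary directed index set $\Lambda$, which is slightly stronger than what the paper proves.

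The trade-off is that the paper's construction is explicit and records the relationship $p_n^{\min} = p_{n,\mathcal{A}_n}^{\min}\circ\pi_n^{\mathcal{A}}$ between the minimal family and the minimal seminorms at each $C^*$-level; this link is exploited later (Section~6) to identify the admissible injective envelope with the inverse limit of the Hamana envelopes of the $\mathcal{A}_n$. Your argument gives existence more cheaply but does not by itself produce that structural information.
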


\begin{proof}
Let $n\geq 1$ and $p_{n,\mathcal{A}_{n}}^{\min}$ be a minimal $\mathcal{B}
_{n}$-seminorm on $\mathcal{A}_{n}$ \cite[p.187]{H}. Then, the map $\widetilde{p}_{n}^{\min }:\mathcal{A}\rightarrow [ 0,\infty )$ defined
by $\widetilde{p}_{n}^{\min }\left( a\right) =p_{n,\mathcal{A}_{n}}^{\min
}\left( \pi _{n}^{\mathcal{A}}\left( a\right) \right) $ is a seminorm on $
\mathcal{A}$. Moreover, $\widetilde{p}_{n}^{\min }$ verifies the conditions $
(1),(2)$ and $(3)$ from Definition \ref{m}. Consider the map $p_{n}^{\min }:\mathcal{A}\rightarrow [ 0,\infty )$ defined by
\begin{equation*}
p_{n}^{\min }\left( a\right) =\max \lbrace \widetilde{p}_{m}^{\min }\left(
a\right) : m\leq n \rbrace.
\end{equation*}
It is easy to verify that $p_{n}^{\min }$ is a seminorm on $\mathcal{A}$
that verifies the conditions $(1),(2)$ and $(3)$ from Definition \ref{m}.
Then $\lbrace p_{n}^{\min }\rbrace_{n \geq 1}$ is a family of $\mathcal{B}$-seminorms on 
$\mathcal{A}$.

Let $\lbrace q_{n}\rbrace_{n\geq 1}$ be another family of $\mathcal{B}$-seminorms on $
\mathcal{A}$ such that $\lbrace q_{n}\rbrace_{n\geq 1}\leq \lbrace p_{n}^{\min }\rbrace_{n\geq 1}$. By Remark \ref{1}, for each $m\geq 1$, there exists a $\mathcal{B}_{m}$-seminorm $q_{m,\mathcal{A}_{m}}$ on $\mathcal{A}_{m}$ such that $q_{m, \mathcal{A}_{m}}\left( \pi _{m}^{\mathcal{A}}\left( a\right) \right)
=q_{m}\left( a\right) $ for all $a\in \mathcal{A}$. Since $p_{1,\mathcal{A}
_{1}}^{\min }$ is a minimal $\mathcal{B}_{1}$-seminorm on $\mathcal{A}_{1},$
from 
\begin{equation*}
q_{1,\mathcal{A}_{1}}\left( \pi _{1}^{\mathcal{A}}\left( a\right) \right)
=q_{1}\left( a\right) \leq p_{1}^{\min }\left( a\right) =\widetilde{p}
_{1}^{\min }\left( a\right) =p_{1,\mathcal{A}_{1}}^{\min }\left( \pi _{1}^{\mathcal{A}}\left( a\right) \right)
\end{equation*}
for all $a\in \mathcal{A}$, we deduce that 
\begin{equation*}
q_{1}\left( a\right) =q_{1,\mathcal{A}_{1}}\left( \pi _{1}^{\mathcal{A}
}\left( a\right) \right) =p_{1,\mathcal{A}_{1}}^{\min }\left( \pi _{1}^{
\mathcal{A}}\left( a\right) \right) =p_{1}^{\min }\left( a\right) =
\widetilde{p}_{1}^{\min }\left( a\right)
\end{equation*}
for all $a\in \mathcal{A}$, and so $q_{1}=p_{1}^{\min }=\widetilde{p}
_{1}^{\min }$. From
\begin{equation*}
\widetilde{p}_{1}^{\min }\left( a\right) =q_{1}\left( a\right) \leq
q_{2}\left( a\right) \leq p_{2}^{\min }\left( a\right) =\max \lbrace \widetilde{p}
_{1}^{\min }\left( a\right) ,\widetilde{p}_{2}^{\min }\left( a\right) \rbrace
\end{equation*}
for all $a\in \mathcal{A}$, it follows that 
\begin{equation*}
q_{2}\left( a\right) \leq p_{2}^{\min }\left( a\right) =\widetilde{p}
_{2}^{\min }\left( a\right)
\end{equation*}
for all $a\in \mathcal{A}$. Therefore, $q_{2,\mathcal{A}_{2}}\left( \pi
_{2}^{\mathcal{A}}\left( a\right) \right) \leq p_{2,\mathcal{A}_{2}}^{\min
}\left( \pi _{2}^{\mathcal{A}}\left( a\right) \right) $ for all $a\in 
\mathcal{A}$, and by the minimality of $p_{2,\mathcal{A}_{2}}^{\min }$, we
obtain that $q_{2,\mathcal{A}_{2}}=p_{2,\mathcal{A}_{2}}^{\min }$.
Consequently, 
\begin{equation*}
q_{2}=\widetilde{p}_{2}^{\min }=p_{2}^{\min }.
\end{equation*}
By induction, we have 
\begin{equation*}
q_{n}=p_{n}^{\min }=\widetilde{p}_{n}^{\min }
\end{equation*}
for every $n\geq 1$. Therefore, $\lbrace p_{n}^{\min }\rbrace_{n\geq 1}$ is a minimal
family of $\mathcal{B}$-seminorms on $\mathcal{A}.$
\end{proof}

In the following lines, we recall some results about positive linear functionals on locally $C^{\ast}$-algebras and continuous $\ast $-representations on Hilbert
spaces. We refer the reader to \cite{Fr} for further information about
continuous $\ast $-representations of locally $C^{\ast}$-algebras.
\medskip

 Let $\mathcal{A}$ be a unital locally $C^{\ast}$-algebra whose topology is
defined by the family of $C^{\ast}$-seminorms $\lbrace p_{\lambda }\rbrace_{\lambda
\in \Lambda }$.
A linear functional $f$ on $\mathcal{A}$ is \textit{positive} if $f\left(
a^{\ast }a\right) \geq 0$ for all $a\in \mathcal{A}$, and it is $\lambda $
\textit{-positive} if $f\left( a\right) \geq 0$ whenever $a\geq _{\lambda }0$
and $f\left( a\right) =0$ whenever $a=_{\lambda }0$.

\begin{remark}\label{22}
 Let $f:\mathcal{A} \rightarrow \mathbb{C}$ be a linear functional.

\begin{enumerate}
\item $f$ is $\lambda$-positive for some $\lambda \in \Lambda $ if and only
if $f$ is continuous and positive.

\item $f$ is $\lambda$-positive for some $\lambda \in \Lambda $ if and only
if there exists a positive linear functional $f_{\lambda}:\mathcal{A}
_{\lambda}\rightarrow \mathbb{C}$ such that $f=f_{\lambda} \circ
\pi_{\lambda}^{\mathcal{A}}$.
\end{enumerate}
\end{remark}

A continuous unital positive linear functional on $\mathcal{A}$ is called a 
\textit{state} on $\mathcal{A}$. A $\lambda $\textit{-state} on $\mathcal{A}$
is a unital $\lambda $-positive linear functional on $\mathcal{A}$.

\begin{remark}\label{222}
 Let $f:\mathcal{A}\rightarrow \mathbb{C}$ be a linear functional.

\begin{enumerate}
\item $f$ is a state on $\mathcal{A}$ if and only if there exists $\lambda
\in \Lambda $ such that $f$ is a $\lambda $-state.

\item $f$ is a $\lambda $-state on $\mathcal{A}$ for some $\lambda \in
\Lambda$ if and only if there exists a state $f_{\lambda }$ on $\mathcal{A}
_{\lambda }$ such that $f=f_{\lambda }\circ \pi _{\lambda }^{\mathcal{A}}$.
\end{enumerate}
\end{remark}

A state $f:\mathcal{A}\rightarrow \mathbb{C}$ is \textit{pure} if whenever $
g:\mathcal{A}\rightarrow \mathbb{C}$ is a positive linear functional such
that $f-g$ is a positive linear functional on $\mathcal{A}$, there
exists $\alpha \in  [0,1]$ such that $g=\alpha f$.

A $\lambda $-state on $\mathcal{A}$ is \textit{pure} if whenever $g:\mathcal{A}
\rightarrow \mathbb{C}$ is a $\lambda $-positive linear functional such that 
$f-g$ is a $\lambda $-positive linear functional on $\mathcal{A}$, there
exists $\alpha \in  [0,1]$ such that $g=\alpha f$.

\begin{remark}\label{2}
Let $f:\mathcal{A}\rightarrow \mathbb{C}$ be a state. Then

\begin{enumerate}
\item $f$ is a pure state on $\mathcal{A}$ if and only if $f$ is a pure $\lambda $-state for some $\lambda \in \Lambda$.

\item $f$ is a pure $\lambda $-state on $\mathcal{A}$ if and only if there
exists a pure state $f_{\lambda }$ on $\mathcal{A}_{\lambda }$ such that $
f=f_{\lambda }\circ \pi _{\lambda }^{\mathcal{A}}$.
\end{enumerate}
\end{remark}

A continuous $\ast $-representation of $\mathcal{A}$ on a Hilbert space $\mathcal{H}$ is a continuous $\ast $-morphism $\pi :\mathcal{A}\rightarrow B(
\mathcal{H})$. A continuous $*$-representation $\pi:\mathcal{A}\rightarrow B(\mathcal{H})$ is irreducible if the only closed subspaces of $\mathcal{H}$ invariant under $\pi(\mathcal{A})$ are trivial subspaces $\lbrace 0 \rbrace$ and $\mathcal{H}$ itself.

Let $f:\mathcal{A}\rightarrow \mathbb{C}$ be a $\lambda $-state on $\mathcal{A}$ and $f_{\lambda }:\mathcal{A}_{\lambda }\rightarrow \mathbb{C}$ be a
state on $\mathcal{A}_{\lambda }$ such that $f=f_{\lambda }\circ \pi
_{\lambda }^{\mathcal{A}}$ (Remark \ref{22} (2)). By $GNS$ construction \cite[Section 3.4]{MG}, there exists a cyclic $\ast $-representation $(\pi
_{f_{\lambda }},\mathcal{H}_{f},\xi _{f})\ $ of $\mathcal{A}_{\lambda }$
such that $$f_{\lambda }\left( \pi _{\lambda }^{\mathcal{A}}\left( a\right)
\right) =\left\langle \pi _{f_{\lambda }}\left( \pi _{\lambda }^{\mathcal{A}
}\left( a\right) \right) \xi _{f},\xi _{f}\right\rangle ,$$ for all $a\in 
\mathcal{A}$. Then $\pi _{f}=\pi _{f_{\lambda }}\circ \pi _{\lambda }^{\mathcal{A}}$ is a continuous $\ast $-representation of $\mathcal{A}$ on $\mathcal{H}_{f}$. Therefore, there exists a continuous cyclic $\ast$-representation $\left( \pi _{f},\mathcal{H}_{f},\xi _{f}\right) $ such that 
$f\left( a\right) =\left\langle \pi _{f}\left( a\right) \xi _{f},\xi
_{f}\right\rangle $ for all $a\in \mathcal{A}$. Moreover, if $f$ is pure,
then $f_{\lambda }$ is pure and so, the cyclic $\ast $-representation $(\pi
_{f_{\lambda }},\mathcal{H}_{f},\xi _{f})\ $associated to $f_{\lambda }$ is
irreducible. Consequently, the continuous cyclic $\ast $-representation $(\pi _{f},\mathcal{H}_{f},\xi _{f})$ associated to $f$ is irreducible.

\begin{lemma}\label{mm}
Let $\mathcal{B\subseteq A}$ be a locally $C^{\ast }$-subalgebra of $\mathcal{A}$ and let $p$ be a seminorm on $\mathcal{A}$ such that $p\left(
a\right) \leq p_{\lambda }\left( a\right) $ for all $a\in \mathcal{A}$ and $p\left( b\right) =p_{\lambda }\left( b\right) $ for all $b\in \mathcal{B}$
and for some $\lambda \in \Lambda $. If $f$ is a pure $\lambda $-state on $\mathcal{B}$, then it extends to a $\lambda $-state $\widetilde{f}$ on $\mathcal{A}$ such that 
\begin{equation*}
\left\vert \widetilde{f}\left( a\right) \right\vert \leq p\left( a\right), \ \text{for all 
}a\in \mathcal{A}.
\end{equation*}
\end{lemma}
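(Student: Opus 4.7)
The plan is to push the problem down to the $C^{\ast}$-algebra $\mathcal{A}_{\lambda }$ and then invoke the complex Hahn--Banach theorem. Using Remark \ref{2}(2), I would first write $f=f_{\lambda }\circ \pi _{\lambda }^{\mathcal{B}}$ for a pure state $f_{\lambda }$ on $\mathcal{B}_{\lambda }=\mathcal{B}/\lbrace b\in \mathcal{B}: p_{\lambda }(b)=0\rbrace $; since $\mathcal{B}$ is a locally $C^{\ast }$-subalgebra of $\mathcal{A}$ containing the unit, $\mathcal{B}_{\lambda }$ embeds canonically as a unital $C^{\ast }$-subalgebra of $\mathcal{A}_{\lambda }$.

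Next I would descend $p$ to a seminorm on $\mathcal{A}_{\lambda }$. From $p\leq p_{\lambda }$ the seminorm $p$ vanishes on $\mathcal{I}_{\lambda }=\lbrace a\in \mathcal{A}: p_{\lambda }(a)=0\rbrace $, so it factors as a well-defined seminorm $\widetilde{p}$ on $\mathcal{A}_{\lambda }$ via $\widetilde{p}(\pi _{\lambda }^{\mathcal{A}}(a))=p(a)$ for all $a\in \mathcal{A}$. The two hypotheses on $p$ translate to $\widetilde{p}\leq \left\Vert \cdot \right\Vert _{\mathcal{A}_{\lambda }}$ on $\mathcal{A}_{\lambda }$ and $\widetilde{p}=\left\Vert \cdot \right\Vert _{\mathcal{B}_{\lambda }}$ on $\mathcal{B}_{\lambda }$. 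Since $|f_{\lambda }(b')|\leq \left\Vert b'\right\Vert _{\mathcal{B}_{\lambda }}=\widetilde{p}(b')$ on $\mathcal{B}_{\lambda }$, Hahn--Banach yields a linear extension $\widetilde{f}_{\lambda }:\mathcal{A}_{\lambda }\rightarrow \mathbb{C}$ with $|\widetilde{f}_{\lambda }(x)|\leq \widetilde{p}(x)$ for all $x\in \mathcal{A}_{\lambda }$.

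To finish, I would observe that $\widetilde{f}_{\lambda }(1_{\mathcal{A}_{\lambda }})=f_{\lambda }(1_{\mathcal{B}_{\lambda }})=1$ and $\left\Vert \widetilde{f}_{\lambda }\right\Vert \leq 1$ (since $\widetilde{p}\leq \left\Vert \cdot \right\Vert _{\mathcal{A}_{\lambda }}$), which on the unital $C^{\ast }$-algebra $\mathcal{A}_{\lambda }$ forces $\widetilde{f}_{\lambda }$ to be a state. Then $\widetilde{f}:=\widetilde{f}_{\lambda }\circ \pi _{\lambda }^{\mathcal{A}}$ is, by Remark \ref{222}(2), a $\lambda $-state on $\mathcal{A}$; it extends $f$ because on $\mathcal{B}$ we have $\widetilde{f}=\widetilde{f}_{\lambda }\circ \pi _{\lambda }^{\mathcal{A}}|_{\mathcal{B}}=f_{\lambda }\circ \pi _{\lambda }^{\mathcal{B}}=f$, and it satisfies $|\widetilde{f}(a)|=|\widetilde{f}_{\lambda }(\pi _{\lambda }^{\mathcal{A}}(a))|\leq \widetilde{p}(\pi _{\lambda }^{\mathcal{A}}(a))=p(a)$. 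The point requiring the most care is the descent of $p$ through $\pi _{\lambda }^{\mathcal{A}}$ together with the correct identification of $\mathcal{B}_{\lambda }$ inside $\mathcal{A}_{\lambda }$; once these are in place, classical Hahn--Banach does all the work, and in fact the purity of $f$ is not used in the construction of the extension itself (it will presumably matter only when this lemma is applied, e.g. in conjunction with Proposition \ref{4}).
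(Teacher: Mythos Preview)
Your proposal is correct and follows essentially the same route as the paper's own proof: descend $f$ and $p$ to the quotient $C^{\ast}$-algebra $\mathcal{A}_{\lambda}$, apply Hahn--Banach there, and lift the extension back via $\pi_{\lambda}^{\mathcal{A}}$. If anything, your write-up is slightly more explicit than the paper's in justifying why the Hahn--Banach extension $\widetilde{f}_{\lambda}$ is automatically a state (unital and norm $\leq 1$ on a unital $C^{\ast}$-algebra), and your closing remark that purity of $f$ plays no role in the extension itself is accurate.
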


\begin{proof}
Since $f$ is a pure $\lambda $-state on $\mathcal{B}$, by Remark \ref{2}
(2), there exists a pure state $f_{\lambda }$ on $\mathcal{B}_{\lambda }$
such that $f= f_{\lambda }\circ \pi_{\lambda}^{\mathcal{A}}\restriction_{\mathcal{B}}$. On the other hand, since $p$ is a seminorm on $\mathcal{A}$ such that $p\left( a\right) \leq p_{\lambda }\left( a\right) $
for all $a\in \mathcal{A}$, by Remark \ref{1}, there exists a seminorm $p_{
\mathcal{A}_{\lambda }}$ on $\mathcal{A}_{\lambda }$ such that $p=$ $p_{\mathcal{A}_{\lambda }}\circ \pi _{\lambda }^{\mathcal{A}}$. Since 
\begin{equation*}
\left\vert f_{\lambda }\left( \pi _{\lambda }^{\mathcal{A}}\left( b\right)
\right) \right\vert \leq p\left( b\right) =p_{\mathcal{A}_{\lambda }}\left(
\pi _{\lambda }^{\mathcal{A}}\left( b\right) \right),
\end{equation*}
for all $b\in \mathcal{B}$, by Hahn-Banach theorem, there exists a state $\widetilde{f_{\lambda }}$ on $\mathcal{A}_{\lambda }$ such that $ \widetilde{f_{\lambda }}\restriction_{\mathcal{B}_{\lambda }}=f_{\lambda} $ and 
\begin{equation*}
\left\vert \widetilde{f_{\lambda }}\left( \pi _{\lambda }^{\mathcal{A}
}\left( a\right) \right) \right\vert \leq p_{\mathcal{A}_{\lambda }}\left(
\pi _{\lambda }^{\mathcal{A}}\left( a\right) \right),
\end{equation*}
for all $a\in \mathcal{A}$. Since 
\begin{equation*}
p_{\mathcal{A}_{\lambda }}\left( \pi _{\lambda }^{\mathcal{A}}\left(
a\right) \right) =p\left( a\right) \leq p_{\lambda }\left( a\right)=\Vert \pi_{\lambda}^{\mathcal{A}}(a) \Vert_{\mathcal{A}_{\lambda}},
\end{equation*}
for all $a\in \mathcal{A}$, there exists a $\lambda $-state $\widetilde{f}=
\widetilde{f_{\lambda }}\circ \pi _{\lambda }^{\mathcal{A}}$ on $\mathcal{A}$
such that $ \widetilde{f}\restriction_{\mathcal{B}}=f_{\lambda }$ and $\left\vert \widetilde{f}\left( a\right) \right\vert \leq p\left( a\right), $
for all $a\in \mathcal{A}$.
\end{proof}

For a local contractive $\ast $-morphism $\pi:\mathcal{A\rightarrow }
C^{\ast }(\mathcal{D}_{\mathcal{E}})$, we set $$\pi (\mathcal{B})^{\prime
}:=\left\lbrace T\in B(\mathcal{H}): T\pi \left( a\right) =\pi \left( a \right) T\restriction_{\mathcal{D}_{\mathcal{E}}}, (\forall) \ a\in \mathcal{A}\right\rbrace.$$

The following proposition plays a crucial role in showing the existence of a
minimal admissible $\mathcal{B}$-projection on an admissible injective
locally $C^{\ast}$-algebra.

\begin{proposition}\label{5}
Let $\mathcal{A}$ be a unital locally $C^{\ast}$-algebra whose topology is defined by the family of $C^{\ast}$-seminorms $\lbrace p_{\lambda
}\rbrace_{\lambda \in \Lambda }$, $\mathcal{B\subseteq A}$ be a locally $C^{\ast
} $-subalgebra which contains the unit of $\mathcal{A}$ and $\lbrace \widetilde{p}_{\lambda }\rbrace_{\lambda \in \Lambda }$ be a family of $\mathcal{B}$-seminorms
on $\mathcal{A}$. Then, there exist a quantized domain $\lbrace \mathcal{H},
\mathcal{E}=\lbrace \mathcal{H}_{\lambda} \rbrace_{\lambda \in \Lambda },\mathcal{D}_{
\mathcal{E}}\rbrace$, a quantized subdomain $\lbrace \mathcal{K},\mathcal{F}=\lbrace \mathcal{K}_{\lambda} \rbrace_{\lambda \in \Lambda },\mathcal{D}_{\mathcal{F}}\rbrace$ and a
unital local contractive $\ast $-morphism $\pi:\mathcal{A\rightarrow }
C^{\ast }(\mathcal{D}_{\mathcal{E}})$ such that

\begin{enumerate}
\item $\left\Vert \pi \left( b\right) \right\Vert _{\lambda }=\widetilde{p}
_{\lambda }\left( b\right) $, for all $b\in \mathcal{B}$ and for each $
\lambda \in \Lambda $;

\item $\left[ \pi \left( \mathcal{B}\right) \mathcal{H}_{\lambda }\right] =
\mathcal{K}_{\lambda }$, where $\left[ \pi \left( \mathcal{B}\right) 
\mathcal{H}_{\lambda }\right] $ is the closed subspace of $\mathcal{H}
_{\lambda }$ generated by $\lbrace \pi \left( b\right) \xi : b\in \mathcal{B}, \xi
\in \mathcal{H}_{\lambda }\rbrace$, for each $\lambda \in \Lambda .$
\end{enumerate}

Moreover, if $\mathcal{Q}$ is the projection of $\mathcal{H}$ onto $\mathcal{K}
$, then $\mathcal{Q}\restriction_{\mathcal{D}_{\mathcal{E}}}\in \pi (
\mathcal{B})^{\prime }\cap C^{\ast }(\mathcal{D}_{\mathcal{E}})$ and
\begin{equation*}
\left\Vert \mathcal{Q}\pi \left( a\right)  \mathcal{Q}\restriction_{
\mathcal{D}_{\mathcal{E}}}\right\Vert _{\lambda }\leq \widetilde{p}_{\lambda
}\left( a\right),
\end{equation*}
for all $a\in \mathcal{A}$ and for all $\lambda \in \Lambda $.
\end{proposition}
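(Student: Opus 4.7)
The plan is to reduce the statement to Hamana's classical theorem at each level $\lambda \in \Lambda$ of the Arens-Michael decomposition of $\mathcal{A}$, and then glue the resulting Hilbert space representations via a direct sum indexed by $\Lambda$ to produce a single representation on a quantized domain.

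First, I would transfer the data from $\mathcal{A}$ down to each quotient $C^*$-algebra $\mathcal{A}_\lambda$. By Remark \ref{1}, the $\mathcal{B}$-seminorm $\widetilde{p}_\lambda$ descends to a $\mathcal{B}_\lambda$-seminorm $\widetilde{p}_{\lambda, \mathcal{A}_\lambda}$ on $\mathcal{A}_\lambda$, which is precisely a $\mathcal{B}_\lambda$-seminorm in the classical sense of Hamana~\cite{H}. Applying Hamana's classical construction to the pair $(\mathcal{A}_\lambda, \mathcal{B}_\lambda)$ with seminorm $\widetilde{p}_{\lambda, \mathcal{A}_\lambda}$ --- this proceeds by extending each pure state of $\mathcal{B}_\lambda$ to a state of $\mathcal{A}_\lambda$ dominated by the seminorm via Hahn--Banach (whose locally-convex counterpart is already Lemma \ref{mm}) and by forming the direct sum of the resulting GNS representations --- I obtain a unital $*$-representation $\sigma_\lambda: \mathcal{A}_\lambda \to B(\mathcal{M}_\lambda)$, a closed subspace $\mathcal{N}_\lambda \subseteq \mathcal{M}_\lambda$ spanned by the GNS cyclic vectors, and the orthogonal projection $E_\lambda$ onto $\mathcal{N}_\lambda$, satisfying $\|\sigma_\lambda(\pi_\lambda^\mathcal{A}(b))\| = \widetilde{p}_\lambda(b)$ for $b \in \mathcal{B}$, $E_\lambda \in \sigma_\lambda(\mathcal{B}_\lambda)'$, and $\|E_\lambda\sigma_\lambda(\pi_\lambda^\mathcal{A}(a))E_\lambda\| \leq \widetilde{p}_\lambda(a)$ for $a \in \mathcal{A}$.

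Next I would assemble the quantized domain. Set
\begin{equation*}
\mathcal{H} = \bigoplus_{\mu \in \Lambda} \mathcal{M}_\mu, \qquad \mathcal{H}_\lambda = \bigoplus_{\mu \leq \lambda} \mathcal{M}_\mu, \qquad \mathcal{D}_\mathcal{E} = \bigcup_{\lambda \in \Lambda} \mathcal{H}_\lambda,
\end{equation*}
so that directedness of $(\Lambda, \leq)$ yields a quantized domain $\{\mathcal{H};\{\mathcal{H}_\lambda\}_{\lambda}; \mathcal{D}_\mathcal{E}\}$. Define $\pi: \mathcal{A} \to C^*(\mathcal{D}_\mathcal{E})$ by requiring $\pi(a)|_{\mathcal{M}_\mu} = \sigma_\mu(\pi_\mu^\mathcal{A}(a))$ and extending by linearity; since each $\sigma_\mu$ factors through $\mathcal{A}_\mu$ with $\|\sigma_\mu(\pi_\mu^\mathcal{A}(a))\| \leq p_\mu(a) \leq p_\lambda(a)$ whenever $\mu \leq \lambda$, each $\pi(a)$ preserves $\mathcal{H}_\lambda$ and its orthogonal complement with $\|\pi(a)\|_\lambda \leq p_\lambda(a)$, so $\pi$ is a well-defined unital local contractive $*$-morphism. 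Correspondingly set $\mathcal{K}_\lambda = \bigoplus_{\mu \leq \lambda}\mathcal{N}_\mu$, $\mathcal{K} = \bigoplus_\mu \mathcal{N}_\mu$, $\mathcal{D}_\mathcal{F} = \bigcup_\lambda \mathcal{K}_\lambda$, and $\mathcal{Q} = \bigoplus_\mu E_\mu$, producing the required quantized subdomain and the projection onto $\mathcal{K}$. The four required properties then reduce to level-wise statements combined with the upward filtering of $\{\widetilde{p}_\lambda\}$: for $b \in \mathcal{B}$,
\begin{equation*}
\|\pi(b)\|_\lambda = \sup_{\mu \leq \lambda}\|\sigma_\mu(\pi_\mu^\mathcal{A}(b))\| = \sup_{\mu \leq \lambda}\widetilde{p}_\mu(b) = \widetilde{p}_\lambda(b);
\end{equation*}
the identification for $\mathcal{K}_\lambda$ aggregates the corresponding identification at each level $\mu \leq \lambda$; $\mathcal{Q}|_{\mathcal{D}_\mathcal{E}}$ lies in $\pi(\mathcal{B})' \cap C^*(\mathcal{D}_\mathcal{E})$ because each $E_\mu$ commutes with $\sigma_\mu(\mathcal{B}_\mu)$; and
\begin{equation*}
\|\mathcal{Q}\pi(a)\mathcal{Q}|_{\mathcal{D}_\mathcal{E}}\|_\lambda = \sup_{\mu \leq \lambda}\|E_\mu\sigma_\mu(\pi_\mu^\mathcal{A}(a))E_\mu\| \leq \sup_{\mu \leq \lambda}\widetilde{p}_\mu(a) = \widetilde{p}_\lambda(a).
\end{equation*}

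The chief obstacle is executing Hamana's step cleanly inside each $\mathcal{A}_\lambda$: one must choose, for every pure state $f$ of $\mathcal{B}_\lambda$, an extension $\widetilde{f}$ to $\mathcal{A}_\lambda$ dominated by $\widetilde{p}_{\lambda,\mathcal{A}_\lambda}$ and verify that, after passing to the direct sum of GNS representations, compression by the projection onto the span of the GNS cyclic vectors recovers $\widetilde{p}_{\lambda,\mathcal{A}_\lambda}$ on all of $\mathcal{A}_\lambda$ --- the estimates here rest on the bimodule inequality $\widetilde{p}_\lambda(bac) \leq p_\lambda(b)\widetilde{p}_\lambda(a)p_\lambda(c)$ from Definition \ref{m}. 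Once this $C^*$-algebraic step is in place, the direct-sum gluing is formal: no connecting morphisms between the different $\sigma_\lambda$ need to be established, since each $\sigma_\mu$ acts independently on its own summand $\mathcal{M}_\mu$.
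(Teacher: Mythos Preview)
Your proposal is correct and follows essentially the same route as the paper. The paper carries out the Hamana construction directly with pure $\lambda$-states on $\mathcal{B}$ (which, via Remark \ref{2}, are the same as pure states on $\mathcal{B}_\lambda$), builds the direct sum of GNS representations to obtain $\widetilde{\pi}_\lambda:\mathcal{A}\to B(H_\lambda)$, and then forms $\mathcal{H}_\lambda=\bigoplus_{\mu\le\lambda}H_\mu$ exactly as you do; your $\sigma_\lambda\circ\pi_\lambda^{\mathcal{A}}$ is their $\widetilde{\pi}_\lambda$, and the final gluing and norm estimates are identical. One small imprecision: your $\mathcal{N}_\lambda$ should be described as the closed $\sigma_\lambda(\mathcal{B}_\lambda)$-invariant subspace generated by the GNS cyclic vectors (i.e.\ $\bigoplus_f[\sigma_\lambda(\mathcal{B}_\lambda)\xi_f]$), not merely the span of the cyclic vectors themselves --- this is what makes $E_\lambda\in\sigma_\lambda(\mathcal{B}_\lambda)'$ hold and is what Hamana's construction actually produces.
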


\begin{proof}
Let $\lambda \in \Lambda $ and $\mathfrak{s}_{\lambda }$ be the set of all
pure $\lambda $-states on $\mathcal{B}$. Let $f\in \mathfrak{s}_{\lambda }$.$
\ $Since $\widetilde{p}_{\lambda }$ is a seminorm on $\mathcal{A}$ such that 
$\widetilde{p}_{\lambda }\left( a\right) \leq p_{\lambda }\left( a\right) $
for all $a\in \mathcal{A}$, and $\widetilde{p}_{\lambda }\left( b\right)
=p_{\lambda }\left( b\right) $ for all $b\in \mathcal{B}$, by Lemma \ref{mm}
, $f$ extends to a $\lambda $-state $\widetilde{f}$ on $\mathcal{A}$ such
that $\left\vert \widetilde{f}\left( a\right) \right\vert \leq \widetilde{p}
_{\lambda }\left( a\right) $ for all $a\in \mathcal{A}$. Let $\left( \pi _{
\widetilde{f}},\mathcal{H}_{\widetilde{f}},\xi _{\widetilde{f}}\right) $ be
the continuous cyclic $\ast $-representation of $\mathcal{A}$ associated to $
\widetilde{f}$ and $\mathcal{K}_{f}=[\pi _{\widetilde{f}}(\mathcal{B})\xi _{
\widetilde{f}}]$, the closed subspace of $\mathcal{H}_{\widetilde{f}}$
generated by $\lbrace \pi _{\widetilde{f}}(b)\xi _{\widetilde{f}} : b\in \mathcal{B}
\rbrace$. Then $\left( \pi_{\widetilde{f}}\restriction_{\mathcal{B}}, K_{\widetilde{f}}, \xi _{\widetilde{f}}\right) $ is a continuous cyclic $\ast $-representation of $\mathcal{B}$ associated to $f$, and since $f$ is a pure $\lambda $-state on $\mathcal{B}$, $ \pi_{\widetilde{f}}\restriction_{\mathcal{B}}$ is irreducible.

Set $H_{\lambda }=\bigoplus\limits_{f\in \mathfrak{s}_{\lambda }}\mathcal{H}
_{\widetilde{f}},$ $K_{\lambda }=\bigoplus\limits_{f\in \mathfrak{s}
_{\lambda }}\mathcal{K}_{f}$ and let $Q_{\lambda }$ be the orthogonal
projection of $H_{\lambda }$ on $K_{\lambda }$. Then, the linear map $
\widetilde{\pi }_{\lambda }:\mathcal{A}\rightarrow B(H_{\lambda })$ defined
by
\begin{equation*}
\widetilde{\pi }_{\lambda }(a)\left( \left( \eta _{\widetilde{f}}\right)
_{f\in \mathfrak{s}_{\lambda }}\right) =\left( \pi _{\widetilde{f}}\left(
a\right) \eta _{\widetilde{f}}\right) _{f\in \mathfrak{s}_{\lambda }}
\end{equation*}
is a continuous $\ast $-representation of $\mathcal{A}\ $and $\left[ \pi
_{\lambda }\left( \mathcal{B}\right) H_{\lambda }\right] =K_{\lambda }$.
Moreover, 
\begin{equation*}
\left\Vert \widetilde{\pi }_{\lambda }(a)\right\Vert \leq p_{\lambda }\left(
a\right),
\end{equation*}
for all $a\in \mathcal{A}$, and since $ \widetilde{\pi }_{\lambda
}\restriction_{\mathcal{B}}$ is the universal representation of $\mathcal{B}$, 
\begin{equation*}
\left\Vert \widetilde{\pi }_{\lambda }(b)\right\Vert =p_{\lambda }\left(
b\right) \ =\widetilde{p}_{\lambda }\left( b\right),
\end{equation*}
for all $b\in \mathcal{B}$. Clearly, $Q_{\lambda }\in \widetilde{\pi }
_{\lambda }(\mathcal{B})^{\prime }$.

Let $a\in \mathcal{A}$ and $\left( \pi _{\widetilde{f}}(b_{f})\xi _{
\widetilde{f}}\right) _{f\in \mathfrak{s}_{\lambda }}\in K_{\lambda }$.
Since $\widetilde{\pi }_{\lambda }(\mathcal{B})$ acts irreducibly on $
K_{f},f\in \mathfrak{s}_{\lambda }$, we may assume that $\left\Vert \pi _{
\widetilde{f}}(b_{f})(\xi _{f})\right\Vert =p_{\lambda }(b_{f})$, for all $
f\in \mathfrak{s}_{\lambda }$. Then 

\begin{align*}
&\left\vert \left\langle Q_{\lambda }\widetilde{\pi }_{\lambda}(a)Q_{\lambda }\left( \left( \pi _{\widetilde{f}}(b_{f})\xi _{\widetilde{f}}\right) _{f\in \mathfrak{s}_{\lambda }}\right) ,\left( \pi _{\widetilde{f}}(b_{f})\xi _{\widetilde{f}}\right) _{f\in \mathfrak{s}_{\lambda }}\right\rangle \right\vert= \\
&= \left\vert \left\langle \left( \pi _{\widetilde{f}}(ab_{f})\xi _{\widetilde{f}}\right) _{f\in \mathfrak{s}_{\lambda }},\left( \pi _{\widetilde{f}}(b_{f})\xi _{\widetilde{f}}\right) _{f\in \mathfrak{s}_{\lambda }}\right\rangle \right\vert \\
&= \left\vert \left\langle \left( \pi _{\widetilde{f}}(b_{f}^{\ast}ab_{f})\xi _{\widetilde{f}}\right) _{f\in \mathfrak{s}_{\lambda }},\left( \xi _{\widetilde{f}}\right) _{f\in \mathfrak{s}_{\lambda }}\right\rangle \right\vert \\
&\leq \sum\limits_{f\in \mathfrak{s}_{\lambda }}\left\vert \widetilde{f}\left( b_{f}^{\ast }ab_{f}\right) \right\vert\leq \sum\limits_{f\in \mathfrak{s}_{\lambda }}\widetilde{p}_{\lambda }\left( b_{f}^{\ast}ab_{f}\right)\leq \widetilde{p}_{\lambda }\left( a\right) \sum\limits_{f\in \mathfrak{s}_{\lambda }}p_{\lambda }\left( b_{f}\right) ^{2} \\
&= \widetilde{p}_{\lambda }\left( a\right) \sum\limits_{f\in \mathfrak{s}_{\lambda }}\left\Vert \pi _{\widetilde{f}}\left( b_{f}\right) (\xi _{f})\right\Vert ^{2} = \widetilde{p}_{\lambda }\left( a\right) \left\Vert \left( \pi _{\widetilde{f}}(b_{f})\xi _{\widetilde{f}}\right) _{f\in \mathfrak{s}_{\lambda }}\right\Vert ^{2}. 
\end{align*}


Therefore, 
\begin{equation*}
\left\Vert Q_{\lambda }\widetilde{\pi }_{\lambda }\left( a\right) Q_{\lambda
}\right\Vert \leq \widetilde{p}_{\lambda }\left( a\right), (\forall) \ a\in \mathcal{A}.
\end{equation*}

Let $\mathcal{H=}\bigoplus\limits_{\lambda \in \Lambda }H_{\lambda }$ and $
\mathcal{K=}\bigoplus\limits_{\lambda \in \Lambda }K_{\lambda }$. Then $\lbrace
\mathcal{H}, \mathcal{E} =\lbrace \mathcal{H}_{\lambda} \rbrace_{\lambda \in \Lambda },\mathcal{D}_{\mathcal{E}}\rbrace$, where $\mathcal{H}_{\lambda }=\bigoplus\limits_{\mu \leq \lambda }H_{\mu }$, is a quantized
domain in $\mathcal{H}$ and $\lbrace \mathcal{K}, \mathcal{F} =\lbrace \mathcal{K}_{\lambda} \rbrace_{\lambda \in \Lambda }, \mathcal{D}_{
\mathcal{F}}\rbrace$, where $\mathcal{K}_{\lambda }=\bigoplus\limits_{\mu \leq
\lambda }K_{\mu }$, is a quantized domain in $\mathcal{K}$. Moreover, $\lbrace
\mathcal{K}, \mathcal{F} =\lbrace \mathcal{K}_{\lambda}  \rbrace_{\lambda \in \Lambda },\mathcal{D}_{\mathcal{F}}\rbrace$ is a quantized
subdomain in $\lbrace \mathcal{H}, \mathcal{E} =\lbrace \mathcal{H}_{\lambda}  \rbrace_{\lambda \in \Lambda },\mathcal{D}_{\mathcal{E}}\rbrace$.
For each $\lambda \in \Lambda $, consider the linear map $\pi _{\lambda }:\mathcal{A}\rightarrow B(\mathcal{H}_{\lambda })$ defined by 
\begin{equation*}
\pi _{\lambda }\left( a\right) \left( \left( \xi _{\mu }\right) _{\mu \leq
\lambda }\right) =\left( \widetilde{\pi }_{\mu }\left( a\right) \xi _{\mu
}\right) _{\mu \leq \lambda }.
\end{equation*}
Clearly, $\pi _{\lambda }$ is a $\ast $-morphism. Moreover, 
\begin{equation*}
\left\Vert \pi _{\lambda }\left( a\right) \right\Vert =\sup \lbrace \left\Vert 
\widetilde{\pi }_{\mu }\left( a\right) \right\Vert : \mu \leq \lambda \rbrace \leq
\sup \lbrace p_{\mu }\left( a\right) : \mu \leq \lambda \rbrace =p_{\lambda }\left(
a\right)
\end{equation*}
for all $a\in \mathcal{A}$, 
\begin{equation*}
\left\Vert \pi _{\lambda }\left( b\right) \right\Vert =\sup \lbrace \left\Vert 
\widetilde{\pi }_{\mu }\left( b\right) \right\Vert : \mu \leq \lambda \rbrace =\sup
\lbrace p_{\mu }\left( b\right) : \mu \leq \lambda \rbrace =p_{\lambda }\left( b\right) =
\widetilde{p}_{\lambda }\left( b\right)
\end{equation*}
for each $b\in \mathcal{B}$,$\ $and $\left[ \pi _{\lambda }\left( \mathcal{B}
\right) \mathcal{H}_{\lambda }\right] =\mathcal{K}_{\lambda }$. If $\mathcal{Q}_{\lambda }$ is the projection of $\mathcal{H}_{\lambda }$ on $\mathcal{K}
_{\lambda },$ then $\mathcal{Q}_{\lambda }\in \pi _{\lambda }(\mathcal{B}
)^{\prime }$ and 
\begin{eqnarray*}
\left\Vert \mathcal{Q}_{\lambda }\pi _{\lambda }\left( a\right) \mathcal{Q}
_{\lambda }\right\Vert &=&\sup \left\lbrace \left\Vert Q_{\mu }\pi _{\mu }\left(
a\right) Q_{\mu }\right\Vert :\mu \leq \lambda \right\rbrace \\
&\leq &\sup \left\lbrace \widetilde{p}_{\mu }\left( a\right) : \mu \leq \lambda
\right\rbrace =\widetilde{p}_{\lambda }\left( a\right),
\end{eqnarray*}
for all $a\in \mathcal{A}$.

Let $a\in \mathcal{A}$. Consider the linear map $\pi \left( a\right):\mathcal{D}_{\mathcal{E}}\rightarrow \mathcal{D}_{\mathcal{E}}$ defined by 
\begin{equation*}
\pi \left( a\right) \xi =\pi _{\lambda }\left( a\right) \xi \text{ if }\xi
\in \mathcal{H}_{\lambda }\text{.}
\end{equation*}
It is easy to check that it is well-defined, $\pi \left( a\right) \in
C^{\ast }(\mathcal{D}_{\mathcal{E}})$ and $\pi \left( a\right) ^{\ast }=\pi
\left( a^{\ast }\right) $. In this way, we obtain a $\ast $-morphism $\pi: \mathcal{A}\rightarrow C^{\ast }(\mathcal{D}_{\mathcal{E}})$. Moreover,
for each $\lambda \in \Lambda $,
\begin{equation*}
\left\Vert \pi \left( a\right) \right\Vert _{\lambda }=\left\Vert \pi
_{\lambda }\left( a\right) \right\Vert \leq p_{\lambda }\left( a\right),
\end{equation*}
for all $a\in \mathcal{A}$ and 
\begin{equation*}
\left\Vert \pi \left( b\right) \right\Vert _{\lambda }=\left\Vert \pi
_{\lambda }\left( b\right) \right\Vert =p_{\lambda }\left( b\right) =
\widetilde{p}_{\lambda }\left( b\right),
\end{equation*}
for all $b\in \mathcal{B}$. Consequently, $\pi $ is a local contractive $
\ast $-morphism and $\left[ \pi \left( \mathcal{B}\right) \mathcal{H}
_{\lambda }\right] =\left[ \pi _{\lambda }\left( \mathcal{B}\right) \mathcal{H}_{\lambda }\right] =\mathcal{K}_{\lambda }$, for all $\lambda \in \Lambda $.

Let $\mathcal{Q}$ be the projection of $\mathcal{H}$ on $\mathcal{K}$.
Clearly, for each $\lambda \in \Lambda , \mathcal{Q}\restriction_{\mathcal{H}_{\lambda }}=\mathcal{Q}_{\lambda }$, and so $ \mathcal{Q}
\restriction_{\mathcal{D}_{\mathcal{E}}}\in C^{\ast }(\mathcal{D}_{\mathcal{E
}})\cap \pi (\mathcal{B})^{\prime }$. Then, for each $\lambda \in \Lambda 
$ and for each $a\in \mathcal{A}$, we have 
\begin{equation*}
\left\Vert \mathcal{Q}\pi \left( a\right)  \mathcal{Q}\restriction_{\mathcal{D}_{\mathcal{E}}}\right\Vert _{\lambda }=\left\Vert \mathcal{Q}
_{\lambda }\pi _{\lambda }\left( a\right) \mathcal{Q}_{\lambda }\right\Vert
\leq \widetilde{p}_{\lambda }\left( a\right).
\end{equation*}
\end{proof}

\subsection{Minimal projections}

Let $\mathcal{A}$ be a unital locally $C^{\ast}$-algebra whose topology is
defined by the family of $C^{\ast}$-seminorms $\lbrace p_{\lambda }\rbrace_{\lambda
\in \Lambda }$ and let $\mathcal{B\subseteq A}$ be a locally $C^{\ast}$-subalgebra which contains the unit of $\mathcal{A}$.

\begin{definition}
A linear map $\varphi:\mathcal{A}\rightarrow \mathcal{A}$ is a $\mathcal{B}$-projection (respectively, admissible $\mathcal{B}$-projection) if it is a projection (respectively, an admissible projection) and $\varphi \left(
b\right) =b$, for all $b\in \mathcal{B}$.
\end{definition}

\begin{definition}
If $\varphi $ and $\psi $ are two $\mathcal{B}$-projections (respectively,
admissible $\mathcal{B}$-projections) on $\mathcal{A}$, we say that $\varphi
\prec \psi $ if $\psi \circ \varphi =\varphi \circ \psi =\varphi $.
\end{definition}

This relation is a partial ordering relation on the collection of all $
\mathcal{B}$-projections (respectively, admissible $\mathcal{B}$-projections) on $\mathcal{A}$.

\begin{theorem}\label{Help}
Let $\mathcal{A}$ be a unital Fr\'{e}chet locally $C^{\ast}$-algebra whose topology is defined by the family of $C^{\ast}$-seminorms $
\lbrace p_{n}\rbrace_{n\geq 1}$ and $\mathcal{B\subseteq A}$ be a locally $C^{\ast }$
-subalgebra which contains the unit of $\mathcal{A}$. If $\mathcal{A}$ is admissible injective, then there exists a minimal admissible $\mathcal{B}$-projection on $\mathcal{A}$.
\end{theorem}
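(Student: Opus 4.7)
The plan is to apply Zorn's lemma to the collection $\mathcal{P}$ of admissible $\mathcal{B}$-projections on $\mathcal{A}$ partially ordered by $\prec$. One must check that $\mathcal{P}$ is nonempty and that every descending chain admits a lower bound; a minimal element then follows.

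\emph{Nonemptiness.} Let $\{p_{n}^{\min}\}_{n\geq 1}$ be the minimal family of $\mathcal{B}$-seminorms on $\mathcal{A}$ from Proposition~\ref{4}. Applying Proposition~\ref{5} with $\widetilde{p}_{n}=p_{n}^{\min}$ produces a local contractive $\ast$-morphism $\pi:\mathcal{A}\to C^{\ast}(\mathcal{D}_{\mathcal{E}})$, a quantized subdomain $\mathcal{D}_{\mathcal{F}}$, and a projection $\mathcal{Q}\restriction_{\mathcal{D}_{\mathcal{E}}}\in\pi(\mathcal{B})'\cap C^{\ast}(\mathcal{D}_{\mathcal{E}})$. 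Define $\rho:\mathcal{A}\to C^{\ast}(\mathcal{D}_{\mathcal{F}})$ by $\rho(a)=\mathcal{Q}\pi(a)\mathcal{Q}\restriction_{\mathcal{D}_{\mathcal{F}}}$; this is unital admissible local $\mathcal{CP}$, and $\rho\restriction_{\mathcal{B}}$ is a local isometric $\ast$-morphism onto $\rho(\mathcal{B})\subseteq C^{\ast}(\mathcal{D}_{\mathcal{F}})$. Identifying $\rho(\mathcal{B})\cong\mathcal{B}\subseteq\mathcal{A}$, admissible injectivity extends the inclusion $\rho(\mathcal{B})\hookrightarrow\mathcal{A}$ to a unital admissible local $\mathcal{CP}$-map $E:C^{\ast}(\mathcal{D}_{\mathcal{F}})\to\mathcal{A}$. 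Set $\Phi=E\circ\rho$; then $\Phi\restriction_{\mathcal{B}}=\mathrm{id}_{\mathcal{B}}$, and $p_{n}(\Phi(a))\leq\|\rho(a)\|_{n}\leq p_{n}^{\min}(a)$. By Lemma~\ref{help}(1) and minimality, $p_{n}\circ\Phi=p_{n}^{\min}$ for every $n$. To upgrade $\Phi$ to an idempotent, take Cesàro averages $\Phi_{k}=\frac{1}{k}\sum_{j=1}^{k}\Phi^{j}$ (each again in $\mathfrak{M}(\mathcal{B},\mathcal{A})$ attaining $\{p_{n}^{\min}\}$) and extract a weak cluster point $\varphi_{0}$ via the Arens-Michael decomposition and Banach-Alaoglu at each level $\mathcal{A}_{n}$; the Cesàro mean-ergodic property gives $\Phi\circ\varphi_{0}=\varphi_{0}$, hence $\varphi_{0}\circ\varphi_{0}=\varphi_{0}$, placing $\varphi_{0}$ in $\mathcal{P}$.

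\emph{Lower bounds for descending chains.} Let $\{\varphi_{\alpha}\}_{\alpha\in I}$ be a descending chain in $(\mathcal{P},\prec)$. Since $p_{n}\circ\varphi_{\alpha}\leq p_{n}$, the net $\{\varphi_{\alpha}(a)\}$ is bounded in each $\mathcal{A}_{n}$, so by Banach-Alaoglu applied level-wise we extract a weak cluster point $\varphi:\mathcal{A}\to\mathcal{A}$; weak lower semicontinuity of seminorms and the closedness of the positive cone show $\varphi$ is again unital admissible local $\mathcal{CP}$ and fixes $\mathcal{B}$. For $\gamma$ sufficiently far down the chain, $\varphi_{\gamma}\prec\varphi_{\alpha}$, so $\varphi_{\alpha}\circ\varphi_{\gamma}=\varphi_{\gamma}\circ\varphi_{\alpha}=\varphi_{\gamma}$; passing to the cluster limit in $\gamma$ gives $\varphi_{\alpha}\circ\varphi=\varphi\circ\varphi_{\alpha}=\varphi$ for every $\alpha$, and one further limit yields $\varphi\circ\varphi=\varphi$. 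Thus $\varphi\in\mathcal{P}$ with $\varphi\prec\varphi_{\alpha}$ for every $\alpha$, and Zorn's lemma (in its dual form) produces a minimal admissible $\mathcal{B}$-projection on $\mathcal{A}$.

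\emph{Main obstacle.} The critical technical step is the compactness/weak-continuity machinery underlying both the upgrade from $\Phi$ to $\varphi_{0}$ and the construction of lower bounds. In the Fréchet setting, weak cluster points must be taken via the Arens-Michael decomposition, and admissibility (equality of seminorm index sets on domain and codomain, not just boundedness) must be verified to survive weak limits. Because composition of maps is only weakly continuous on bounded subsets, these continuity assertions have to be checked carefully at each level $\mathcal{A}_{n}$, leveraging Remark~\ref{a} to identify admissible local $\mathcal{CP}$-maps with compatible inverse systems of classical $\mathcal{CP}$-maps on the $C^{\ast}$-algebras $\mathcal{A}_{n}$, so that both Banach-Alaoglu and the ergodic-type Cesàro argument import cleanly from the $C^{\ast}$-algebra setting.
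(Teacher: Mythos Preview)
Your construction of $\Phi=E\circ\rho$ coincides with the paper's construction of its map $\varphi$, but from that point on the approaches diverge, and the paper's route completely sidesteps the obstacle you flag.

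The paper uses neither Zorn's lemma nor any weak-compactness argument. The single map $\varphi$ obtained from the minimal family $\{p_n^{\min}\}$ is shown \emph{directly} to be idempotent and minimal, both facts being consequences of seminorm minimality. For idempotence, Lemma~\ref{help}(2) is the key: the Ces\`aro--limsup seminorms
\[
\widetilde p_n(a)=\limsup_m\tfrac{1}{m}\,p_n\bigl(\varphi(a)+\cdots+\varphi^m(a)\bigr)
\]
form a family of $\mathcal{B}$-seminorms with $\{\widetilde p_n\}\le\{p_n\circ\varphi\}\le\{p_n^{\min}\}$, so minimality forces $p_n\circ\varphi=\widetilde p_n$; then
\[
p_n\bigl(\varphi(a)-\varphi^2(a)\bigr)=(p_n\circ\varphi)(a-\varphi(a))=\widetilde p_n\bigl(a-\varphi(a)\bigr)=\limsup_m\tfrac{1}{m}\,p_n\bigl(\varphi(a)-\varphi^{m+1}(a)\bigr)=0.
\]
For minimality, if $\psi\prec\varphi$ then $\{p_n\circ\psi\}\le\{p_n\circ\varphi\}=\{p_n^{\min}\}$, so $p_n\circ\psi=p_n\circ\varphi$, whence $\ker\psi=\ker\varphi$; since $\psi(\psi(a)-\varphi(a))=0$ one gets $\psi(a)-\varphi(a)\in\ker\varphi$, and applying $\varphi$ yields $\psi=\varphi$. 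Thus the Ces\`aro idea is used at the level of \emph{seminorms}, not to produce a limit map.

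Your weak-limit programme, by contrast, has real gaps beyond what you acknowledge. First, ``Banach--Alaoglu at each level $\mathcal{A}_n$'' is not available: $\mathcal{A}_n$ is an arbitrary unital $C^{\ast}$-algebra with no predual assumed, so bounded nets of elements (or of $\mathcal{CP}$-maps into $\mathcal{A}_n$) need not have weak cluster points landing in $\mathcal{A}_n$. Second, even granting cluster points, the passage from $\Phi\circ\Phi_k-\Phi_k\to 0$ to $\Phi\circ\varphi_0=\varphi_0$ requires left-composition by $\Phi$ to be continuous for your topology, which fails in general; you only get $\varphi_0\circ\Phi=\varphi_0$, and the step from there to $\varphi_0^2=\varphi_0$ runs into the same problem on the other side. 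Third, extracting cluster points level by level and then reassembling them into a single admissible map on $\mathcal{A}$ requires a \emph{coherent} choice across all $n$, which independent subnet extractions do not provide. The same three issues recur verbatim in your chain lower-bound argument. The paper's seminorm argument (which is Hamana's original strategy transported to the Fr\'echet setting) is designed precisely to avoid all of this.
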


\begin{proof}
By Proposition \ref{4}, there exists a minimal family of $\mathcal{B}$
-seminorms $\lbrace p_{n}^{\min }\rbrace_{n\geq 1}$ on $\mathcal{A}$, and by
Proposition \ref{5}, there exist a quantized domain $\lbrace \mathcal{H}, \mathcal{E}=\lbrace \mathcal{H}_{n} \rbrace_{n\geq 1}, \mathcal{D}_{\mathcal{E}}\rbrace$, a quantized
subdomain $\lbrace \mathcal{K}, \mathcal{F}= \lbrace  \mathcal{K}_{n} \rbrace_{n \geq 1},\mathcal{D}_{\mathcal{F}}\rbrace$ and a local contractive $\ast $-morphism $\pi:\mathcal{A\rightarrow }C^{\ast }(\mathcal{D}_{\mathcal{E}})$ such that for each $
n\geq 1$, 
\begin{equation*}
\left\Vert \pi \left( b\right) \right\Vert _{n}=p_{n}^{\min }\left( b\right),
\end{equation*}
for all $b\in \mathcal{B},$ and  
\begin{equation*}
\left\Vert \mathcal{Q}\pi \left( a\right)  \mathcal{Q}\restriction_{
\mathcal{D}_{\mathcal{E}}}\right\Vert _{n}\leq p_{n}^{\min }\left( a\right),
\end{equation*}
for all $a\in \mathcal{A}$, where $\mathcal{Q}$ is the projection of $
\mathcal{H}$ on $\mathcal{K}$ and $\left[ \pi \left( \mathcal{B}\right) 
\mathcal{H}_{n}\right] =\mathcal{K}_{n}$. Moreover, $ \mathcal{Q}
\restriction_{\mathcal{D}_{\mathcal{E}}}\in \pi \left( \mathcal{B}\right)
^{\prime }\cap C^{\ast }(\mathcal{D}_{\mathcal{E}})$ and $\mathcal{Q}\pi
\left( \mathcal{B}\right) $ is a unital locally $C^{\ast }$-subalgebra of $
C^{\ast }(\mathcal{D}_{\mathcal{E}})$. Since, for each $n\geq 1$,
\begin{equation*}
\left\Vert \mathcal{Q}\pi \left( b\right) \right\Vert _{n}=\left\Vert \pi
\left( b\right) \right\Vert _{n}=p_{n}^{\min }\left( b\right) =p_{n}\left(
b\right),
\end{equation*}
for all $b\in \mathcal{B}$, the map $\Phi:\mathcal{Q}\pi \left( \mathcal{B}
\right) \rightarrow \mathcal{B}$ defined by $\Phi \left( \mathcal{Q}\pi
\left( b\right) \right) =b$ is a unital local isometric $\ast $-isomorphism
and so, it is a unital admissible local $\mathcal{CP}$-map. We can regard $
\Phi $ as a unital admissible local $\mathcal{CP}$-map from $\mathcal{Q}\pi
\left( \mathcal{B}\right) $ to $\mathcal{A}$. On the other hand, $\mathcal{Q}
\pi \left( \mathcal{B}\right) $ and $\mathcal{Q}\pi \left( a\right) \mathcal{Q}$ can be identified with self-adjoint subspaces of the unital Fr\'{e}chet
locally $C^{\ast }$-algebra $C^{\ast }(\mathcal{D}_{\mathcal{F}})$
containing the unit of $C^{\ast }(\mathcal{D}_{\mathcal{F}}).$ Therefore,
since $\mathcal{A}$ is injective in the category of unital Fr\'{e}chet
locally $C^{\ast }$-algebras and unital admissible local $\mathcal{CP}$-maps as morphisms , $\Phi $ extends to a unital admissible local $\mathcal{CP}$
-map $\widetilde{\Phi }$ from $C^{\ast }(\mathcal{D}_{\mathcal{F}})$ to $
\mathcal{A}$. Let $\varphi:\mathcal{A\rightarrow A}$ be given by 
\begin{equation*}
\varphi \left( a\right) =\widetilde{\Phi }\left(  \mathcal{Q}\pi
\left( a\right) \mathcal{Q}\restriction_{\mathcal{D}_{\mathcal{E}}}\right).
\end{equation*}
Clearly, $\varphi $ is a unital admissible local $\mathcal{CP}$-map, and for
each $b\in \mathcal{B}$,
\begin{equation*}
\varphi \left( b\right) =\Phi \left( \mathcal{Q\pi }\left( b\right) \right)
=b\text{.}
\end{equation*}
Moreover, for each $n\geq 1$,
\begin{equation*}
p_{n}\left( \varphi \left( a\right) \right) =p_{n}\left( \widetilde{\Phi }
\left( \mathcal{Q}\pi \left( a\right)  \mathcal{Q}\restriction_{
\mathcal{D}_{\mathcal{E}}}\right) \right) \leq \left\Vert \mathcal{Q}\pi
\left( a\right) \mathcal{Q}\restriction_{\mathcal{D}_{\mathcal{E}
}}\right\Vert _{n}\leq p_{n}^{\min }\left( a\right),
\end{equation*}
for all $a\in \mathcal{A}$.

To show that $\varphi $ is a $\mathcal{B}$-projection on $\mathcal{A}$, it
remains to prove that $\varphi \circ \varphi =\varphi $. By Remark \ref{help}, $\left\lbrace  p_{n}\circ \varphi \right\rbrace_{n\geq 1}$ and $\lbrace \widetilde{p}
_{n}\rbrace_{n\geq 1}$ are families of $\mathcal{B}$-seminorms on $\mathcal{A}$, where 
\begin{equation*}
\widetilde{p}_{n}\left( a\right) =\limsup\limits_{m}\frac{1}{m}p_{n}\left(
\varphi \left( a\right) +\cdot \cdot \cdot +\varphi ^{m}\left( a\right)
\right) , a\in \mathcal{A}.
\end{equation*}
Moreover, $\left\lbrace p_{n}\circ \varphi \right\rbrace_{n\geq 1}\leq \lbrace p_{n}^{\min
}\rbrace_{n\geq 1}$ and $\left\lbrace \widetilde{p}_{n}\right\rbrace_{n\geq 1}\leq
\lbrace p_{n}^{\min }\rbrace_{n\geq 1}$, and then, by the minimality of $\lbrace p_{n}^{\min
}\rbrace_{n\geq 1}$, we have 
\begin{equation*}
p_{n}\circ \varphi =p_{n}^{\min }=\widetilde{p}_{n}, (\forall)\ n\geq 1.
\end{equation*}
 Therefore,
\begin{eqnarray*}
p_{n}\left( \varphi \left( a\right) -\varphi ^{2}\left( a\right) \right)
&=&\left( p_{n}\circ \varphi \right) \left( a-\varphi \left( a\right)
\right) =\widetilde{p}_{n}\left( a-\varphi \left( a\right) \right) \\
&=&\limsup\limits_{m}\frac{1}{m}p_{n}\left( \varphi \left( a\right) -\varphi
^{m+1}\left( a\right) \right) \leq \lim\limits_{m}\frac{2}{m}p_{n}\left(
a\right) =0,
\end{eqnarray*}
for all $a\in \mathcal{A}$ and for all $n\geq 1$. Consequently, $\varphi
=\varphi \circ \varphi $, and so $\varphi $ is an admissible $\mathcal{B}$
-projection on $\mathcal{A}$.

To show the minimality of $\varphi $, let $\psi $ be another admissible $
\mathcal{B}$-projection on $\mathcal{A}$ such that $\psi \prec \varphi $.
Then $\psi \circ \varphi =\varphi \circ \psi =\psi $. By Remark \ref{help}, $
\left\lbrace p_{n}\circ \varphi \right\rbrace _{n\geq 1}$ and $\left\lbrace p_{n}\circ \psi
\right\rbrace _{n\geq 1}$ are families of $\mathcal{B}$-seminorms on $
\mathcal{A}$, and since 
\begin{equation*}
\left( p_{n}\circ \psi \right) \left( a\right) =p_{n}\left( \psi \left(
\varphi \left( a\right) \right) \right) \leq p_{n}\left( \varphi \left(
a\right) \right) =\left( p_{n}\circ \varphi \right) \left( a\right) \leq
p_{n}^{\min }\left( a\right),
\end{equation*}
for all $a\in \mathcal{A}$ and for all $n\geq 1$, and  by the minimality of $
\lbrace p_{n}^{\min }\rbrace _{n\geq 1}$, we deduce that $p_{n}\circ \psi =p_{n}\circ
\varphi $ for all $n\geq 1$. Consequently, $\ker \psi =\ker \varphi $. Let  
$a\in \mathcal{A}$. Since $\psi \left( \psi \left( a\right) -\varphi \left(
a\right) \right) =0$, we deduce that $\psi \left( a\right) -\varphi \left(
a\right) \in \ker \psi $. Therefore, $\psi \left( a\right) -\varphi \left(
a\right) \in \ker \varphi ,$ and then 
\begin{equation*}
0=\varphi \left( \psi \left( a\right) -\varphi \left( a\right) \right) =\psi
\left( a\right) -\varphi \left( a\right) .
\end{equation*}
Consequently, $\psi =\varphi $.
\end{proof}

\begin{remark}\label{Help1}
As in the case of $C^{\ast}$-algebras \cite[Remark 3.6]{H},
we obtain a surjective map from the set of all minimal admissible $\mathcal{B}$-projections on $\mathcal{A}$ to the set of all minimal families
of $\mathcal{B}$-seminorms on $\mathcal{A}$, $\varphi \mapsto \left\lbrace
p_{n}\circ \varphi \right\rbrace _{n\geq 1}$.

 Indeed, if $\varphi $ is a
minimal admissible $\mathcal{B}$-projection on $\mathcal{A}$, then by
Lemma \ref{help}, $\left\lbrace p_{n}\circ \varphi \right\rbrace _{n\geq 1}$ is a
family of $\mathcal{B}$-seminorms on $\mathcal{A}$. 
 By the proof of Theorem \ref{Help}, if $\lbrace q_{n} \rbrace_{n\geq 1}$ is a family of $\mathcal{B}$-seminorms on $\mathcal{A}$
 such that $\lbrace q_{n}\rbrace _{n\geq 1}\leq \left\lbrace p_{n}\circ \varphi \right\rbrace _{n\geq 1},$ there exists an admissible $\mathcal{B}$-projection $\psi $ on $\mathcal{A}$
such that $q_{n}=p_{n}\circ \psi $, for all $n\geq 1.$ Since $\lbrace p_{n}\circ
\psi \rbrace _{n\geq 1}\leq \left\lbrace p_{n}\circ \varphi \right\rbrace _{n\geq 1}$, it
follows that $\ker \varphi \subseteq \ker \psi $. From this relation and
taking into account that $\varphi $ is a projection, we deduce that $\psi
=\psi \circ \varphi $. Then $\varphi \circ $ $\psi $ is an admissible $
\mathcal{B}$-projection on $\mathcal{A}$. Moreover, $\varphi \circ \psi $ $
\prec \varphi $ and then, by the minimality of $\varphi $, we have $\varphi
=\varphi \circ \psi $. Since
\begin{equation*}
\left( p_{n}\circ \psi \right) \left( a\right) \leq \left( p_{n}\circ
\varphi \right) \left( a\right) =p_{n}\left( \varphi \left( \psi \left(
a\right) \right) \right) \leq p_{n}\left( \psi \left( a\right) \right)
=\left( p_{n}\circ \psi \right) \left( a\right),
\end{equation*}
for all $a\in \mathcal{A},$ it follows that $p_{n}\circ \psi
=p_{n}\circ \varphi .$ Therefore, $\left\lbrace p_{n}\circ \varphi \right\rbrace
_{n \geq 1 }$ is a minimal family of $\mathcal{B}$-seminorms, and
the map $\varphi \mapsto \left\lbrace p_{n}\circ \varphi \right\rbrace _{n \geq 1 }$ from the set of all minimal admissible $\mathcal{B}$-projections on $\mathcal{A}$ to the set of all minimal families of $\mathcal{B}$-seminorms on $\mathcal{A}$ is well-defined. The surjectivity follows from the proof of Theorem \ref{Help}.
\end{remark}

\section{Admissible injective envelope for a Fr\'{e}chet locally $C^{\ast }$-algebra}

Let $\mathcal{A}$ be a unital locally $C^{\ast}$-algebra whose topology is
defined by the family of $C^{\ast}$-seminorms $\lbrace p_{\lambda }\rbrace _{\lambda
\in \Lambda }$.

\begin{definition}
A pair $(\mathcal{B},\phi )$ of a unital locally $C^{\ast }$-algebra $\mathcal{B}$ and a unital local isometric $\ast $-morphism $\phi:\mathcal{A}
\rightarrow \mathcal{B}$ is called an admissible extension of $\mathcal{A}$.
An admissible extension $(\mathcal{B},\phi )$ of $\mathcal{A}$ is admissible injective if the locally $C^{\ast }$-algebra $\mathcal{B}$ is admissible injective.
\end{definition}

\begin{remark}\label{i}
Any unital Fr\'{e}chet locally $C^{\ast }$-algebra $\mathcal{A}$
has an admissible injective extension. Indeed, by \cite[Theorem 7.2]{D},
there exist a Fr\'{e}chet quantized domain $\lbrace \mathcal{H}; \mathcal{E} = \lbrace \mathcal{H}_{n} \rbrace _{n\geq 1}; \mathcal{D}_{\mathcal{E}}\rbrace$ and a unital local
isometric $\ast $-morphism $\pi:\mathcal{A}\rightarrow C^{\ast }(\mathcal{D}_{\mathcal{E}})$. By  Remark \ref{Rem}, $C^{\ast }(\mathcal{D}_{\mathcal{E}})$ is admissible injective.
\end{remark}

\begin{remark}
A unital Fr\'{e}chet locally $C^*$-algebra $\mathcal{A}\subseteq C^{\ast }(\mathcal{D}_{\mathcal{E}})$ can be identified with a locally $C^*$-subalgebra of its injective $\mathcal{R}$-envelope $\mathcal{I}_{\mathcal{R}}(\mathcal{A})$ \cite{DD}. Then $\left( \mathcal{I}_{\mathcal{R}}(\mathcal{A}), i \right)$ is an admissible injective extension of $\mathcal{A}$.

\end{remark}

\begin{definition}
An admissible injective envelope of $\mathcal{A}$ is an admissible extension $(\mathcal{B},\phi )$ of $\mathcal{A}$ with the property that id$_{\mathcal{B
}}$ is the unique unital admissible local $\mathcal{CP}$-map from $\mathcal{B}$ to $\mathcal{B}$ which fixes each element in $\phi (\mathcal{A})$.
\end{definition}

The following theorem is the main result of this paper.

\begin{theorem}\label{p}
 Any unital Fr\'{e}chet locally $C^{\ast}$-algebra $\mathcal{A}$
has an admissible injective envelope $(\mathcal{B},\phi )$, which is unique
in the sense that if $(\mathcal{B}_{1},\phi _{1})$ is another admissible
injective envelope for $\mathcal{A}$, then there exists a unique unital
local isometric $\ast $-isomorphism $\Phi:\mathcal{B}\rightarrow \mathcal{B}_{1}$ such that $\Phi \circ \phi =\phi _{1}$.
\end{theorem}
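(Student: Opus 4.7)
The plan is to adapt Hamana's construction to the Fr\'{e}chet locally $C^*$ setting, using Theorem~\ref{Help} and Remark~\ref{Help1} in place of their $C^*$-analogues. For existence, I would first invoke Remark~\ref{i} to realize $\mathcal{A}$ as a unital locally $C^*$-subalgebra of an admissible injective $\mathcal{C}=C^*(\mathcal{D}_\mathcal{E})$. Theorem~\ref{Help} then provides a minimal admissible $\mathcal{A}$-projection $\varphi:\mathcal{C}\to\mathcal{C}$. I set $\mathcal{B}:=C^*(\varphi)$ and take $\phi:\mathcal{A}\to\mathcal{B}$ to be the restriction of $j_\varphi$ to $\mathcal{A}$. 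Since $\varphi$ fixes $\mathcal{A}$, the product $b\cdot c=\varphi(bc)$ on $C^*(\varphi)$ coincides with the original product of $\mathcal{A}$ for $b,c\in\mathcal{A}$, so $\phi$ is a unital local isometric $*$-morphism. By Lemma~\ref{projection}, $\mathcal{B}$ is admissible injective.

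For the envelope (rigidity) property, let $\psi:\mathcal{B}\to\mathcal{B}$ be a unital admissible local $\mathcal{CP}$-map fixing $\phi(\mathcal{A})$. I would transport $\psi$ via the canonical map $j_\varphi$ to the unital admissible local $\mathcal{CP}$-map $\psi_0:=j_\varphi^{-1}\circ\psi\circ j_\varphi$ from $Im(\varphi)\subseteq\mathcal{C}$ into $\mathcal{C}$, and extend it, using admissible injectivity of $\mathcal{C}$ applied to the unital self-adjoint subspace $Im(\varphi)$, to $\widetilde\psi:\mathcal{C}\to\mathcal{C}$. Form
\[
\zeta:=\varphi\circ\widetilde\psi\circ\varphi:\mathcal{C}\to\mathcal{C},
\]
which is a unital admissible local $\mathcal{CP}$-map fixing $\mathcal{A}$. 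Lemma~\ref{help} then guarantees that both $\{p_n\circ\zeta\}_n$ and the Ces\`aro family
\[
\widetilde p_n(a):=\limsup_m\tfrac{1}{m}p_n\bigl(\zeta(a)+\zeta^2(a)+\cdots+\zeta^m(a)\bigr)
\]
are families of $\mathcal{A}$-seminorms on $\mathcal{C}$. Since $\zeta^k(a)\in Im(\varphi)$ for $k\ge 1$, a simple induction using admissibility of $\widetilde\psi$ and $\varphi$ gives $p_n(\zeta^k(a))\le p_n(\varphi(a))$, so $\widetilde p_n\le p_n\circ\zeta\le p_n\circ\varphi$. By Remark~\ref{Help1}, $\{p_n\circ\varphi\}_n$ is a minimal family of $\mathcal{A}$-seminorms, so all three coincide, and the telescoping trick from the end of the proof of Theorem~\ref{Help} gives
\[
p_n\bigl(\zeta(a)-\zeta^2(a)\bigr)=\widetilde p_n\bigl(a-\zeta(a)\bigr)\le\lim_m\tfrac{2}{m}p_n(\varphi(a))=0,
\]
i.e.\ $\zeta^2=\zeta$. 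Because in addition $\varphi\circ\zeta=\zeta=\zeta\circ\varphi$, $\zeta$ is an admissible $\mathcal{A}$-projection with $\zeta\prec\varphi$, and minimality of $\varphi$ forces $\zeta=\varphi$. Evaluating at $b\in Im(\varphi)$ (so that $\varphi(b)=b$) yields $\varphi(\widetilde\psi(b))=b$; since $\widetilde\psi(b)=j_\varphi^{-1}(\psi(j_\varphi(b)))$ already lies in $Im(\varphi)$ and is therefore fixed by $\varphi$, we conclude $\psi(j_\varphi(b))=j_\varphi(b)$, i.e.\ $\psi=\mathrm{id}_\mathcal{B}$.

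For uniqueness, I would run the standard zig-zag. Given a second admissible injective envelope $(\mathcal{B}_1,\phi_1)$, admissible injectivity of $\mathcal{B}_1$ extends the admissible local $\mathcal{CP}$-map $\phi_1\circ\phi^{-1}:\phi(\mathcal{A})\to\mathcal{B}_1$ to $\Phi:\mathcal{B}\to\mathcal{B}_1$ with $\Phi\circ\phi=\phi_1$; symmetrically one gets $\Phi':\mathcal{B}_1\to\mathcal{B}$ with $\Phi'\circ\phi_1=\phi$. The composites $\Phi'\circ\Phi$ and $\Phi\circ\Phi'$ fix $\phi(\mathcal{A})$ and $\phi_1(\mathcal{A})$ respectively, so by the rigidity just proved both are identities. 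Thus $\Phi$ is a bijective unital admissible local $\mathcal{CP}$-map with admissible local $\mathcal{CP}$-inverse, and Lemma~\ref{U} promotes it to a local isometric $*$-isomorphism. Uniqueness of $\Phi$ follows by applying rigidity to $\Phi^{-1}\circ\Phi_0$ for any other candidate $\Phi_0:\mathcal{B}\to\mathcal{B}_1$ with $\Phi_0\circ\phi=\phi_1$.

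The main obstacle is the careful bookkeeping of the two distinct structures carried by the set $Im(\varphi)$: the one inherited from $\mathcal{C}$ (which, as remarked just after Proposition~\ref{P1}, need not be a locally $C^*$-algebra in general) versus the intrinsic structure of $C^*(\varphi)$ defined by $b\cdot c=\varphi(bc)$. Every composition in the rigidity step must be tracked on the correct side of $j_\varphi$; in particular one must check that $\psi_0$ is admissible local $\mathcal{CP}$ as a map from the subspace $Im(\varphi)\subseteq\mathcal{C}$ into $\mathcal{C}$ so that its extension $\widetilde\psi$ genuinely exists by the admissible injectivity of $\mathcal{C}$, and that $\zeta$ lives in the same category of unital admissible local $\mathcal{CP}$-maps on $\mathcal{C}$ as $\varphi$, so that Lemma~\ref{help} and the minimality statement of Remark~\ref{Help1} apply to it.
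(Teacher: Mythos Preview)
Your argument is correct and leads to the same conclusion, but the rigidity step is organized differently from the paper. The paper isolates the rigidity as Lemma~\ref{p1} and proves it \emph{intrinsically} on $C^*(\varphi)$: it observes that any family of $\mathcal{B}$-seminorms $\{\widehat{p_n}\}$ on $C^*(\varphi)$ yields a family $\{\widehat{p_n}\circ\varphi\}$ of $\mathcal{B}$-seminorms on the ambient algebra dominated by the minimal family $\{p_n\circ\varphi\}$, hence $\widehat{p_n}=p_n|_{C^*(\varphi)}$; applying this to the Ces\`aro family built from $\psi$ itself gives $\psi=\mathrm{id}$ in one stroke, with no need to extend $\psi$ back to $\mathcal{C}$. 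Your route instead pushes $\psi$ out to $\mathcal{C}$ via admissible injectivity, sandwiches to form $\zeta=\varphi\circ\widetilde\psi\circ\varphi$, and then invokes minimality of $\varphi$ as a \emph{projection} (rather than minimality of the seminorm family) after first running the Ces\`aro/telescoping trick on $\zeta$ to see it is idempotent. This works, but costs an extra extension step and the bookkeeping with $j_\varphi$ that you flag in your final paragraph; the paper's version avoids both.

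For uniqueness, the paper packages the zig-zag as a separate Lemma~\ref{p2} (extending a local isometric $*$-isomorphism between the subalgebras to one between the ranges of minimal projections), and then applies it with $\varphi_2=\mathrm{id}_{\mathcal{B}_1}$ after noting that the envelope hypothesis makes $\mathrm{id}_{\mathcal{B}_1}$ a minimal admissible $\phi_1(\mathcal{A})$-projection. Your inline zig-zag is the same argument specialized to this situation; the paper's formulation has the minor advantage of being reusable (e.g.\ for the automorphism-extension corollary that follows).
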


To prove this theorem, we need first the following two lemmas.

\begin{lemma}\label{p1}
Let $\mathcal{A}$ be a unital admissible injective  Fr\'{e}chet locally $C^{\ast}$-algebra, $\mathcal{B} $ a locally $C^{\ast }$-subalgebra and $\varphi $ a minimal admissible $\mathcal{B}$-projection on $\mathcal{A}$. Then id$_{C^{\ast }\left( \varphi \right) }$
is the unique unital admissible local completely positive map from $C^{\ast
}\left( \varphi \right) $ to $C^{\ast }\left( \varphi \right) $ which
extends id$_{\mathcal{B}}.$
\end{lemma}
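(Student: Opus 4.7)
The plan is to reduce to the minimality of $\varphi$ by constructing, from a given unital admissible local $\mathcal{CP}$-map $\psi:C^{\ast}(\varphi)\to C^{\ast}(\varphi)$ extending $\mathrm{id}_{\mathcal{B}}$, a competing admissible $\mathcal{B}$-projection $\widetilde{\varphi}$ on $\mathcal{A}$ with $\widetilde{\varphi}\prec\varphi$, and then to conclude $\widetilde{\varphi}=\varphi$, whence $\psi=\mathrm{id}_{C^{\ast}(\varphi)}$.

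First I would set $\widetilde{\varphi}:=j_{\varphi}^{-1}\circ\psi\circ j_{\varphi}\circ\varphi:\mathcal{A}\longrightarrow\mathcal{A}$. Each of the four factors is a unital admissible local $\mathcal{CP}$-map (using Proposition \ref{P1} and the remarks following it), so $\widetilde{\varphi}$ is too. For every $b\in\mathcal{B}$ the four factors fix $b$ (since $\psi|_{\mathcal{B}}=\mathrm{id}_{\mathcal{B}}$, $\varphi|_{\mathcal{B}}=\mathrm{id}_{\mathcal{B}}$, and $j_{\varphi}^{\pm 1}$ act set-theoretically as the identity on $\mathrm{Im}(\varphi)$), so $\widetilde{\varphi}|_{\mathcal{B}}=\mathrm{id}_{\mathcal{B}}$. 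Because $j_{\varphi}^{-1}$ ranges in $\mathrm{Im}(\varphi)$, on which $\varphi$ is the identity, and $\varphi\circ\varphi=\varphi$, a direct check gives $\widetilde{\varphi}\circ\varphi=\varphi\circ\widetilde{\varphi}=\widetilde{\varphi}$; thus once I know $\widetilde{\varphi}$ is a projection, $\widetilde{\varphi}\prec\varphi$ is automatic.

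The heart of the argument is showing $\widetilde{\varphi}\circ\widetilde{\varphi}=\widetilde{\varphi}$, via a twofold use of minimality. By Lemma \ref{help}(1), $\{p_n\circ\widetilde{\varphi}\}_{n\geq 1}$ is a family of $\mathcal{B}$-seminorms on $\mathcal{A}$, and using that $j_{\varphi}^{\pm 1}$ are isometric together with admissibility of $\psi$ gives $p_n(\widetilde{\varphi}(a))\leq p_n(\varphi(a))$, i.e.\ $\{p_n\circ\widetilde{\varphi}\}\leq\{p_n\circ\varphi\}$. By Remark \ref{Help1}, $\{p_n\circ\varphi\}$ is a minimal family of $\mathcal{B}$-seminorms, so $p_n\circ\widetilde{\varphi}=p_n\circ\varphi$ for every $n$. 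Applying Lemma \ref{help}(2) to $\widetilde{\varphi}$ yields a further family of $\mathcal{B}$-seminorms $\widetilde{p}_n(a):=\limsup_m \tfrac{1}{m}\,p_n(\widetilde{\varphi}(a)+\cdots+\widetilde{\varphi}^m(a))$ satisfying $\widetilde{p}_n\leq p_n\circ\widetilde{\varphi}=p_n\circ\varphi$, and minimality again forces $\widetilde{p}_n=p_n\circ\widetilde{\varphi}$. Then, exactly as in the final calculation of the proof of Theorem \ref{Help}, the telescoping $\widetilde{\varphi}(a-\widetilde{\varphi}(a))+\cdots+\widetilde{\varphi}^m(a-\widetilde{\varphi}(a))=\widetilde{\varphi}(a)-\widetilde{\varphi}^{m+1}(a)$ yields
\begin{equation*}
p_n\bigl(\widetilde{\varphi}(a)-\widetilde{\varphi}^2(a)\bigr)=(p_n\circ\widetilde{\varphi})\bigl(a-\widetilde{\varphi}(a)\bigr)=\widetilde{p}_n\bigl(a-\widetilde{\varphi}(a)\bigr)\leq\limsup_m\tfrac{2}{m}\,p_n(a)=0,
\end{equation*}
so $\widetilde{\varphi}=\widetilde{\varphi}^2$.

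Hence $\widetilde{\varphi}$ is an admissible $\mathcal{B}$-projection with $\widetilde{\varphi}\prec\varphi$, and minimality of $\varphi$ gives $\widetilde{\varphi}=\varphi$. Unwinding the definition, $j_{\varphi}^{-1}\circ\psi\circ j_{\varphi}\circ\varphi=\varphi$, which translates to $\psi(\varphi(a))=\varphi(a)$ for every $a\in\mathcal{A}$; since $\varphi$ surjects onto $\mathrm{Im}(\varphi)=C^{\ast}(\varphi)$, we obtain $\psi=\mathrm{id}_{C^{\ast}(\varphi)}$. The main obstacle I expect is precisely the twofold use of minimality of $\{p_n\circ\varphi\}$ — first to identify $p_n\circ\widetilde{\varphi}$ with $p_n\circ\varphi$, and then to identify the Cesaro seminorms $\widetilde{p}_n$ with $p_n\circ\widetilde{\varphi}$ — as these two equalities are what make the telescoping $\tfrac{2}{m}$-bound available, turning $\widetilde{\varphi}$ into a projection and unlocking the minimality comparison with $\varphi$.
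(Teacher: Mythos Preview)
Your proof is correct but takes a different route from the paper's. The paper argues entirely inside $C^{\ast}(\varphi)$: it first shows that $\{p_n|_{C^{\ast}(\varphi)}\}_{n\geq 1}$ is the \emph{only} family of $\mathcal{B}$-seminorms on $C^{\ast}(\varphi)$ (any such family $\{\widehat{p_n}\}$ gives $\{\widehat{p_n}\circ\varphi\}\leq\{p_n\circ\varphi\}$ on $\mathcal{A}$, and minimality of the latter forces equality), and then applies the Ces\`aro seminorms of Lemma~\ref{help}(2) directly to $\psi$ on $C^{\ast}(\varphi)$, obtaining $p_n(a-\psi(a))=\widetilde{p_n}(a-\psi(a))=0$ by a single telescoping step. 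You instead lift $\psi$ to $\widetilde{\varphi}=j_\varphi^{-1}\circ\psi\circ j_\varphi\circ\varphi$ on $\mathcal{A}$, run the same Ces\`aro/telescoping machinery there to show $\widetilde{\varphi}$ is idempotent, and then invoke the minimality of $\varphi$ \emph{as a projection} (via $\widetilde{\varphi}\prec\varphi$) rather than only seminorm minimality. The paper's route is shorter and isolates a clean rigidity statement about $\mathcal{B}$-seminorms on $C^{\ast}(\varphi)$; your route is a step longer but has the conceptual appeal of competing directly with $\varphi$ in the partial order $\prec$, making the projection-minimality hypothesis do the final work.
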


\begin{proof}
Let $\psi :C^{\ast }\left( \varphi \right) \rightarrow $ $C^{\ast }\left(
\varphi \right) $ be a unital admissible local $\mathcal{CP}$-map such that $ \psi \restriction _{\mathcal{B}}= id_{\mathcal{B}}$. By Remark 
\ref{Help1}, $\left\lbrace  p_{n}\circ \varphi \right\rbrace _{n\geq 1}\ $ is a minimal
family of $\mathcal{B}$-seminorms on $\mathcal{A}$.  If $\left\lbrace 
\widehat{p_{n}}\right\rbrace _{n\geq 1}$ is a family of $\mathcal{B}$-seminorms
on $C^{\ast }\left( \varphi \right) $, then $\left\lbrace \widehat{p_{n}}\circ
\varphi \right\rbrace _{n\geq 1}$ is a family of $\mathcal{B}$-seminorms on $
\mathcal{A}$. Moreover, $\left\lbrace \widehat{p_{n}}\circ \varphi \right\rbrace
_{n\geq 1}\leq \left\lbrace p_{n}\circ \varphi \right\rbrace _{n\geq 1}$, and by the
minimality of $\left\lbrace p_{n}\circ \varphi \right\rbrace _{n\geq 1}$, we deduce
that $ \widehat{p_{n}}= p_{n}\restriction _{C^{\ast }\left( \varphi
\right) }$ for all $n\geq 1$. On the other hand, by Remark \ref{help}, $
\left\lbrace \widetilde{p_{n}}\right\rbrace _{n\geq 1}$ is a family of $\mathcal{B}$-seminorms on $C^{\ast }\left( \varphi
\right) $, where 
\begin{equation*}
\widetilde{p_{n}}\left( a\right) =\limsup\limits_{m}\frac{1}{m}p_{n}\left(
\psi \left( a\right) +\cdot \cdot \cdot +\psi ^{m}\left( a\right) \right)
,a\in C^{\ast }\left( \varphi \right),
\end{equation*}
 and so $\left\lbrace \widetilde{p_{n}}\right\rbrace _{n\geq 1}=\left\lbrace
 p_{n}\restriction _{C^{\ast}\left( \varphi \right) }\right\rbrace _{n\geq
1}$. Then
\begin{equation*}
p_{n}\left( a-\psi \left( a\right) \right) =\widetilde{p_{n}}\left( a-\psi
\left( a\right) \right) =\limsup\limits_{m}\frac{1}{m}p_{n}\left( \psi
\left( a\right) -\psi ^{m+1}\left( a\right) \right) =0,
\end{equation*}
for all $a\in C^{\ast }\left( \varphi \right)$ and for all $n\geq 1$. Consequently, $\psi =id_{C^{\ast }\left( \varphi \right) }$.
\end{proof}

The above lemma is a local convex version of \cite[Lemma 3.7]{H} and the
following lemma is a local convex version of \cite[Lemma 3.8]{H}.

\begin{lemma}\label{p2}
Let $\mathcal{A}_{1}$ and $\mathcal{A}_{2}$ be two unital admissible injective Fr\'{e}chet locally $C^{\ast }$-algebras whose topologies
are given by the families of $C^{\ast }$-seminorms $\left\lbrace p_{n}\right\rbrace
_{n\geq 1}$ and $\left\lbrace q_{n}\right\rbrace _{n\geq 1}$, respectively. Let $\mathcal{B}
_{1}\ $ be a locally $C^{\ast }$-subalgebra of $\mathcal{A}_{1}$, $\mathcal{B}
_{2}\ $ a locally $C^{\ast }$-subalgebra of $\mathcal{A}_{2}$, $\varphi
_{1}$ a minimal admissible $\mathcal{B}_{1}$-projection on $\mathcal{A}
_{1}$ and $\varphi _{2}$ a minimal admissible $\mathcal{B}_{2}$-projection on $\mathcal{A}_{2}$. If $\phi:\mathcal{B}_{1}\rightarrow 
\mathcal{B}_{2}$ is a unital local isometric $\ast $-isomorphism, then it
extends to a unique unital local isometric $\ast $-isomorphism $\widetilde{\phi }:C^{\ast }\left( \varphi _{1}\right) \rightarrow C^{\ast }\left(
\varphi _{2}\right).$
\end{lemma}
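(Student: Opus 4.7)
The plan is to mimic Hamana's rigidity argument using the machinery already developed: admissible injectivity of the ranges of the projections (Lemma \ref{projection}), the uniqueness statement for extensions of $\mathrm{id}_{\mathcal{B}}$ (Lemma \ref{p1}), and the automatic multiplicativity of an invertible unital admissible local $\mathcal{CP}$-map (Lemma \ref{U}).

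First I would note that since $\mathcal{A}_{1}$ and $\mathcal{A}_{2}$ are admissible injective and $\varphi_{1}$, $\varphi_{2}$ are admissible projections, Lemma \ref{projection} yields that $C^{\ast}(\varphi_{1})$ and $C^{\ast}(\varphi_{2})$ are themselves admissible injective Fr\'{e}chet locally $C^{\ast}$-algebras. Observe also that $\mathcal{B}_{i}$ sits inside $C^{\ast}(\varphi_{i})$ as a self-adjoint subspace containing the unit, because $\varphi_{i}(b)=b$ for $b\in\mathcal{B}_{i}$ and therefore the product $b\cdot c=\varphi_{i}(bc)=\varphi_{i}(b)\varphi_{i}(c)=bc$ agrees with the ambient product on $\mathcal{B}_{i}$.

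Next, regarding $\phi:\mathcal{B}_{1}\rightarrow\mathcal{B}_{2}\subseteq C^{\ast}(\varphi_{2})$ as a unital admissible local $\mathcal{CP}$-map (every local isometric $\ast$-morphism is such), the admissible injectivity of $C^{\ast}(\varphi_{2})$ applied to the self-adjoint subspace $\mathcal{B}_{1}\subseteq C^{\ast}(\varphi_{1})$ produces an extension
\begin{equation*}
\widetilde{\phi}:C^{\ast}(\varphi_{1})\rightarrow C^{\ast}(\varphi_{2})
\end{equation*}
which is a unital admissible local $\mathcal{CP}$-map. Symmetrically, extending $\phi^{-1}:\mathcal{B}_{2}\rightarrow\mathcal{B}_{1}\subseteq C^{\ast}(\varphi_{1})$ via admissible injectivity of $C^{\ast}(\varphi_{1})$ yields a unital admissible local $\mathcal{CP}$-map $\widetilde{\phi^{-1}}:C^{\ast}(\varphi_{2})\rightarrow C^{\ast}(\varphi_{1})$.

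The key step is then to show that $\widetilde{\phi}$ and $\widetilde{\phi^{-1}}$ are inverse to each other. The composition $\widetilde{\phi^{-1}}\circ\widetilde{\phi}:C^{\ast}(\varphi_{1})\rightarrow C^{\ast}(\varphi_{1})$ is unital admissible local $\mathcal{CP}$, and on $\mathcal{B}_{1}$ it restricts to $\phi^{-1}\circ\phi=\mathrm{id}_{\mathcal{B}_{1}}$. By Lemma \ref{p1}, which is exactly the rigidity coming from minimality of the admissible $\mathcal{B}_{1}$-projection $\varphi_{1}$, the identity $\mathrm{id}_{C^{\ast}(\varphi_{1})}$ is the only such extension, so $\widetilde{\phi^{-1}}\circ\widetilde{\phi}=\mathrm{id}_{C^{\ast}(\varphi_{1})}$. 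The mirror argument with the roles of $1$ and $2$ swapped gives $\widetilde{\phi}\circ\widetilde{\phi^{-1}}=\mathrm{id}_{C^{\ast}(\varphi_{2})}$. Hence $\widetilde{\phi}$ is bijective with inverse $\widetilde{\phi^{-1}}$, and both are unital admissible local $\mathcal{CP}$-maps; invoking Lemma \ref{U} promotes $\widetilde{\phi}$ to a unital local isometric $\ast$-isomorphism. Finally, uniqueness of $\widetilde{\phi}$: if $\widetilde{\phi}'$ is another extension of $\phi$, then $\widetilde{\phi^{-1}}\circ\widetilde{\phi}'$ is a unital admissible local $\mathcal{CP}$-map on $C^{\ast}(\varphi_{1})$ restricting to $\mathrm{id}_{\mathcal{B}_{1}}$, so by Lemma \ref{p1} again it equals $\mathrm{id}_{C^{\ast}(\varphi_{1})}$, forcing $\widetilde{\phi}'=\widetilde{\phi}$.

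I do not expect a genuine obstacle here; the main point requiring care is to confirm that the admissible injectivity hypothesis is applicable to $C^{\ast}(\varphi_{i})$ and not merely to $\mathcal{A}_{i}$, which is precisely what Lemma \ref{projection} grants. The rest is a clean two-sided application of admissible injectivity plus the minimality rigidity of Lemma \ref{p1}, followed by the automatic upgrade of Lemma \ref{U}.
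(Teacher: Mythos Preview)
Your proposal is correct and follows essentially the same route as the paper's proof: both use Lemma \ref{projection} to transfer admissible injectivity to $C^{\ast}(\varphi_{i})$, extend $\phi$ and $\phi^{-1}$ separately, apply Lemma \ref{p1} to force the two compositions to be identities, and then invoke Lemma \ref{U} for the upgrade to a local isometric $\ast$-isomorphism; uniqueness is handled identically via Lemma \ref{p1}. Your extra remark that $\mathcal{B}_{i}$ embeds into $C^{\ast}(\varphi_{i})$ with the same multiplication is a helpful clarification the paper leaves implicit.
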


\begin{proof}
Since $\phi :\mathcal{B}_{1}\rightarrow \mathcal{B}_{2}$ is a unital local
isometric $\ast $-isomorphism, $\phi $ is invertible and $\phi ^{-1}:\mathcal{B}
_{2}\rightarrow \mathcal{B}_{1}$ is a unital local isometric $\ast $
-isomorphism too. By Lemma \ref{projection}, $C^{\ast }(\varphi _{1})$ and $
C^{\ast }(\varphi _{2})$ are admissible injective, and since $\phi $ and $
\phi ^{-1}$ are unital admissible local $\mathcal{CP}$-maps, there exist the unital admissible
 local $\mathcal{CP}$-maps $\tilde{\phi}:C^{\ast }(\varphi
_{1})\rightarrow C^{\ast }(\varphi _{2})$ and $\widetilde{\phi ^{-1}}
:C^{\ast }(\varphi _{2})\rightarrow C^{\ast }(\varphi _{1})$ such that 
\begin{equation*}
 \tilde{\phi}\restriction _{\mathcal{B}_{1}}=\phi \text{ and }
\widetilde{\phi ^{-1}}\restriction _{\mathcal{B}_{2}}=\phi ^{-1}.
\end{equation*}

On the other hand, $\widetilde{\phi ^{-1}}\circ \tilde{\phi}:C^{\ast
}(\varphi _{1})\rightarrow C^{\ast }(\varphi _{1})$ and $\tilde{\phi}\circ 
\widetilde{\phi ^{-1}}:C^{\ast }(\varphi _{2})\rightarrow C^{\ast }(\varphi
_{2})$ are unital admissible local $\mathcal{CP}$-maps such that 
\begin{equation*}
 \widetilde{\phi ^{-1}}\circ \tilde{\phi}\restriction _{\mathcal{B}
_{1}}=id_{\mathcal{B}_{1}}\text{ and } \tilde{\phi}\circ 
\widetilde{\phi ^{-1}}\restriction _{\mathcal{B}_{2}}=id_{\mathcal{B}_{2}}.
\end{equation*}
Then, by Lemma \ref{p1}, $\widetilde{\phi ^{-1}}\circ \tilde{\phi}=id_{C^{\ast }(\varphi _{1})}$ and $\tilde{\phi}\circ \widetilde{\phi ^{-1}}=id_{C^{\ast }(\varphi _{2})}$. Consequently, there exists $\left( \tilde{\phi
}\right) ^{-1}=\widetilde{\phi ^{-1}}$, and since $\tilde{\phi}$ and $\widetilde{\phi^{-1}}$ are unital admissible local $\mathcal{CP}$-maps, by Lemma 
\ref{U}, it follows that $\tilde{\phi}$ is a unital local isometric $\ast $-isomorphism.

To prove the uniqueness, let $\Psi :C^{\ast }\left( \varphi _{1}\right)
\rightarrow C^{\ast }\left( \varphi _{2}\right) $ be another unital local
isometric $\ast $-isomorphism such that $ \Psi \restriction _{\mathcal{B}_{1}}=\phi $. Then $\widetilde{\phi ^{-1}}\circ \Psi $ is a unital admissible local $\mathcal{CP}$-map from $C^{\ast }\left( \varphi
_{1}\right) $ to $C^{\ast }\left( \varphi _{1}\right) $ such that $ 
\widetilde{\phi ^{-1}}\circ \Psi \restriction _{\mathcal{B}_{1}}=\phi
^{-1}\circ \phi =id_{\mathcal{B}_{1}}$. By Lemma \ref{p1}, it
follows that $\widetilde{\phi ^{-1}}\circ \Psi =$id$_{C^{\ast }(\varphi
_{1})}$, and so $\Psi =$ $\tilde{\phi}$.
\end{proof}

\subsection*{The proof of Theorem \ref{p}}

\begin{proof}
Since $\mathcal{A}$ is a unital Fr\'{e}chet locally $C^{\ast }$-algebra,
there exists a unital admissible injective Fr\'{e}chet locally $C^{\ast }$
-algebra $\mathcal{C}$ such that $\mathcal{A}\subseteq \mathcal{C}$. By
Theorem \ref{Help}, there exists a minimal admissible $\mathcal{A}$-projection $\varphi $ on $\mathcal{C}$. Let $\mathcal{B}:=C^{\ast }(\varphi
)$ and let $\phi $ be the inclusion of $ \mathcal{A}$ into $\mathcal{B}$.
Clearly, $(\mathcal{B},\phi )$ is an extension of$\ \mathcal{A}$.
By Lemma \ref{projection}, $\mathcal{B}$ is a unital admissible injective locally $C^{\ast }$-algebra, and by Lemma \ref{p1}, $(\mathcal{B},\phi )$ is
an admissible injective envelope of $\mathcal{A}$.

Let $(\mathcal{B}_{1},\phi _{1})$ be another admissible injective envelope
of $\mathcal{A}$. Then id$_{\mathcal{B}_{1}}$ is the unique unital admissible local $\mathcal{CP}$-map from $\mathcal{B}_{1}$ to $\mathcal{B}_{1}$
which fixes each element in $\phi _{1}(\mathcal{A})$. Therefore, id$_{
\mathcal{B}_{1}}$ is a minimal admissible $\phi _{1}(\mathcal{A})$
-projection. Then, since $\phi _{1}:\mathcal{A}\rightarrow \phi _{1}(
\mathcal{A})$ is a unital local isometric $\ast $-isomorphism, by Lemma \ref{p2}, it extends to a unital local isometric $\ast $-isomorphism $\Phi:
\mathcal{B}\rightarrow \mathcal{B}_{1}$. Moreover, $ \left( \Phi \circ
\phi \right) (a)=\phi _{1}(a)$ for all $a\in \mathcal{A},$ and so $\Phi
\circ \phi =\phi _{1}$.
\end{proof}

\begin{remark}
\begin{itemize}
\item[(1)] The admissible injective envelope of a $C^*$-algebra $\mathcal{A}$ coincides with its injective envelope \cite{H}.
\item[(2)] The admissible injective envelope of a unital admissible injective Fr\'{e}chet locally $C^*$-algebra $\mathcal{A}$ is $\mathcal{A}$.
\end{itemize}

\end{remark}

\begin{corollary}
Let $\mathcal{A}$ be a unital Fr\'{e}chet locally $C^{\ast }$-algebra and let $(
\mathcal{B},\phi )$ be its admissible injective envelope. Then, for each
unital local isometric $\ast $-automorphism $\Phi $ of $\mathcal{A}$, there
exists a unique unital local isometric $\ast $-automorphism $\widehat{\Phi }$ of $
\mathcal{B}$ such that $\phi \circ \Phi =\widehat{\Phi }\circ \phi $.
\end{corollary}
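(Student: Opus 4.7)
The plan is to reduce the statement to the uniqueness clause of Theorem~\ref{p} by exhibiting two admissible injective envelopes of $\mathcal{A}$ that differ precisely by the automorphism $\Phi$.

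First I would form the composition $\phi\circ\Phi:\mathcal{A}\rightarrow\mathcal{B}$ and observe that it is a unital local isometric $\ast$-morphism, being the composition of two such maps. Hence the pair $(\mathcal{B},\phi\circ\Phi)$ is an admissible extension of $\mathcal{A}$ in the sense of the definition preceding Remark~\ref{i}, and $\mathcal{B}$ is already admissible injective since $(\mathcal{B},\phi)$ is an admissible injective envelope of $\mathcal{A}$. So $(\mathcal{B},\phi\circ\Phi)$ is an admissible injective extension.

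Next I would verify that $(\mathcal{B},\phi\circ\Phi)$ is, in fact, again an admissible injective envelope. The crucial point is that $\Phi$ is a $\ast$-automorphism of $\mathcal{A}$, so
\[
(\phi\circ\Phi)(\mathcal{A}) \;=\; \phi(\Phi(\mathcal{A})) \;=\; \phi(\mathcal{A}).
\]
Consequently, a unital admissible local $\mathcal{CP}$-map from $\mathcal{B}$ to $\mathcal{B}$ fixes each element of $(\phi\circ\Phi)(\mathcal{A})$ iff it fixes each element of $\phi(\mathcal{A})$. Since $(\mathcal{B},\phi)$ is an admissible injective envelope, $\mathrm{id}_{\mathcal{B}}$ is the unique such map, so the envelope property also holds for $(\mathcal{B},\phi\circ\Phi)$.

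Now I would invoke the uniqueness part of Theorem~\ref{p} applied to the two admissible injective envelopes $(\mathcal{B},\phi)$ and $(\mathcal{B},\phi\circ\Phi)$: there exists a unique unital local isometric $\ast$-isomorphism $\widehat{\Phi}:\mathcal{B}\rightarrow\mathcal{B}$ such that $\widehat{\Phi}\circ\phi=\phi\circ\Phi$. Because $\widehat{\Phi}$ goes from $\mathcal{B}$ to itself, it is the required unital local isometric $\ast$-automorphism, and uniqueness is immediate from Theorem~\ref{p}. There is no substantive obstacle here; the only subtlety is the set-theoretic observation that $\Phi$ being an automorphism of $\mathcal{A}$ forces $\Phi(\mathcal{A})=\mathcal{A}$, which is precisely what allows the envelope property to transfer from $\phi$ to $\phi\circ\Phi$.
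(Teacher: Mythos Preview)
Your proposal is correct and follows essentially the same route as the paper: show that $(\mathcal{B},\phi\circ\Phi)$ is again an admissible injective envelope of $\mathcal{A}$ and then invoke the uniqueness clause of Theorem~\ref{p}. Your justification via the equality $(\phi\circ\Phi)(\mathcal{A})=\phi(\mathcal{A})$ is in fact a slightly cleaner way of phrasing the paper's own argument that any $\psi$ fixing $(\phi\circ\Phi)(\mathcal{A})$ must fix $\phi(\mathcal{A})$.
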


\begin{proof}
The pair $(\mathcal{B},\phi \circ \Phi )\ $ is an injective admissible extension of $\mathcal{A}$. Moreover, if $\psi:\mathcal{B\rightarrow B}$
is a unital admissible local $\mathcal{CP}$-map from $\mathcal{B}$ to $
\mathcal{B}$ which fixes each element in $\left( \phi \circ \Phi \right) (
\mathcal{A})$, then since $\Phi $ is a unital local isometric $\ast $
-isomorphism, $\psi $ fixes each element in $\phi (\mathcal{A})$, and so $
\psi =id_{\mathcal{B}}$. Therefore, $(\mathcal{B},\phi \circ \Phi )\ $ is
an admissible injective envelope for $\mathcal{A}$, and then there exists a
unique unital local isometric $\ast $-automorphism $\widehat{\Phi }$ of $\mathcal{B}$ such that $\widehat{\Phi }\circ \phi =\phi \circ \Phi $.
\end{proof}

\begin{example}
Let $\lbrace\mathcal{H}; \mathcal{E}=\lbrace \mathcal{H}_{n} \rbrace_{n \geq 1}; 
\mathcal{D}_{\mathcal{E}}\rbrace$ be a Fr\'{e}chet quantized domain in the
Hilbert space $\mathcal{H}$. We denote by $B(\mathcal{H})$ the $C^{\ast }$
-algebra of all bounded linear operators on $\mathcal{H}$, and by $K(
\mathcal{H})$ the $C^{\ast }$-algebra of all compact operators on $\mathcal{H}$. For each $\xi ,\eta \in \mathcal{D}_{\mathcal{E}}$, the rank one
operator $\theta _{\xi ,\eta }:\mathcal{D}_{\mathcal{E}}\rightarrow \mathcal{%
D}_{\mathcal{E}}, \theta _{\xi ,\eta }\left( \zeta \right) =\xi \left\langle
\eta ,\zeta \right\rangle $ is an element in $C^{\ast }(\mathcal{D}_{\mathcal{E}})$. The closure of the linear space generated by the rank one
operators is a locally $C^{\ast}$-subalgebra of $C^{\ast }(\mathcal{D}_{\mathcal{E}})$, denoted by $K^{\ast }(\mathcal{D}_{\mathcal{E}})$ (for more
details see \cite{KP} and \cite{I}). It is known that $B(\mathcal{H})$ is an
injective envelope for $K(\mathcal{H})$. We have a similar result in the
locally convex case.

For each $n\geq 1$, $\mathcal{P}_{n}$ denotes the orthogonal projection of $
\mathcal{H}$ onto $\mathcal{H}_{n}$. Then $T\in C^{\ast }(\mathcal{D}_{\mathcal{E}})$ if and only if 
\begin{equation*}
T=\sum\limits_{n=1}^{\infty } \left( \text{id}_{\mathcal{H}}-\mathcal{P}_{n-1}\right) \mathcal{P}_{n}T\left( \text{id}_{\mathcal{H}}-\mathcal{P}
_{n-1}\right) \mathcal{P}_{n}\restriction _{\mathcal{D}_{\mathcal{E}}},
\end{equation*}
where $\mathcal{P}_{0}=0$ \cite[Proposition 4.2]{D}. Therefore, for each $
n\geq 1$, 
\begin{equation*}
\left( C^{\ast }(\mathcal{D}_{\mathcal{E}})\right) _{n}=B(\mathcal{H}
_{1})\oplus B(\left( \mathcal{H}_{1}\right) ^{\bot }\cap \mathcal{H}
_{2})\oplus \cdot \cdot \cdot \oplus B(\left( \mathcal{H}_{n-1}\right)
^{\bot }\cap \mathcal{H}_{n})
\end{equation*}
and 
\begin{equation*}
\left( K^{\ast }(\mathcal{D}_{\mathcal{E}})\right) _{n}=K(\mathcal{H}
_{1})\oplus K(\left( \mathcal{H}_{1}\right) ^{\bot }\cap \mathcal{H}
_{2})\oplus \cdot \cdot \cdot \oplus K(\left( \mathcal{H}_{n-1}\right)
^{\bot }\cap \mathcal{H}_{n}).
\end{equation*}
It is known that the injective envelope of the direct sum $\mathcal{A}
_{1}\oplus \mathcal{A}_{2}$ of the $C^{\ast }$-algebras $\mathcal{A}_{1}$
and $\mathcal{A}_{2}$ is the direct sum $\mathcal{I}(\mathcal{A}
_{1})\oplus \mathcal{I}(\mathcal{A}_{2})$ of the injective envelopes $
\mathcal{I}(\mathcal{A}_{1})$ and $\mathcal{I}(\mathcal{A}_{2})$ of $
\mathcal{A}_{1}$ and $\mathcal{A}_{2}$, respectively. Therefore, 

\begin{align*}
\mathcal{I}\left( \left( K^{\ast }(\mathcal{D}_{\mathcal{E}})\right)_{n}\right)  
&= \mathcal{I}\left( K(\mathcal{H}_{1})\right) \oplus \mathcal{I}\left( K(\left( \mathcal{H}_{1}\right)^{\bot }\cap \mathcal{H}_{2})\right) \oplus \cdots \oplus \mathcal{I}\left( K(\left( \mathcal{H}_{n-1}\right)^{\bot }\cap \mathcal{H}_{n})\right)  \\
&= B(\mathcal{H}_{1})\oplus B(\left( \mathcal{H}_{1}\right)^{\bot }\cap \mathcal{H}_{2})\oplus \cdots \oplus B(\left( \mathcal{H}_{n-1}\right)^{\bot }\cap \mathcal{H}_{n}) \\
&= \left( C^{\ast }(\mathcal{D}_{\mathcal{E}})\right)_{n}.
\end{align*}


Let $\varphi:C^{\ast }(\mathcal{D}_{\mathcal{E}})\rightarrow C^{\ast }(
\mathcal{D}_{\mathcal{E}})$ be a unital admissible local $\mathcal{CP}$-map
such that $ \varphi \restriction _{K^{\ast }(\mathcal{D}_{\mathcal{E}})}= id_{K^{\ast }(\mathcal{D}_{\mathcal{E}})}$. Then, for each $n\geq 1$, there exists a unital $\mathcal{CP}$-map $\varphi _{n}:\left( C^{\ast }(\mathcal{D}_{\mathcal{E}})\right) _{n}\rightarrow \left( C^{\ast }(\mathcal{D}_{\mathcal{E}})\right) _{n}$ such that $\varphi _{n}\left( 
T\restriction _{\mathcal{H}_{n}}\right) = \varphi \left( T\right)
\restriction _{\mathcal{H}_{n}}\ \textit{and}\  \varphi _{n}\restriction _{\left(
K^{\ast }(\mathcal{D}_{\mathcal{E}})\right) _{n}}=id_{\left( K^{\ast}(\mathcal{D}_{\mathcal{E}})\right) _{n}}.$ Since $\left( C^{\ast }(\mathcal{D}
_{\mathcal{E}})\right) _{n}$ is an injective envelope of $\left( K^{\ast }(
\mathcal{D}_{\mathcal{E}})\right) _{n},\varphi _{n}=id_{\left( C^{\ast }(\mathcal{D}_{\mathcal{E}})\right) _{n}}$. Therefore, $\varphi =id_{C^{\ast
}(\mathcal{D}_{\mathcal{E}})}$, and so $C^{\ast }(\mathcal{D}_{\mathcal{E}})$
is an admissible injective envelope for $K^{\ast }(\mathcal{D}_{\mathcal{E}
}).$
\end{example}

\begin{example}
It is known that the injective envelope of a $C^{\ast }$-algebra $\mathcal{A}
$ that contains $K(\mathcal{H})$, the $C^{\ast }$-algebra of all compact
operators on a Hilbert space $\mathcal{H}$, is $B(\mathcal{H})$. We have a
similar result for unital Fr\'{e}chet locally $C^{\ast }$-algebras. Let $\lbrace
\mathcal{H}; \mathcal{E}=\lbrace \mathcal{H}_{n} \rbrace_{n \geq 1}; \mathcal{D}_{\mathcal{E}}\rbrace$ be a Fr\'{e}chet quantized domain in the Hilbert space $\mathcal{H}$. If $\mathcal{A}$ is a unital Fr\'{e}chet locally $C^{\ast }$-algebra that contains $K^{\ast }(\mathcal{D}_{\mathcal{E}})$, then $C^{\ast
}(\mathcal{D}_{\mathcal{E}})$ is its admissible injective envelope.
Moreover, $\lbrace\mathcal{I}\left( \mathcal{A}_{n}\right) \rbrace _{n\geq 1}$, where $\mathcal{I}\left( \mathcal{A}_{n}\right) $ is the injective envelope
of $\mathcal{A}_{n}$, is an inverse system of  $C^{\ast}$-algebras and
its inverse limit is an admissible injective envelope of $\mathcal{A}$.
\end{example}

Let $\mathcal{B}$ be a unital locally $C^{\ast }$-algebra whose topology is
defined by the family of $C^{\ast}$-seminorms $\lbrace q_{\lambda }\rbrace _{\lambda
\in \Lambda }$ and let $\varphi :\mathcal{A}\rightarrow \mathcal{B}$ be a unital
linear map such that $q_{\lambda }\left( \varphi \left( a\right) \right)
=p_{\lambda }\left( a\right) $, for all $a\in \mathcal{A}$ and for all $\lambda \in \Lambda $. Then $\varphi \left( \mathcal{A}\right) $, the range
of $\varphi $, is a closed subspace of $\mathcal{B}$ and there exists a
unital linear map $\varphi ^{-1}:$ $\varphi \left( \mathcal{A}\right)
\rightarrow \mathcal{A}$ such that $\varphi ^{-1}\circ \varphi =id_{\mathcal{A}}$. Moreover, for each $\lambda \in \Lambda $,
\begin{equation*}
p_{\lambda }\left( \varphi ^{-1}\left( b\right) \right) =p_{\lambda }\left(
\varphi ^{-1}\left( \varphi \left( a\right) \right) \right) =p_{\lambda
}\left( a\right) =q_{\lambda }\left( \varphi \left( a\right) \right)
=q_{\lambda }\left( b\right),
\end{equation*}
for all $b\in $ $\varphi \left( \mathcal{A}\right) $. If $\varphi $ and 
$\varphi ^{-1}$ are unital local $\mathcal{CP}$-maps we say that $\varphi $ is \textit{local completely isometric}.

Note that if $(\mathcal{B},\phi )$ is an admissible extension of $\mathcal{A}$, then $\phi $ is a local completely isometric map.

\begin{definition}
An admissible extension $(\mathcal{B},\phi )$ of a unital locally $C^{\ast }$-algebra $\mathcal{A}$ is essential if for any unital admissible local
completely positive map $\varphi $ from $\mathcal{B}$ to another unital
locally $C^{\ast }$-algebra $\mathcal{C}$, $\varphi $ is local completely
isometric whenever $\varphi \circ \phi $ is local completely isometric.
\end{definition}

As in the case of $C^{\ast }$-algebras we obtain a characterization of an admissible injective envelope for a unital Fr\'{e}chet locally $C^{\ast }$-algebra in terms of its admissible extensions.

\begin{theorem}\label{pp}
 An admissible extension $(\mathcal{B},\phi )$ of a unital Fr\'{e}chet locally $C^{\ast }$-algebra $\mathcal{A}$ is an admissible injective
envelope if and only if it is admissible injective and essential.
\end{theorem}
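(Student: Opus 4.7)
The plan is to follow Hamana's $C^{\ast}$-algebraic scheme, adapted throughout to the locally $C^{\ast}$ setting so that all maps stay in the admissible category.

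For the forward implication, assume $(\mathcal{B},\phi)$ is an admissible injective envelope. Admissible injectivity of $\mathcal{B}$ is immediate from the definition. For essentiality, let $\varphi\colon\mathcal{B}\to\mathcal{C}$ be a unital admissible local $\mathcal{CP}$-map with $\varphi\circ\phi$ local completely isometric. The first step is to upgrade $(\varphi\circ\phi)^{-1}\colon\varphi(\phi(\mathcal{A}))\to\mathcal{A}$ from unital local $\mathcal{CP}$ to unital admissible local $\mathcal{CP}$: since $\varphi\circ\phi$ preserves each seminorm $p_{\lambda}$, it sends the $\lambda$-null ideal to the $\lambda$-null ideal, which both yields a well-defined induced inverse at each Arens--Michael quotient level and, together with Hamana's observation that a unital isometric $\mathcal{CP}$-map with $\mathcal{CP}$-inverse is a $\ast$-isomorphism, forces the descent of $\mathcal{CP}$ through these quotients. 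With this in hand, $\phi\circ(\varphi\circ\phi)^{-1}\colon\varphi(\phi(\mathcal{A}))\to\mathcal{B}$ is a unital admissible local $\mathcal{CP}$-map, and admissible injectivity of $\mathcal{B}$ extends it to $\widetilde{\psi}\colon\mathcal{C}\to\mathcal{B}$ unital admissible local $\mathcal{CP}$. Then $\widetilde{\psi}\circ\varphi$ is a unital admissible local $\mathcal{CP}$-map from $\mathcal{B}$ to itself fixing $\phi(\mathcal{A})$, so by the rigidity of the envelope it equals $\mathrm{id}_{\mathcal{B}}$. The sandwich $p_{\lambda}(b)=p_{\lambda}(\widetilde{\psi}(\varphi(b)))\le q_{\lambda}(\varphi(b))\le p_{\lambda}(b)$ and the identification $\varphi^{-1}=\widetilde{\psi}|_{\varphi(\mathcal{B})}$ then yield that $\varphi$ is local completely isometric.

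For the converse, assume $\mathcal{B}$ is admissible injective and $(\mathcal{B},\phi)$ is essential. Let $(\mathcal{B}_{0},\phi_{0})$ be the admissible injective envelope of $\mathcal{A}$ produced by Theorem \ref{p}. Applying admissible injectivity of $\mathcal{B}$ to $\phi\circ\phi_{0}^{-1}$ and of $\mathcal{B}_{0}$ to $\phi_{0}\circ\phi^{-1}$ gives unital admissible local $\mathcal{CP}$-extensions $\widetilde{\phi}\colon\mathcal{B}_{0}\to\mathcal{B}$ and $\widetilde{\phi}_{0}\colon\mathcal{B}\to\mathcal{B}_{0}$. The composition $\widetilde{\phi}_{0}\circ\widetilde{\phi}$ fixes $\phi_{0}(\mathcal{A})$, hence equals $\mathrm{id}_{\mathcal{B}_{0}}$ by the rigidity of $(\mathcal{B}_{0},\phi_{0})$. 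Since $\widetilde{\phi}_{0}\circ\phi=\phi_{0}$ is local completely isometric, essentiality of $(\mathcal{B},\phi)$ gives $\widetilde{\phi}_{0}$ local completely isometric; combined with the right-inverse $\widetilde{\phi}$, Lemma \ref{U} promotes $\widetilde{\phi}_{0}$ to a local isometric $\ast$-isomorphism with $\widetilde{\phi}_{0}^{-1}=\widetilde{\phi}$, so $\widetilde{\phi}\circ\widetilde{\phi}_{0}=\mathrm{id}_{\mathcal{B}}$ as well. To transfer rigidity, given any unital admissible local $\mathcal{CP}$-map $\psi\colon\mathcal{B}\to\mathcal{B}$ fixing $\phi(\mathcal{A})$, the conjugate $\widetilde{\phi}_{0}\circ\psi\circ\widetilde{\phi}$ fixes $\phi_{0}(\mathcal{A})$ and therefore equals $\mathrm{id}_{\mathcal{B}_{0}}$ by rigidity of $\mathcal{B}_{0}$; sandwiching by $\widetilde{\phi}$ and $\widetilde{\phi}_{0}$ yields $\psi=\mathrm{id}_{\mathcal{B}}$.

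The main obstacle is the first step of the forward direction: admissible injectivity of $\mathcal{B}$ only extends admissible local $\mathcal{CP}$-maps, so one cannot simply invoke the weaker injectivity to extend $\phi\circ(\varphi\circ\phi)^{-1}$ as a plain local $\mathcal{CP}$-map and still apply the rigidity, which is stated only for admissible maps. Showing that seminorm preservation upgrades the inverse $(\varphi\circ\phi)^{-1}$ to an admissible local $\mathcal{CP}$-map, by descending to the Arens--Michael quotients via $\lambda$-null preservation and exploiting the $\ast$-monomorphism consequence of having a $\mathcal{CP}$-inverse at each quotient level, is the key technical point that keeps the whole argument inside the admissible category.
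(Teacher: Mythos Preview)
Your proof is correct and follows essentially the paper's approach. The forward direction is identical to the paper's: form $\phi\circ(\varphi\circ\phi)^{-1}$, extend it via admissible injectivity of $\mathcal{B}$ to some $\psi\colon\mathcal{C}\to\mathcal{B}$, and use rigidity to get $\psi\circ\varphi=\mathrm{id}_{\mathcal{B}}$. Your careful discussion of why $(\varphi\circ\phi)^{-1}$ is \emph{admissible} local $\mathcal{CP}$ (not merely local $\mathcal{CP}$) is actually more than the paper offers; the paper simply asserts that $\phi\circ(\varphi\circ\phi)^{-1}$ is admissible local $\mathcal{CP}$ and moves on.

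For the converse, the paper organizes things slightly differently but uses the same ingredients. The paper builds only one map $\varphi\colon\mathcal{B}\to\mathcal{C}$ (into the known envelope $(\mathcal{C},\chi)$) by extending $\chi\circ\phi^{-1}$, applies essentiality to make $\varphi$ local completely isometric, then extends $\varphi^{-1}$ back using admissible injectivity of $\mathcal{B}$, and uses rigidity of $(\mathcal{C},\chi)$ together with Lemma~\ref{U} to conclude that $\varphi$ is a local isometric $\ast$-isomorphism; the rigidity of $(\mathcal{B},\phi)$ is then implicit via this isomorphism. You instead extend in both directions simultaneously, use rigidity of the envelope to get one composite equal to the identity, use essentiality for injectivity, invoke Lemma~\ref{U}, and then spell out the rigidity transfer by conjugation. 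The two routes are equivalent in substance; yours is a bit more explicit about the final rigidity step, while the paper's is marginally more economical in the number of extensions used.
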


\begin{proof}
The proof of this theorem is similar to the proof of \cite[Proposition 4.7.]{H}. First, we assume that $(\mathcal{B},\phi )$ is an admissible
injective envelope for $\mathcal{A}$. Then $(\mathcal{B},\phi )\ $ is an admissible injective extension. Let $\mathcal{C}$ be a locally $C^{\ast }$-algebra and $\varphi:\mathcal{B}\rightarrow \mathcal{C} $ be a unital
admissible local $\mathcal{CP}$-map such that $\varphi \circ \phi $ is local
completely isometric. Then $\phi \circ \left( \varphi \circ \phi \right)
^{-1}:\left( \varphi \circ \phi \right) \left( \mathcal{A}\right)
\rightarrow \mathcal{B}$ is a unital admissible local $\mathcal{CP}$-map,
and since $\mathcal{B}$ is admissible injective, there exists a unital
admissible local $\mathcal{CP}$-map $\psi :\mathcal{C}\rightarrow \mathcal{B}
$ such that
\begin{equation*}
 \psi \restriction _{\left( \varphi \circ \phi \right) \left( \mathcal{A}\right) }=\phi \circ \left( \varphi \circ \phi \right) ^{-1}.
\end{equation*}
From the last relation, we deduce that 
\begin{equation*}
 \psi \circ \varphi \restriction _{\phi \left( \mathcal{A}\right) }=\text{id}_{\phi \left( \mathcal{A}\right) },
\end{equation*}
and since $(\mathcal{B},\phi )$ is an admissible injective envelope for $\mathcal{A}$, it follows that $\psi \circ \varphi $ $=id_{\mathcal{B}}$.
Therefore, $\varphi $ is local completely isometric.

Conversely, suppose that $(\mathcal{B},\phi )$ is an essential admissible
injective extension of $\mathcal{A}$. Let $(\mathcal{C},\chi )$ be an admissible injective envelope for $\mathcal{A}$. Then there exists a unital admissible local $\mathcal{CP}$-map $\varphi:\mathcal{B}\rightarrow 
\mathcal{C}$ such that $\varphi \circ \phi =\chi $. Consequently, $\varphi
\circ \phi $ is local completely isometric, and since the admissible extension $(\mathcal{B},\phi )$ is essential, $\varphi $ is local completely
isometric. On the other hand, since $\mathcal{B}$ is admissible injective, $
\varphi ^{-1}:\varphi \left( \mathcal{B}\right) \rightarrow \mathcal{B}$
extends to a unital admissible local $\mathcal{CP}$-map $\widetilde{\varphi
^{-1}}:\mathcal{C}\rightarrow \mathcal{B}$. Moreover, $ \varphi \circ 
\widetilde{\varphi ^{-1}}\restriction _{\chi \left( \mathcal{A}\right) }=id
_{\chi \left( \mathcal{A}\right) }$, since
\begin{equation*}
\left( \varphi \circ \widetilde{\varphi ^{-1}}\right) \left( \chi \left(
a\right) \right) =\left( \varphi \circ \widetilde{\varphi ^{-1}}\circ
\varphi \circ \phi \right) \left( a\right) =\left( \varphi \circ \phi
\right) \left( a\right) =\chi \left( a\right),
\end{equation*}
for all $a\in \mathcal{A}$. From this relation and taking into account that 
$(\mathcal{C},\chi )$ is an admissible injective envelope for $\mathcal{A}$,
we conclude that $\varphi \circ \widetilde{\varphi ^{-1}}=id_{\mathcal{C}}$. Therefore, $\varphi $ is bijective, and since $\varphi $ and $\varphi
^{-1}$ are unital admissible local $\mathcal{CP}$-maps, by Lemma \ref{U}, $\varphi $ is a local isometric $\ast $-isomorphism. Consequently, $(\mathcal{B},\phi )$ is an admissible injective envelope for $\mathcal{A}$.
\end{proof}

\begin{corollary}
A unital Fr\'{e}chet locally $C^{\ast}$-algebra $\mathcal{A}$ is admissible
injective if and only if it has no proper essential admissible extension.
\end{corollary}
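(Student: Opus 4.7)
The plan is to deduce this corollary from Theorem \ref{pp}, which already characterizes admissible injective envelopes as essential admissible injective extensions. Throughout, I interpret a proper admissible extension $(\mathcal{B},\phi)$ of $\mathcal{A}$ as one in which $\phi(\mathcal{A})\subsetneq\mathcal{B}$, so a non-proper admissible extension is one where $\phi$ is a local isometric $\ast$-isomorphism.

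For the forward direction, I assume $\mathcal{A}$ is admissible injective and let $(\mathcal{B},\phi)$ be any essential admissible extension. The plan is to produce a unital admissible local $\mathcal{CP}$-inverse for $\phi$. Since $\phi:\mathcal{A}\rightarrow\phi(\mathcal{A})$ is a unital local isometric $\ast$-isomorphism, its inverse $\phi^{-1}:\phi(\mathcal{A})\rightarrow\mathcal{A}$ is a unital admissible local $\mathcal{CP}$-map, and admissible injectivity of $\mathcal{A}$ extends it to a unital admissible local $\mathcal{CP}$-map $\psi:\mathcal{B}\rightarrow\mathcal{A}$. Then $\psi\circ\phi=\operatorname{id}_{\mathcal{A}}$ is local completely isometric, so by the essentiality of $(\mathcal{B},\phi)$ the map $\psi$ itself is local completely isometric. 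Being local completely isometric, $\psi$ is injective, and since $\psi(\phi(\mathcal{A}))=\mathcal{A}$ it is also surjective, hence bijective with unital admissible local $\mathcal{CP}$ inverse. Lemma \ref{U} then upgrades $\psi$ to a local isometric $\ast$-isomorphism, from which $\phi=\psi^{-1}$ is surjective and $(\mathcal{B},\phi)$ is not proper.

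For the reverse direction, I assume $\mathcal{A}$ has no proper essential admissible extension. By Theorem \ref{p}, $\mathcal{A}$ possesses an admissible injective envelope $(\mathcal{B},\phi)$, and by Theorem \ref{pp} this envelope is an essential admissible extension. The standing hypothesis forces $(\mathcal{B},\phi)$ to be non-proper, so $\phi:\mathcal{A}\rightarrow\mathcal{B}$ is a surjective local isometric $\ast$-morphism, hence a local isometric $\ast$-isomorphism. Admissible injectivity is obviously preserved under such isomorphisms, so $\mathcal{A}$ inherits admissible injectivity from $\mathcal{B}$.

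The main subtle point will be the surjectivity of $\psi$ in the forward direction, and the only place Theorem \ref{pp} is actually needed is the reverse direction (to know the envelope is essential). Everything else is essentially bookkeeping once Lemma \ref{U} is at hand.
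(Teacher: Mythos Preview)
Your argument is correct and matches the paper's proof almost verbatim: in the forward direction both you and the paper extend $\phi^{-1}$ by admissible injectivity of $\mathcal{A}$, invoke essentiality to make the extension local completely isometric, deduce bijectivity, and finish with Lemma~\ref{U}; in the reverse direction both invoke Theorem~\ref{p} and Theorem~\ref{pp} (the paper phrases it as a contrapositive, you phrase it directly, which is immaterial). One minor expositional point: you assert $\psi^{-1}$ is \emph{admissible} local $\mathcal{CP}$ before identifying $\psi^{-1}=\phi$, but the definition of local completely isometric only gives local $\mathcal{CP}$; the admissibility follows only after the identification $\psi^{-1}=\phi$ (which is immediate from $\psi\circ\phi=\operatorname{id}_{\mathcal{A}}$ and bijectivity of $\psi$), so you should swap the order of those two sentences.
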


\begin{proof}
First, we assume that $\mathcal{A}$ is admissible injective. Let $\left( 
\mathcal{B},\phi \right) $ be an essential admissible extension for $\mathcal{A}$. Since $\phi $ is a local completely isometric linear map and
 $\mathcal{A}$ is admissible injective, there exists a unital
admissible local $\mathcal{CP}$-map $\widetilde{\phi ^{-1}}:\mathcal{B}
\rightarrow \mathcal{A}$ such that $\widetilde{\phi ^{-1}}\circ \phi =id_{\mathcal{A}}$. Consequently, $\widetilde{\phi ^{-1}}\circ \phi $ is a local
completely isometric linear map, and since $\left( \mathcal{B},\phi \right) $
is an essential admissible extension for $\mathcal{A},$ $\widetilde{\phi
^{-1}}$ is a local completely isometric linear map too, and so, there exists a unital admissible local $\mathcal{CP}$-map $\left( \widetilde{\phi ^{-1}}\right) ^{-1}\  $ such that $\left( 
\widetilde{\phi ^{-1}}\right) ^{-1}\circ \widetilde{\phi ^{-1}}=id_{
\mathcal{B}}$. Then, $\left( \widetilde{\phi ^{-1}}\right) ^{-1}=\phi .$
Hence, $\phi $ is bijective, and by Lemma \ref{U}, $\phi $ is a local
isometric $\ast $-isomorphism.

To prove the converse implication, suppose that $\mathcal{A}$ is not admissible injective and let $\left( \mathcal{B},\phi \right) $ be an admissible injective envelope for $\mathcal{A}$. Then, by Theorem \ref{pp}, $\left( \mathcal{B},\phi \right) $ is an essential admissible  extension for $\mathcal{A}$, which is a contradiction.
\end{proof}

\section{Admissible injective envelopes for Fr\'{e}chet locally $C^{\ast }$-algebras via inverse limits of injective envelopes for $C^{\ast }$-algebras}

In this section we investigate how an admissible injective envelope for a unital Fr\'{e}chet locally $C^{\ast}$-algebra can be realized via an
inverse limit of injective envelopes for $C^{\ast}$-algebras.\\

Let $\mathcal{A}$ be a unital Fr\'{e}chet locally $C^{\ast }$-algebra and $\lbrace
\mathcal{A}_{n}; \pi _{mn}^{\mathcal{A}}:\mathcal{A}_{m}\rightarrow \mathcal{A}_{n}, m\geq n\rbrace$ be its Arens-Michael decomposition. By Remark \ref{i}, $\mathcal{A}$ can be regard as a locally $C^{\ast }$-subalgebra of a unital admissible injective Fr\'{e}chet locally $C^{\ast }$-algebra $\mathcal{B}$, and then, by Theorem \ref{p}, there exists a minimal
admissible $\mathcal{A}$-projection $\varphi $ on $\mathcal{B}$ and $C^{\ast
}\left( \varphi \right) $ is an admissible injective envelope for $\mathcal{A}$.

Since $\varphi :\mathcal{B}\rightarrow \mathcal{B}$ is a unital admissible local completely positive map, by Remark \ref{a}, $\varphi =\varprojlim\limits_{n}\varphi _{n},$ where $\varphi _{n}$ is a unital
completely positive map on $\mathcal{B}_{n}$ such that 
\begin{equation*}
\varphi _{n}\left( \pi _{n}^{\mathcal{B}}\left( b\right) \right) =\pi _{n}^{
\mathcal{B}}\left( \varphi \left( b\right) \right), 
\end{equation*}
for all $b\in \mathcal{B}, n\geq 1$. It is easy to check that for each $n\geq
1, \varphi _{n}$ is an $\mathcal{A}_{n}$-projection on $\mathcal{B}_{n}.$

According to Remark \ref{Help1}, since $\varphi $ is a minimal admissible $\mathcal{A}$-projection  on $\mathcal{B}, \lbrace p_{n}\circ \varphi
\rbrace _{n\geq 1}$ is a minimal family of $\mathcal{A}$ -seminorms on $\mathcal{B}
$, and then, by the proof of Proposition \ref{4}, for each $n\geq 1,$ 
\begin{equation*}
\left( p_{n}\circ \varphi \right) \left( \cdot \right) =p_{n}^{\min }\left(
\cdot \right) =p_{n,\mathcal{B}_{n}}^{\min }\left( \pi _{n}^{\mathcal{B}
}\left( \cdot \right) \right), 
\end{equation*}
where $p_{n,\mathcal{B}_{n}}^{\min }$ is the minimal $\mathcal{A}_{n}$-seminorm on $\mathcal{B}_{n}$. On the other hand, for each $n\geq 1,$
\begin{equation*}
p_{n,\mathcal{B}_{n}}^{\min }\left( \pi _{n}^{\mathcal{B}}\left( \cdot
\right) \right) =\left( p_{n}\circ \varphi \right) \left( \cdot \right)
=\left\Vert \varphi _{n}\left( \pi _{n}^{\mathcal{B}}\left( \cdot \right)
\right) \right\Vert _{\mathcal{B}_{n}}.
\end{equation*}
It follows that $\varphi _{n}$ is a minimal $\mathcal{A}_{n}$-projection on $\mathcal{B}_{n}$ \cite[Remark 3.9]{H}. Therefore, for
each $n\geq 1$, $C^{\ast }\left( \varphi _{n}\right) $ is an injective
envelope of $\mathcal{A}_{n}$ \cite[Theorem 3.4]{H}. Since

\begin{eqnarray*}
\pi _{mn}^{\mathcal{B}}\left( \pi _{m}^{\mathcal{B}}\left( b_{1}\right)
\cdot \pi _{m}^{\mathcal{B}}\left( b_{2}\right) \right)  &=&\pi _{mn}^{
\mathcal{B}}\left( \varphi _{m}\left( \pi _{m}^{\mathcal{B}}\left(
b_{1}\right) \pi _{m}^{\mathcal{B}}\left( b_{2}\right) \right) \right)  \\
&=&\pi _{mn}^{\mathcal{B}}\left( \varphi _{m}\left( \pi _{m}^{\mathcal{B}
}\left( b_{1}b_{2}\right) \right) \right) =\pi _{mn}^{\mathcal{B}}\left( \pi
_{m}^{\mathcal{B}}\left( \varphi \left( b_{1}b_{2}\right) \right) \right)  \\
&=&\pi _{n}^{\mathcal{B}}\left( \varphi \left( b_{1}b_{2}\right) \right)
=\varphi _{n}\left( \pi _{n}^{\mathcal{B}}\left( b_{1}b_{2}\right) \right) 
\\
&=&\varphi _{n}\left( \pi _{n}^{\mathcal{B}}\left( b_{1}\right) \pi _{n}^{\mathcal{B}}\left( b_{2}\right) \right) =\pi _{n}^{\mathcal{B}}\left(
b_{1}\right) \cdot \pi _{n}^{\mathcal{B}}\left( b_{2}\right)  \\
&=&\pi _{mn}^{\mathcal{B}}\left( \pi _{m}^{\mathcal{B}}\left( b_{1}\right)
\right) \cdot \pi _{mn}^{\mathcal{B}}\left( \pi _{m}^{\mathcal{B}}\left(
b_{2}\right) \right), 
\end{eqnarray*}
for all $b_{1},b_{2}\in Im\left( \varphi \right) $, we deduce that $\left\lbrace C^{\ast }\left( \varphi _{n}\right); \pi _{mn}^{\mathcal{B}
}\restriction _{C^{\ast}\left( \varphi _{m}\right) }; m\geq n \right\rbrace$ is an
inverse system of $C^{\ast}$-algebras. Moreover, the map $a\mapsto \left(
\varphi _{n}\left( \pi_{n}^{\mathcal{B}}(a)\right) \right) _{n\geq 1}$ from $C^{\ast }\left(
\varphi \right) $ to $\varprojlim\limits_{n}C^{\ast }\left(
\varphi _{n}\right) $ is a unital local isometric $\ast $-morphism.
Therefore, the admissible injective envelope of a unital Fr\'{e}chet locally 
$C^{\ast }$-algebra can be identified with the inverse limit of the
injective envelopes for its Arens-Michael decomposition.

If we denote by $\mathcal{I}(\mathcal{A})$ the admissible injective envelope for $\mathcal{A}$, then we have the following result:

\begin{proposition}
Let $\mathcal{A}$ be a unital Fr\'{e}chet locally 
$C^{\ast }$-algebra. Then 
\begin{itemize}
\item[(1)] For each $n\geq 1$, the $C^*$-algebras $\left( \mathcal{I}(\mathcal{A}) \right)_{n}$ and $\left( \mathcal{I}(\mathcal{A}_{n}) \right)$ are isomorphic.
\item[(2)] $\mathcal{I}(\mathcal{A})=\varprojlim\limits_{n}\mathcal{I}(\mathcal{A}_{n}).$
\item[(3)] If $\mathcal{A}$ is a Fr\'{e}chet locally $W^*$-algebra, then $\mathcal{A}$ is injective if and only if the $C^*$-algebras $\mathcal{A}_{n}, n\geq 1$, are injective.
\end{itemize}

\end{proposition}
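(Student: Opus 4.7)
The plan is to leverage the explicit construction of the admissible injective envelope established in the discussion immediately preceding the statement. Recall that $\mathcal{I}(\mathcal{A}) = C^{\ast}(\varphi)$ for some minimal admissible $\mathcal{A}$-projection $\varphi$ on an admissible injective extension $\mathcal{B}$ of $\mathcal{A}$, and that, by Remark \ref{a}, $\varphi = \varprojlim_n \varphi_n$ with each $\varphi_n : \mathcal{B}_n \to \mathcal{B}_n$ a unital completely positive map satisfying $\varphi_n \circ \pi_n^{\mathcal{B}} = \pi_n^{\mathcal{B}} \circ \varphi$. The discussion before the statement has already identified $\varphi_n$ as a minimal $\mathcal{A}_n$-projection, so by Hamana's theorem $C^{\ast}(\varphi_n) = \mathcal{I}(\mathcal{A}_n)$.

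For (1), I will exhibit an explicit $\ast$-isomorphism
\[
\Psi_n : (C^{\ast}(\varphi))_n \longrightarrow C^{\ast}(\varphi_n), \qquad \pi_n^{C^{\ast}(\varphi)}(b) \longmapsto \pi_n^{\mathcal{B}}(b).
\]
Well-definedness and isometry follow from Proposition \ref{P1}, which says that the $C^{\ast}$-seminorm on $C^{\ast}(\varphi)$ is the restriction of $p_n$, combined with $p_n(b) = \Vert \pi_n^{\mathcal{B}}(b) \Vert_{\mathcal{B}_n}$. Surjectivity onto $C^{\ast}(\varphi_n) = Im(\varphi_n)$ uses the intertwining $\pi_n^{\mathcal{B}} \circ \varphi = \varphi_n \circ \pi_n^{\mathcal{B}}$, which gives $\pi_n^{\mathcal{B}}(Im(\varphi)) = Im(\varphi_n)$. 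Multiplicativity uses that the product in $C^{\ast}(\varphi)$ is $b_1 \cdot b_2 = \varphi(b_1 b_2)$, so $\Psi_n$ sends it to $\pi_n^{\mathcal{B}}(\varphi(b_1 b_2)) = \varphi_n(\pi_n^{\mathcal{B}}(b_1)\,\pi_n^{\mathcal{B}}(b_2))$, which is exactly the product in $C^{\ast}(\varphi_n)$.

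Part (2) is then immediate: as a unital Fr\'echet locally $C^{\ast}$-algebra, $\mathcal{I}(\mathcal{A}) = C^{\ast}(\varphi)$ agrees with the inverse limit of its own Arens-Michael decomposition, so $\mathcal{I}(\mathcal{A}) = \varprojlim_n (\mathcal{I}(\mathcal{A}))_n \cong \varprojlim_n \mathcal{I}(\mathcal{A}_n)$ by (1). The fact that the connecting maps $\pi_{mn}^{\mathcal{B}}\restriction_{C^{\ast}(\varphi_m)}$ form an inverse system matching the $\Psi_n$ has been verified in the paragraph preceding the statement.

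For (3), I combine (1)--(2) with Remark \ref{Rem2}, which asserts that injectivity and admissible injectivity coincide for a Fr\'echet locally $W^{\ast}$-algebra. If $\mathcal{A}$ is injective, then it is admissible injective, whence $\mathcal{I}(\mathcal{A}) = \mathcal{A}$, so (1) yields $\mathcal{A}_n \cong \mathcal{I}(\mathcal{A}_n)$; by Hamana's characterization, a $C^{\ast}$-algebra coincides with its injective envelope iff it is injective, so each $\mathcal{A}_n$ is injective. Conversely, if each $W^{\ast}$-algebra $\mathcal{A}_n$ is injective, then $\mathcal{I}(\mathcal{A}_n) = \mathcal{A}_n$, and (2) gives $\mathcal{I}(\mathcal{A}) = \varprojlim_n \mathcal{A}_n = \mathcal{A}$, so $\mathcal{A}$ is admissible injective, hence injective by Remark \ref{Rem2}. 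The main obstacle is the clean identification in (1): verifying that the Arens-Michael quotient of $C^{\ast}(\varphi)$ really matches $C^{\ast}(\varphi_n)$ on the nose, with the right multiplication and $C^{\ast}$-seminorm. Once this is secured via Proposition \ref{P1} and the intertwining relation, parts (2) and (3) follow formally.
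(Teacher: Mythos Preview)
Your proof is correct and follows essentially the same approach as the paper. The map $\Psi_n$ you define is identical to the paper's $\phi_n$ (once one unwinds $\pi_n^{C^{\ast}(\varphi)}(b) = \varphi(b') + \ker p_n\!\restriction_{C^{\ast}(\varphi)}$ and $\pi_n^{\mathcal{B}}(b) = \varphi_n(\pi_n^{\mathcal{B}}(b'))$ via the intertwining relation), and parts (2) and (3) are handled the same way in both arguments.
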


\begin{proof}
According to the above comments, there exist a unital admissible injective Fr\'{e}chet locally $C^*$-algebra $\mathcal{B}$ such that $\mathcal{A}$ can be identified with a locally $C^*$-subalgebra of $\mathcal{B}$, and a minimal admissible $\mathcal{A}$-projection $\varphi$ on $\mathcal{B}$ such that $\mathcal{I}(\mathcal{A})$ can be identified with the range of $\varphi$, $C^*(\varphi)$. We see that $\varphi=\varprojlim\limits_{n}\varphi_{n}$, where $\varphi_{n}$ is a minimal $\mathcal{A}_{n}$-projection on $\mathcal{B}_{n}$ and $C^*(\varphi)=\varprojlim\limits_{n} C^*(\varphi_{n})$.

$(1)$ Let $n\geq 1$. To prove that the $C^*$-algebras $\left( \mathcal{I}(\mathcal{A}) \right)_{n}$ and $\mathcal{I}(\mathcal{A}_{n})$ are isomorphic it is sufficient to show that the $C^*$-algebras $C^*(\varphi)/\ker p_{n}\restriction_{C^*(\varphi)}$ and $C^*(\varphi_{n})$ are isomorphic. We consider the map $\phi_{n}:C^*(\varphi)/\ker p_{n}\restriction_{C^*(\varphi)}\rightarrow C^*(\varphi_{n})$ given by $$\phi_{n}\left( \varphi(b)+\ker p_{n}\restriction_{C^*(\varphi)} \right)=\varphi_{n}\left( \pi_{n}^{\mathcal{B}}(b) \right).$$
Since $\varphi_{n}\circ\pi_{n}^{\mathcal{B}}=\pi_{n}^{\mathcal{B}}\circ\varphi$, the map $\phi_{n}$ is correct defined. Clearly, $\phi_{n}$ is a bijective linear map. From $$\phi_{n}\left( \varphi(b)^*+\ker p_{n}\restriction_{C^*(\varphi)} \right)=\phi_{n}\left( \varphi(b^*)+ \ker p_{n}\restriction_{C^*(\varphi)} \right)=\varphi_{n}\left( \pi_{n}^{\mathcal{B}}(b^*) \right)$$
$$=\varphi_{n}\left( \pi_{n}^{\mathcal{B}}(b) \right)^*=\left(  \phi_{n}\left( \varphi(b)^*+\ker p_{n}\restriction_{C^*(\varphi)} \right) \right)^*$$
and
$$\phi_{n}\left( \varphi(b_{1})\cdot\varphi(b_{2})+ \ker p_{n}\restriction_{C^*(\varphi)} \right)=\phi_{n}\left( \varphi\left(  \varphi(b_{1})\varphi(b_{2})  \right)+ \ker p_{n}\restriction_{C^*(\varphi)} \right)$$
$$=\varphi_{n}\left( \pi_{n}^{\mathcal{B}}\left( \varphi(b_{1})\varphi(b_{2}) \right)  \right)=\varphi_{n}\left( \pi_{n}^{\mathcal{B}}\left(\varphi(b_{1}) \right)\pi_{n}^{\mathcal{B}}\left(\varphi(b_{2}) \right)  \right)=\varphi_{n}\left(\varphi_{n}\left( \pi_{n}^{\mathcal{B}}(b_{1}) \right)\varphi_{n}\left( \pi_{n}^{\mathcal{B}}(b_{2}) \right)     \right)$$
$$=\varphi_{n}\left( \pi_{n}^{\mathcal{B}}(b_{1})  \right)\cdot\varphi_{n}\left( \pi_{n}^{\mathcal{B}}(b_{2})  \right)=\phi_{n}\left( \varphi(b_{1})+ \ker p_{n}\restriction_{C^*(\varphi)} \right)\cdot\phi_{n}\left( \varphi(b_{2})+ \ker p_{n}\restriction_{C^*(\varphi)} \right)$$
for all $b, b_{1}, b_{2}\in\mathcal{B} $, we deduce that $\phi_{n}$ is a $*$-morphism. Therefore, $\phi_{n}$ is a $C^*$-isomorphism.

$(2)$ It follows from $(1)$.

$(3)$ Suppose that $\mathcal{A}$ is a Fr\'{e}chet locally $W^*$-algebra. Then $\mathcal{A}$ is injective if and only if it is admissible injective (see Remark \ref{Rem2}). On the other hand, by $(1)$ and $(2)$, $\mathcal{A}$ is admissible injective if and only if for each $n\geq 1$, the $C^*$-algebras $\mathcal{A}_{n}$ and $\mathcal{I}(\mathcal{A}_{n})$ are isomorphic. Consequently, $\mathcal{A}$ is injective if and only if the $C^*$-algebras $\mathcal{A}_{n}$, $n\geq 1$, are injective.

\end{proof}

\subsection*{Funding} The first author was supported by a grant from UEFISCDI, a project
number PN-III-P4-PCE-2021-0282

\subsection*{Data Availability} Data sharing not applicable to this article as no datasets were generated or analysed during the current study.
\subsection*{Declarations}
\subsection*{Conflicts of interests} The authors have no relevant financial or non-financial interests to disclose.


\begin{thebibliography}{99}
\bibitem{A} C. Apostol, $\mathit{b}^{\mathit{\ast }}$\textit{-algebras and
their representation}, J. London Math. Soc. (2) 3 (1971), 30--38.

\bibitem{CS} M.-D. Choi, \textit{A Schwarz type inequality for positive
linear maps on} $C^{\ast }$\textit{-algebras, }Illinois J. Math., \textbf{18}%
(1974), 565-574.

\bibitem{C} H. B. Cohen, \textit{Injective envelopes of Banach spaces},
Bull. Amer. Math. Soc., \textbf{70}(1964),723-726.

\bibitem{D} A. Dosiev, \textit{Local operator spaces, unbounded operators
and multinormed }$C^{\ast }$\textit{-algebras, }J. Funct. Anal., \textbf{255}%
(2008), 1724-1760.

\bibitem{D1} A. Dosiev, \textit{Multinormed }$W$\textit{$^{\ast }$-algebras
and unbounded operators}. Proc. Amer. Math. Soc., \textbf{140}(2012),
12,4187-4202.

\bibitem{DD} A. Dosi, \textit{Injectivity in the quantum space framework}, Oper. Matrices, \textbf{8}(2014),4, 1013-1039.

\bibitem{F1} M. Fragoulopoulou, \textit{On locally }$W$$^{\ast }$\textit{%
-algebras}, Yokohama Math. J., 34 (1--2) (1986) 35-51.

\bibitem{Fr} M. Fragoulopoulou, \textit{Topological algebras with involution}%
, Elsevier, 2005.

\bibitem{G} A. Gheondea, \textit{On propagation of fixed points of quantum
operations and beyond, }https://doi.org/10.48550/arXiv.1611.04742.

\bibitem{H} M. Hamana, \textit{Injective envelopes for }$C^{\ast }$\textit{%
-algebras}, J. Math. Soc. Japan,\textbf{\ 32}(1979),1, 181-196.

\bibitem{I} A.~Inoue, \textit{Locally }$C^{\ast }$\textit{-algebra}, Mem.
Fac. Sci., Kyushu Univ., Ser. A \textbf{25} (1971),2, 197--235.

\bibitem{J1} M. Joi\c{t}a, \textit{Unbounded local completely contractive
maps, }Bull. Iranian Math. Soc. \textbf{48} (2022), 6, 4015--4028.

\bibitem{J} M. Joi\c{t}a, \textit{Factorization properties for unbounded
local positive maps}, Indag. Math., \textbf{33(}2022\textbf{),}4, 905-917.

\bibitem{KP} D.J. Karia,Y. M. Parmar, \textit{Operators on locally Hilbert
space, }J. Analysis, \textbf{23} (2015), 59--73.

\bibitem{MG} G.J. Murphy, $C^{\ast }$\textit{-algebras and operator theory, }%
Academic Press, Inc, 1990.

\bibitem{P} N. C. Phillips, \textit{Inverse limits of }$C^{\ast }$\textit{%
-algebras}, J. Operator Theory, \textbf{19}(1988),1,159--195.

\bibitem{S} K. Schm\"{u}dgen, \textit{{\"{U}}ber }$\mathit{LMC}^{\ast }$%
\textit{-Algebren}, Math. Nachr. \textbf{68} (1975), 167--182.

\bibitem{V} D. Voiculescu, \textit{Dual algebraic structures on operator
algebras related to free products}, J. Operator Theory, \textbf{17}%
(1987),1,85--98.
\end{thebibliography}
\end{document}